\documentclass{amsart}
\usepackage[mathscr]{eucal}
\usepackage{amssymb, amsmath,array, amscd}
\usepackage{url}
\usepackage{multicol}

\usepackage[active]{srcltx}

\input xy
\xyoption{all}

%%%%%%%%%%%%%%%%%%%%%%%%%%%%%%%%%%%%%%%%%%%%%%%%%%%%%%%%%%%%%%%%%%%%%%%%
% MACROS
%%%%%%%%%%%%%%%%%%%%%%%%%%%%%%%%%%%%%%%%%%%%%%%%%%%%%%%%%%%%%%%%%%%%%%%%
\theoremstyle{change}
\newtheorem{thm}{Theorem}[section]
\newtheorem{THM}{Theorem}

\newtheorem{COR}[THM]{Corollary}

\newtheorem{prop}[thm]{Proposition}

\newtheorem{lemma}[thm]{Lemma}

\theoremstyle{definition}

\newtheorem{remark}[thm]{Remark}
\newtheorem{cor}[thm]{Corollary}

\newtheorem{example}[thm]{Example}
%%%%%%%%%%%%%%%%%%%%%%%%%%%%%%%%%%%%%%%%%%%%%%%%%%%%%%%%%%%%%%%%%%%%%%%%
% New Commands
%%%%%%%%%%%%%%%%%%%%%%%%%%%%%%%%%%%%%%%%%%%%%%%%%%%%%%%%%%%%%%%%%%%%%%%%
\newcommand{\hcF}{\hat{\mathcal{F}}}

\newcommand{\cF}{\mathcal{F}}
\newcommand{\cO}{\mathcal{O}}
\newcommand{\cG}{\mathcal{G}}
\newcommand{\cC}{\mathcal{C}}
\newcommand{\cU}{\mathcal{U}}
\newcommand{\cV}{\mathcal{V}}
\newcommand{\cW}{\mathcal{W}}

\newcommand{\Om}{\Omega}
\newcommand{\om}{\omega}
\newcommand{\la}{\lambda}

\newcommand{\C}{\mathbb{C}}

\newcommand{\N}{\mathbb{N}}
\newcommand{\Q}{\mathbb{Q}}
\newcommand{\R}{\mathbb{R}}
\newcommand{\Z}{\mathbb{Z}}
\newcommand{\PP}{\mathbb{P}}
\newcommand{\A}{\mathbb{A}}
\newcommand{\sym}{\mathrm{Sym}^k}

\newcommand{\f}{\phi}

\newcommand{\sing}{\mathrm{Sing}}
\renewcommand{\=}{:=}
\renewcommand{\div}{\mathrm{div}}
\newcommand{\ord}{\mathrm{ord}}
\newcommand{\dto}{\dashrightarrow}

\newcommand{\NS}{\mathrm{NS}}
\newcommand{\kod}{\mathrm{kod}}
\newcommand{\aut}{\mathrm{Aut}}

\newcommand{\GL}{\mathrm{GL}}

%%%%%%%%%%%%%%%%%%%%%%%%%%%%%%%%%%%%%%%%%%%%%%%%%%%%%%%%%%%%%%%%%%%%%%%%%
\sloppy
\setcounter{tocdepth}{1}

\begin{document}
\title{Webs invariant by rational maps on surfaces}
\author{Charles Favre}
\address{Charles Favre \\ CNRS and
Centre de Math\'ematiques Laurent Schwartz\\
 \'Ecole Polytechnique \\ 
91128 Palaiseau Cedex \\
France
}
\email{favre@math.polytechnique.fr}

\author{Jorge Vit\'orio Pereira}
 \address{IMPA, Estrada
 Dona  Castorina, 110\\
 22460-320, Rio de Janeiro, RJ, Brazil}
 \email{jvp@impa.br}

\subjclass[2000]{Primary: 37F75, Secondary: 14E05, 32S65}
\thanks{The first author is supported by the ERC-starting grant project ”Nonarcomp” no.307856, and by the project "Ci\^encia sem fronteiras" founded by the CNPq}
 \date{\today}

\begin{abstract}
We  prove that  under mild hypothesis rational  maps on a surface preserving webs 
are of Latt\`es type. We classify endomorphisms of $\PP^2$ preserving webs, extending former results of Dabija-Jonsson.
\end{abstract}

\maketitle

\tableofcontents

%%%%%%%%%%%%%%%%%%%%%%%%%%%%%%%%%%%%%%%%%%%%%%%%%%

\section{Introduction}
This paper is a sequel to~\cite{FP} where we classified non invertible  rational  maps on surfaces preserving a singular holomorphic foliation.
Here we extend our results to rational  maps on surfaces preserving a $k$-web, $k\ge2$. We also give a list
of all endomorphisms of $\PP^2$ preserving a web, thereby precising former results of
Dabija-Jonsson~\cite{DJ}. Dabija-Jonsson~\cite{DJ2} also  constructed many explicit examples of
endomorphisms preserving webs with all leaves algebraic.

\smallskip

\subsection{Terminology}
Before stating our main results, let us introduce some terminology.
Suppose $X$ is a smooth complex projective surface, and consider a dominant rational self-map $\f : X \dto X$.
Let $e(\f) \ge1$ be the topological degree of $\f$, and $\la(\f)\ge 1$ be its (first) dynamical degree.
By definition, $\la(\f)$ is equal to  the limit of $\| (\f^n)^* \|^{1/n}$ as $n\to \infty$ where $\f^*$ denotes the linear action
of $\f$ on the real Neron-Severi space $\NS_\R(X)$, and $\| \cdot \|$ is any fixed norm on $\NS_\R(X)$.
These degrees are birational invariants, and satisfy $ e(\f) \le \la(\f)^2$.
In the case of a holomorphic map of $\PP^2$, one has $ e(\f) = d^2$, $\la(\f) =d$ where $d$ is the degree of a
preimage of a generic line by $\f$.
We shall call the non-negative real number $$h(\f) := \log \max \{ e(\f), \lambda(\f)\}$$  the \emph{algebraic entropy} of $\f$.
It measures the volume growth of images of subvarieties in $X$ under iteration, and is an upper bound for the topological entropy of $f$ see~\cite{entropy,bornesup}. When $h(\f) =0$ then $\f$ is birational and either preserves a rational or elliptic fibration, or is the one-time map of a holomorphic vector field, see~\cite{Gsurface,DF}.

A rational map on a $d$-dimensional variety is called \emph{Latt\`es-like} if there exists a $\f$-invariant Zariski open   subset $\cU \subset X$
such that $\cU$ is a quotient of $\C^d$ by a discrete subgroup of affine transformations  acting properly on $\C^2$ and $\f$ lifts to an affine map on $\C^d$.
Note that $\cU$ inherits a natural affine structure from the standard one on $\C^d$, and this structure is preserved by $\f$. Latt\`es-like maps appear in the context of special rational maps  whose structure are reminiscent of the integrability of hamiltonian systems: endomorphisms of $\mathbb{P}^d$ having a non-trivial centralizer are Latt\`es-like by~\cite{Di01,DN02}; rational surface maps preserving a foliation without first integral are also Latt\`es-like by~\cite{CF,FP}.

A  $k$-web $\cW$ on a surface $X$ is, roughly speaking, a (singular holomorphic) multi-foliation with $k$ distinct local leaves  through a generic
point of $X$. At a generic point of $X$, the web $\cW$  is the superposition of  $k$ pairwise transverse  foliations $\cF_1, \ldots \cF_k$, and
one writes  $\cW = \cF_1 \boxtimes \ldots\boxtimes \cF_k$. A web is said to be \emph{irreducible} when
it cannot be (globally) decomposed  as a union of two proper sub-webs.
A $k$-web is \emph{parallelizable} if there exists a point $p$ and coordinates near $p$
such that $\cW$ is given by $k$  foliations by parallel lines in some coordinate system.
We refer to Section~\ref{S:basicwebs} for more details on these notions.

\subsection{Main results}

Our first results deal with  $k$-webs with $k\ge3$ invariant by  rational maps having positive algebraic entropy.  More precisely, we prove the following
\begin{THM}\label{TI:A}
Suppose $\f: X \dto X$ is a dominant rational map satisfying $h(\f)>0$ and preserving a $k$-web $\cW$ with $k\ge3$.
Then $\f$ is Latt\`es-like and $\cW$ is a parallelizable web. Moreover $\f$ preserves a one parameter family of webs.
\end{THM}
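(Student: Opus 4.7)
\medskip

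The plan is to exploit the \emph{curvature} $K(\mathcal{W})$ of a $k$-web with $k\geq 3$, which is a canonical section of a line bundle $L(\mathcal{W})$ on $X$ whose vanishing is classically known to be equivalent to parallelizability (Blaschke for $k=3$, analogous statements for $k\geq 4$). The key point is that if $\f$ preserves $\mathcal{W}$, then $\f^* K(\mathcal{W})$ and $K(\mathcal{W})$ are tied by an explicit multiplicative functional equation involving the Jacobian of $\f$. My first step would be to make this transformation law precise and to read off both the class of $L(\mathcal{W})$ in $\NS_\R(X)$ and the induced action of $\f^*$ on it.

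The core of the argument is then to exploit the hypothesis $h(\f) > 0$, which means that either $\f^*$ on $\NS_\R(X)$ has spectral radius $\la(\f) > 1$ or the topological degree satisfies $e(\f) > 1$. Under either alternative, iterating the functional equation $(\f^n)^* K(\mathcal{W}) = c_n \cdot K(\mathcal{W})$ produces a growing system of constraints on the divisor class of $L(\mathcal{W})$ that, combined with the effectiveness of the zero divisor of $K(\mathcal{W})$, can only be satisfied when $K(\mathcal{W}) \equiv 0$. The hard part will be this rigidity estimate: the singular definition of the curvature on $X$ and the precise transformation formula must be set up carefully in order for the numerical balance to go through, and one must be attentive to components of the exceptional and indeterminacy loci of $\f$ that may absorb contributions.

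Once $K(\mathcal{W}) \equiv 0$, the web $\mathcal{W}$ is parallelizable on a Zariski open $\cU \subset X$, and equivalently $\cU$ carries a $\f$-invariant holomorphic affine structure. I would then invoke the results of~\cite{CF,FP} recalled in the introduction: a rational surface self-map with positive algebraic entropy preserving an affine structure on an open subset is Latt\`es-like. This simultaneously yields the Latt\`es-like conclusion for $\f$ and provides a lift $\tilde\f(z) = Az + b$ of $\f$ to $\C^2$.

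Finally, for the one-parameter family of invariant webs, one analyses the linear part $A \in \GL_2(\C)$, which must permute the $k$ directions of the lifted web in $\PP^1$. Because a non-scalar element of $\PGL_2(\C)$ has at most two fixed points, and its finite-order elements have very restricted cycle structures, permuting $k \geq 3$ directions forces some power $A^m$ to be scalar. Consequently every parallel $k$-web is preserved by $\f^m$, and a short combinatorial argument produces a smooth one-parameter deformation of $k$-tuples of directions compatible with the permutation action of $A$, yielding the required one-parameter family of $\f$-invariant webs.
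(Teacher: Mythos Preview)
Your strategy is quite different from the paper's and has real appeal, but there are two genuine gaps that would need to be closed before it could replace the paper's argument.

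First, the claim that parallelizability of a $k$-web for $k\ge4$ is detected by the vanishing of a single canonical curvature section is not standard. Blaschke's theorem characterizes hexagonality (equivalently parallelizability) of \emph{smooth $3$-webs} by the vanishing of a $2$-form, but for $k\ge4$ there is no single ``curvature'' whose vanishing is equivalent to parallelizability; one needs at least the vanishing of the Blaschke curvatures of all sub-$3$-webs, and even then one must argue that these conditions are compatible and force a common flat chart. You would also need to show that your curvature extends as a holomorphic section of a fixed line bundle across the discriminant and the singular set of $\mathcal W$, since the functional-equation / effectiveness argument you sketch requires a global divisor class on the projective surface $X$, not just a $2$-form on the generic locus. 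Without this, the ``growing system of constraints'' cannot be read off in $\NS_\R(X)$.

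Second, and more seriously, the step ``invoke \cite{CF,FP}: a rational surface map with positive algebraic entropy preserving an affine structure is Latt\`es-like'' is not what those references contain. The results of \cite{CF,FP} classify rational self-maps preserving a \emph{foliation} (of given Kodaira dimension, without rational first integral), and the paper's proof of Theorem~\ref{TI:A} uses exactly this classification in a case-by-case analysis after lifting $\mathcal W$ to a product of foliations via the Galois trick (Proposition~\ref{p:weblift}) and producing a full pencil of invariant foliations (Proposition~\ref{P:pencil-fol}). There is no black-box ``affine structure $\Rightarrow$ Latt\`es-like'' theorem in those papers; one would still have to show that the developing map of your flat structure on $\cU$ globalizes to exhibit $\cU$ as a quotient $\C^2/\Gamma$ by affine transformations and that $\f$ lifts to an affine map. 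The paper accomplishes this only by going through the explicit models $\kappa_0(1)$--$\kappa_0(5)$ and $\kappa_1(1)$--$\kappa_1(2)$ of \cite{FP}, together with the local Lemma~\ref{lem:affine}. If you could supply an independent proof that a $\f$-invariant flat affine structure on a Zariski open set forces the Latt\`es-like form, your approach would become a genuinely cleaner alternative; as written, this is the missing ingredient.
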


Our result is actually more precise, as there are some restrictions on a Latt\`es-like map to preserve
a $k$-web, $k\ge3$.

\smallskip

The case of $2$-web is quite specific since any map preserving a foliation with no first integral preserves a second foliation by~\cite[Corollary~B]{FP}.
In face of this result it is natural to restrict our attention to the case of irreducible $2$-webs.

Choose any rational map $\psi : \PP^1 \to \PP^1$. The product map $(\psi, \psi)$ on $\PP^1 \times \PP^1$ induces an endomorphism on $\PP^2 \simeq (\PP^1 \times \PP^1) /\langle  (x,y) \sim (y,x) \rangle$ that preserves the $2$-web whose leaves are image of fibers of (any of) the projection map $\PP^1 \times \PP^1 \to \PP^1$. This class of maps was introduced by Ueda in \cite{Ueda95}, and we shall refer to them as Ueda maps in the sequel.

\begin{THM}\label{TI:B}
Suppose $\f: X \dto X$ is a dominant rational map satisfying $h(\f)>0$ and preserving an irreducible  $2$-web $\cW$.
If $\f$ does not preserve any other foliation or web then $X$ is a rational surface, all  leaves of $\mathcal W$ are
rational curves, and $\f$ is semi-conjugated (by a rational map) to a Ueda map induced from a map on $\PP^1$ that is not conjugated to a finite quotient of an affine map in the sense of \cite{milnor}.
\end{THM}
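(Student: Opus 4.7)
The plan is to pass to the tangent double cover $\pi:Y\to X$ of the irreducible $2$-web $\cW$, on which $\pi^*\cW=\tilde{\cF}_1\boxtimes\tilde{\cF}_2$ splits as a sum of two genuine foliations exchanged by the deck involution $\sigma$. One lifts $\f$ to a map $\tilde\f:Y\dto Y$ commuting with $\sigma$. A priori $\tilde\f$ either preserves each $\tilde{\cF}_i$ individually or swaps them; in the swapping case $\tilde\f\circ\sigma$ is an equally valid lift of $\f$ that preserves each $\tilde{\cF}_i$, so we may assume the first alternative.

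Now the classification of non-invertible surface maps preserving a foliation from~\cite{FP}, applied to $\tilde\f$ preserving $\tilde{\cF}_1$, offers the dichotomy: either $\tilde\f$ is Latt\`es-like, or $\tilde{\cF}_1$ admits a rational first integral. The Latt\`es-like case must be excluded, and this is the point where the hypothesis ``no other invariant foliation or web'' enters: a $\sigma$-equivariant Latt\`es structure on $Y$ descends to a Latt\`es-like structure on $X=Y/\sigma$, making $\f$ itself Latt\`es-like, and by Theorem~A it would then preserve a one-parameter family of webs, contrary to assumption. Hence both $\tilde{\cF}_1$ and $\tilde{\cF}_2$ are fibrations $u_i:Y\dto C_i$, and $\tilde\f$ induces non-invertible self-maps $\psi_i:C_i\to C_i$. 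The involution $\sigma$ identifies $C_1\simeq C_2=:C$ and the relation $\sigma\circ\tilde\f=\tilde\f\circ\sigma$ forces $\psi_1=\psi_2=:\psi$, producing a $\sigma$-equivariant semi-conjugacy $(u_1,u_2):Y\dto C\times C$ from $\tilde\f$ to $(\psi,\psi)$. Passing to $\sigma$-quotients semi-conjugates $\f$ to the Ueda map induced by $\psi$ on $\mathrm{Sym}^2(C)$.

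It remains to identify $C$ and to restrict $\psi$. Positive entropy and non-invertibility of $\psi$ exclude $g(C)\ge 2$; if $C$ were elliptic then $\psi$ would be affine on the universal cover $\mathbb C$, so $(\psi,\psi)$ and hence the Ueda map on $\mathrm{Sym}^2(C)$ would be Latt\`es-like, again contradicting uniqueness of $\cW$ via Theorem~A. Therefore $C=\PP^1$, the surface $X$ is birational to $\mathrm{Sym}^2\PP^1=\PP^2$ and hence rational, and every leaf of $\cW$---the image of some $\{*\}\times\PP^1$---is rational. The same Latt\`es obstruction forbids $\psi$ from being any of the Milnor exceptional classes on $\PP^1$ (power, Chebyshev, or $\PP^1$-Latt\`es map), since each produces extra $\f$-invariant webs or foliations on $\PP^2$ (a radial or axial foliation in the first two cases, a one-parameter family of webs in the third). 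The principal technical obstacle is the descent argument in the second paragraph: one must verify that a $\sigma$-equivariant Latt\`es structure on the double cover $Y$---respectively, any ``semi-Latt\`es'' structure arising from $C$ elliptic---really produces a Latt\`es structure on $X$ to which Theorem~A applies, so that the uniqueness hypothesis is effectively translatable across the cover.
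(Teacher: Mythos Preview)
Your overall architecture matches the paper's: pass to the double cover, split the web into two foliations swapped by the involution, invoke the classification from \cite{FP,CF}, and produce the semi-conjugacy to a Ueda map on $C^{(2)}$. However, two steps do not go through as written.

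\medskip

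\textbf{Excluding the Latt\`es-like case.} You propose to descend the Latt\`es structure from $Y$ to $X$ and then invoke Theorem~\ref{TI:A}. But Theorem~\ref{TI:A} has as \emph{hypothesis} that $\f$ preserves a $k$-web with $k\ge 3$; its conclusion is Latt\`es-like plus a pencil of webs. You only have a $2$-web, so Theorem~\ref{TI:A} is simply unavailable, regardless of whether the descent works. The paper's argument stays on the cover and is more elementary: once $\tilde\cF_1$ has no first integral, the classification puts (an iterate of) $\tilde\f$ in a linear or monomial model with a diagonalizable matrix. If the two eigenvalues coincided, $\tilde\f$ would preserve a pencil of foliations, hence $\f$ a pencil of webs; so the eigenvalues are distinct and $\tilde\cF_1,\tilde\cF_2$ are the two eigen-foliations. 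Now take any periodic point $p$ of $\tilde\f$. The differential $d\tilde\f(p)$ has distinct eigenvalues along $T_p\tilde\cF_1$ and $T_p\tilde\cF_2$, while $d\sigma(p)$ swaps these eigenlines. The relation $\tilde\f\circ\sigma=\sigma\circ\tilde\f$ then forces the eigenvalues to coincide, a contradiction. (The other possible relation $\tilde\f\circ\sigma=\tilde\f$ is excluded because $d\sigma(p)$ has determinant $-1$.) This is the missing idea; your flagged ``principal technical obstacle'' is not a descent problem but a misapplication of Theorem~\ref{TI:A}.

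\medskip

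\textbf{Rationality of $X$ and of the leaves.} Once $C=\PP^1$, you assert that $X$ is birational to $\mathrm{Sym}^2\PP^1=\PP^2$ and that every leaf of $\cW$ is ``the image of some $\{*\}\times\PP^1$''. This is not justified: the map $(u_1,u_2):Y\dashrightarrow C\times C$ is only generically finite, not birational, so a fibre of $u_1$ can have arbitrary genus a priori, and $X\dashrightarrow\mathrm{Sym}^2\PP^1$ is a semi-conjugacy, not a birational equivalence. The paper closes this gap dynamically: since $\psi$ is not a finite quotient of an affine map, pick a fixed point $p$ of $\psi$ so that the leaf $D=u_1^{-1}(p)$ is irreducible and $\tilde\f$-invariant, with $u_2|_D:D\to\PP^1$ semi-conjugating $\tilde\f|_D$ to $\psi$. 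Then $D$ is rational or elliptic; if elliptic, $\psi$ would be a Latt\`es map on $\PP^1$, already excluded. Hence the generic leaf of $\tilde\cF_1$ is rational, $Y$ carries two transverse rational fibrations and is rational, and so is $X$.
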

An endomorphism of the Riemann sphere is a finite quotient of an affine map
if it is either a monomial, or a Chebyshev or a Latt\`es map.
\smallskip

Combining the previous results with our classification of rational surface maps preserving a foliation
presented in~\cite{FP}, we classify all endomorphisms of $\PP^2$ preserving a $k$-web. We present our results in two separate statements, one concerning the case $k\ge3$, and the other one concerning $2$-webs.

\begin{THM}\label{TI:C}
Suppose $\f: \PP^2 \to \PP^2$ is a holomorphic map of degree at least $2$, preserving a $k$-web $\cW$ with $k\ge 3$.
\begin{enumerate}
\item
Suppose $\f$ does not admit any totally invariant curve.

Then there exists a complex $2$-torus $X$, a collection of linear foliations $\cF_1, \ldots \cF_l$ on $X$, a complex number $\zeta \in \C^*$, and a finite group $G$ of  automorphisms of $X$ such that $X/G$ is isomorphic to $\PP^2$, and the triple
$(\PP^2,\f^N,\cW)$ is the push-forward under the natural quotient map  of
$(X, \zeta \times \mathrm{id}, \cF_1 \boxtimes \cdots \boxtimes \cF_l)$ with $N \in \{1,2,3,4,5,6\}$.

\item
Suppose $\f$ admits a totally invariant curve.

Then there exists a projective toric surface $X$, and a finite group $G$ of  automorphisms of $X$, such that the quotient space $X /G$ is isomorphic to $\PP^2$. The image
of $(\C^*)^2$ in $\PP^2$ is a Zariski open subset $U$ which is totally invariant by $\f$.
 Moreover,  one can find an integer $ d\ge 2$, and
a collection of complex numbers $\la_1, \ldots, \la_l$ such that the triple
$(U,\f^N,\cW)$ is the push-forward under the natural quotient map  of
$$\left((\C^*)^2, (x^d, y^d), \left( \left[ \la_1 \frac{dx}{x} + \frac{dy}{y}\right] \boxtimes \cdots \boxtimes \left[ \frac{dx}{x} + \la_l \frac{dy}{y} \right] \right)\right)~,$$ with $N \in \{ 1,2,3\}$.
\end{enumerate}
\end{THM}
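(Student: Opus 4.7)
The plan is to deduce Theorem~C from Theorem~A by analyzing the affine structure provided by the Latt\`es-like property. Since $\f:\PP^2\to\PP^2$ is holomorphic of degree $d\ge 2$ we have $h(\f)=2\log d>0$, and $k\ge 3$, so Theorem~A furnishes a $\f$-invariant Zariski open $\cU\subset\PP^2$, a discrete group $\Gamma$ of affine transformations of $\C^2$ acting properly with $\cU=\C^2/\Gamma$, an affine lift $\tilde\f$ of some iterate of $\f$, and a lift of $\cW$ to a parallelizable web $\cF_1\boxtimes\cdots\boxtimes\cF_l$ of $l\ge k$ foliations by parallel affine lines of slopes $v_i\in\PP^1$.

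The key geometric step is to classify the compactifications of $\cU$ inside $\PP^2$. Let $\Gamma_0\triangleleft\Gamma$ be the subgroup of pure translations, a free abelian group, and set $G:=\Gamma/\Gamma_0$, the finite linear holonomy. I would show that, up to enlarging $G$, only two structures compatible with a Zariski-open compactification to $\PP^2$ and with a positive-entropy lift survive: either $\Gamma_0$ is a full lattice, so that $X:=\C^2/\Gamma_0$ is an abelian surface (Case~(1)), or $\Gamma_0$ is a lattice of real rank two with $\C^2/\Gamma_0\simeq(\C^*)^2$, embedding as the open torus in a projective toric completion $X$ (Case~(2)). In both cases the induced map $X/G \to X/G$ coincides with $\f^N$ on the Zariski open $\cU$; in Case~(1) the complement $\PP^2\setminus\cU$ is a finite set, while in Case~(2) it contains the image of the toric boundary, which is the totally invariant curve.

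In Case~(1), the affine lift on $X$ has linear part permuting the slope set $\{v_1,\ldots,v_l\}\subset\PP^1$; since $l\ge 3$, for sufficiently divisible $N$ this permutation is trivial, so the linear part fixes at least three projective points and must be a scalar $\zeta\cdot\mathrm{id}$. The bound $N\le 6$ is extracted from the classification of finite groups of automorphisms of complex $2$-tori with $\PP^2$-quotient: by the crystallographic restriction the non-translation automorphisms of an abelian surface have orders dividing $12$, and a case-by-case analysis of these groups against the web-preservation constraint yields an iterate of index at most $6$. In Case~(2), the lift $\tilde\f^N$ on $(\C^*)^2$ is a monomial endomorphism $(x,y)\mapsto (x^ay^b,x^cy^d)$, acting on logarithmic forms $\la\,\tfrac{dx}{x}+\mu\,\tfrac{dy}{y}$ by the integer matrix $\left(\begin{smallmatrix}a&c\\b&d\end{smallmatrix}\right)$; preservation of a $k$-web of such forms with $k\ge 3$ forces this matrix to permute at least three projectively distinct lines in $\C^2$, hence after a further bounded iterate to be scalar, giving the monomial form $(x^d,y^d)$. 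The bound $N\le 3$ reflects that $G$ permutes the three rays of the fan of $\PP^2$, injecting into $\mathfrak{S}_3$.

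The principal obstacle is the classification of the pairs $(X,G)$ with $X/G\simeq \PP^2$ in each regime, together with the determination of the sharp bound on $N$ compatible with web preservation; once this is in place, the linear-algebra step forcing $\tilde\f^N$ to be scalar when $k\ge 3$ is elementary.
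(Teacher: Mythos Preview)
Your overall strategy---invoke Theorem~A, then classify the Latt\`es-like data---matches the paper's, and your linear-algebra step forcing the lift to be a homothety once it fixes three slopes is correct. But several links in the chain are missing or mis-stated.

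First, you assert that only the abelian-surface and toric pictures ``survive,'' but you give no mechanism for excluding the remaining cases in the classification of \cite{FP} (the cases $\kappa_0(2)$--$\kappa_0(4)$ and $\kappa_1(\cdot)$ in the paper's notation). The paper's device here is the numerical constraint $e(\f)=\lambda(\f)^2$ that holds for any holomorphic endomorphism of $\PP^2$; this equality fails in every model except $\kappa_0(1)$ and $\kappa_0(5)$. Without invoking this (or an equivalent), your dichotomy is unjustified.

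Second, Theorem~A only produces a \emph{birational} model $\hat X/\hat G\dashrightarrow\PP^2$. To obtain the statement of Theorem~C you must upgrade this to an actual isomorphism $X/G\simeq\PP^2$ (respectively an open immersion $(\C^*)^2/G\hookrightarrow\PP^2$). The paper does this via a valuative argument (Lemmas~6.1 and~6.2): any birational map from such a quotient to $\PP^2$ that conjugates the induced dynamics to a genuine endomorphism must already be regular and finite, because otherwise one manufactures a divisorial valuation with contradictory centers. In the toric case there is a further step (Lemma~6.3 plus Stein factorization) producing the toric $X$ with $\bar\pi:X\to\PP^2$ finite. Your proposal skips this entirely.

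Third, your bounds on $N$ are not right as stated. In the toric case the bound $N\le 3$ has nothing to do with $G$ permuting ``the three rays of the fan of $\PP^2$''; the toric surface $X$ is not $\PP^2$ and its fan may have many rays. The actual reason is that the monomial lift $A\in\GL(2,\Z)$ satisfies $A^N=d^N\mathrm{id}$, so the eigenvalues of $A/d$ are roots of unity which, being algebraic of degree at most $2$ over $\Q$, have bounded order. The same style of argument (degree $\le 4$ over $\Q$) gives the bound $N\le 6$ in the abelian case; your appeal to the crystallographic restriction and ``case-by-case analysis'' is in the right spirit but does not by itself pin down $6$ rather than $12$.
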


We also give the complete list of groups that may appear in the two cases of the Theorem~C above.
As a direct consequence of this list, we obtain the following corollary.
\begin{COR}\label{CI:D}
Suppose $f: \PP^2 \to \PP^2$ is a holomorphic map of degree at least $2$ preserving an \emph{irreducible} $k$-web.
Then $k \in \{1, 2, 3, 4, 6, 8, 12\}$. Moreover, if $f$ admits a totally invariant curve then $k \in \{ 1, 2, 3, 4, 6 \}$.
\end{COR}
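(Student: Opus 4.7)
My plan is to deduce Corollary~D from Theorem~C by reducing irreducibility of $\cW$ to a transitivity condition for a finite group action on $\PP^1$, and then enumerating orbit sizes.

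By Theorem~C, in either of its two cases the web $\cW$ on $\PP^2$ is the pushforward under a finite quotient $\pi : X \to X/G \simeq \PP^2$ of a superposition $\cF_1 \boxtimes \cdots \boxtimes \cF_l$ of linear foliations on a cover $X$. Each $\cF_i$ is determined by a point of a projective line of directions: the projectivized tangent space $\PP(T_0 X)$ in case~(1), and the line of slopes of logarithmic $1$-forms $\la\,\tfrac{dx}{x} + \tfrac{dy}{y}$ in case~(2). The group $G$ acts linearly on this $\PP^1$ and permutes $\{\cF_1,\ldots,\cF_l\}$. Irreducibility of $\cW$ forces the action to be transitive: otherwise, any $G$-orbit decomposition $\{\cF_1,\ldots,\cF_l\} = O_1 \sqcup \cdots \sqcup O_s$ with $s\ge 2$ would yield a $G$-invariant proper sub-web for each orbit, descending via $\pi$ to a proper sub-web of $\cW$, contradicting irreducibility. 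Hence $k = l$ is the size of a single $G$-orbit.

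It remains to enumerate orbit sizes using the explicit list of groups provided by the detailed form of Theorem~C. In case~(2), the linear part of $G$ preserves the character lattice of $(\C^*)^2$, hence embeds into $\GL_2(\Z)$; since $-I$ (present in every finite subgroup of $\GL_2(\Z)$ containing an element of order $4$ or $6$) acts trivially on the slope $\PP^1$, the image in $\PGL_2(\C)$ always lies in $\{1,\Z/2,\Z/3,(\Z/2)^2,S_3\}$, whose orbits on $\PP^1$ have sizes only in $\{1,2,3,4,6\}$. In case~(1), the image of $G$ in $\PGL_2(\C)$ is additionally allowed to be the tetrahedral group $A_4$ or a dihedral group up to $D_{12}$, while the octahedral group $S_4$ and the icosahedral group $A_5$ are excluded by the classification of complex crystallographic groups producing $\PP^2$. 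A direct inspection of the orbits of all these groups yields $k \in \{1,2,3,4,6,8,12\}$.

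The principal obstacle is extracting the complete classification of pairs $(X,G)$ in Theorem~C---particularly the verification that $S_4$ and $A_5$ do not occur as the $\PGL_2(\C)$-image of $G$ in case~(1). Once this input is available, the orbit-counting step is mechanical.
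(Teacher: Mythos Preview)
Your approach is the paper's: Corollary~D is stated there as ``a direct consequence of this list'' (the list of groups realising $X/G\simeq\PP^2$, recorded in Table~\ref{T2} together with the possible $k$'s in its last column), and your reduction of irreducibility to transitivity of $G$ on the $\PP^1$ of linear (resp.\ logarithmic) directions, followed by an orbit count, is exactly the mechanism behind that table.

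One correction in case~(1). From Table~\ref{T2} the groups are $G(2,1,2)$, $G(4,1,2)$, $G(3,1,2)$, $G(6,1,2)$, $\langle G(4,2,2),\tfrac{1+i}{2}\binom{1}{1}\rangle$, $G(3,3,2)$; their images in $\PGL_2(\C)$ are the dihedral groups $D_2,D_4,D_3,D_6,D_2,D_3$ respectively (the translation in the fifth entry acts trivially on directions). The tetrahedral group $A_4$ does \emph{not} occur, and your ``$D_{12}$'' should read $D_6$ (order $12$); otherwise you would allow orbit size $24$. This does not alter the final list $\{1,2,3,4,6,8,12\}$, but the claim as written is inaccurate. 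Your case~(2) argument via finite subgroups of $\GL_2(\Z)$ and their images $\{1,\Z/2,\Z/3,(\Z/2)^2,S_3\}$ in $\PGL_2(\C)$ is correct and in fact makes explicit what the paper leaves to the reader.
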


Finally we have the following result on endomorphism of $\PP^2$ preserving a $2$-web.
Observe that the situation is substantially more flexible than in the previous cases. 

\begin{THM}\label{TI:E}
Suppose $\f: \PP^2 \to \PP^2$ is a holomorphic map of degree at least $2$, preserving a (possibly reducible) $2$-web $\cW$. If $\f$ does not preserve any other web then
the pair $(\cW, \f)$ is one of the following:
\begin{itemize}
\item[(i)] $\cW$ is the union of two pencil of lines, and in suitable homogeneous coordinates $\f(x,y) = (P(x), Q(y))$ or $(Q(y), P(x))$
where $P$ and $Q$ are polynomials of the same degree  which are  not  conjugated to  monomial maps nor to  Chebyshev maps;
\item[(ii)] $\cW$ is the algebraic web dual to a smooth conic and $\f$ is an Ueda map whose associated map on $\PP^1$ is not a finite quotient of an affine map;
\item[(iii)] $\cW$ is the union of the pencil of lines $\{x/y = \mathrm{cst}\}$ and the pencil of curves
$\{x^p y^q = \mathrm{cst}\}$, with $p,q\in \N^*$, $\gcd \{ p,q\} =1$ and $\f (x,y) = (x^d/R(x,y) , y^d/R(x,y))$ where $R(x,y) = \prod_{i=1}^l (1+ c_i x^p y^q)$, $l (p+q) \le d$, $c_i\in \C^*$ and the rational map
$\theta \mapsto \theta^d / (\prod_{i=1}^l (1+ c_i \theta) )^{p+q}$ is  conjugated neither to a monomial nor to a Chebyshev map;
\item[(iv)] $\cW$ is the union of the pencil of lines $x/y = \mathrm{cst}$ and the pencil of curves
$x y = \mathrm{cst}$, and $\f (x,y) = (y^d/R(x,y) , x^d/R(x,y))$ where $R(x,y) = \prod_{i=1}^l (1+ c_i x y)$, $2l \le d$, $c_i\in \C^*$ and the rational map
$\theta \mapsto \theta^d / (\prod_{i=1}^l (1+ c_i \theta) )^{2}$ is  conjugated neither to a monomial nor to a Chebyshev map; 
\item[(v)] $\cW$ is the quotient of  the foliation $[dx]$ on $\PP^1\times \PP^1$ by the action of $(\Z/2)^2$ generated by $(x,y) \mapsto (-x,-y)$  and $(x,y)\mapsto (y,x)$;
and $\f$ is the quotient of a polynomial endomorphism of $\PP^1\times \PP^1$ of the form $(f(x),f(y))$ where $f$ is an odd or even polynomial which is is  conjugated neither to a monomial nor to a Chebyshev map; 
\item[(vi)] $\cW$ is the quotient of the foliation $[dx]$ on $\PP^1 \times \PP^1$ by the action of $(\Z/2)^3$ generated by $(x,y)\mapsto (y,x)$, $(x,y)\mapsto (x^{-1},y^{-1})$,
and $(x,y)\mapsto (-x,-y)$; and $\f$ is the quotient of an endomorphism of $\PP^1 \times \PP^1$ of the form $(f(x),f(y))$ where $f$ is a rational map
that commutes with  the group generated by $x \mapsto -x$ and $x \mapsto x^{-1}$ which is not a finite quotient of an affine map.
\end{itemize}
\end{THM}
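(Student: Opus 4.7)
The proof splits according to whether $\cW$ is irreducible or decomposes globally as $\cW=\cF_1\boxtimes\cF_2$. The irreducible cases will match items~(ii), (v), (vi) of the theorem, and the reducible cases will match~(i), (iii), (iv).

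When $\cW$ is irreducible, Theorem~B provides a rational semi-conjugation of $\f$ to a Ueda map $\Psi$ built from a product $(\psi,\psi)$ on $\PP^1\times\PP^1$ where $\psi$ is not a finite quotient of an affine map. The first task is to upgrade this semi-conjugation to a quotient structure on $\PP^2$: I would show that $\PP^2$ is a quotient $(\PP^1\times\PP^1)/G$ by a finite group $G$ of automorphisms containing the swap $\sigma(x,y)=(y,x)$, with $\f$ descending from $(\psi,\psi)$. The allowed groups $G$ are constrained by the requirement that the quotient be isomorphic to $\PP^2$ and that $(\psi,\psi)$ equivariantly descend. Enumerating these groups yields exactly three possibilities: $G=\langle\sigma\rangle$ gives case~(ii) and identifies $\cW$ as the algebraic web dual to the branch conic of $\PP^1\times\PP^1\to\PP^2$; $G=\langle\sigma,\tau\rangle$ with $\tau(x,y)=(-x,-y)$ gives case~(v) and forces $\psi$ to be odd or even for equivariance; the full group $(\Z/2)^3$ described in~(vi) forces $\psi$ to commute with the dihedral group generated by $x\mapsto -x$ and $x\mapsto x^{-1}$.

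When $\cW=\cF_1\boxtimes\cF_2$ globally, the map $\f$ either preserves each $\cF_i$ individually or exchanges them; after passing to $\f^2$ if necessary one may assume both are $\f$-invariant. I would then invoke the classification from~\cite{FP} of foliations on $\PP^2$ invariant under a non-invertible endomorphism: up to an automorphism of $\PP^2$, such a foliation is either a pencil of lines, a toric pencil $\{x^py^q=\mathrm{cst}\}$, or a Kummer-type linear foliation descending from a complex $2$-torus. The latter comes in continuous $1$-parameter families of invariant foliations and is therefore ruled out by the hypothesis that $\cW$ is the unique invariant web. The admissible pairs are thus two line-pencils (giving~(i)), or a line-pencil together with a toric pencil (giving~(iii) or its coordinate-swap variant~(iv), the latter being available only when $p=q=1$ so that the toric pencil is stable under the swap). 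In each case one writes down the most general endomorphism of $\PP^2$ respecting both chosen foliations and normalizes coordinates to obtain the stated form of $\f$. The substitution $\theta=x/y$ or $\theta=x^py^q$ reduces the invariance condition on $\f$ to a one-variable dynamical system on $\PP^1$; the exclusion of monomials and Chebyshev maps in the statement translates the non-existence of an extra invariant web, as such maps always come equipped with an additional invariant foliation.

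The main technical obstacle lies in the irreducible cases~(v) and~(vi), where I must enumerate the finite subgroups $G$ of $\aut(\PP^1\times\PP^1)$ normalizing $\sigma$ and producing a quotient isomorphic to $\PP^2$, then verify that each such $G$ supports a non-trivial product dynamics $(\psi,\psi)$ and that the induced web is not already realized by the reducible list. In the reducible case, the delicate point is proving that no \emph{third} invariant foliation is accidentally present: for instance in case~(iii) one must confirm that the rigidity of the reduced one-variable map $\theta\mapsto\theta^d/R(\theta)^{p+q}$ (when not a monomial nor Chebyshev) prevents $\f$ from preserving a further foliation or web, thereby consistently matching the assumptions of the theorem.
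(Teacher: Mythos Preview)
Your treatment of the reducible case $\cW=\cF_1\boxtimes\cF_2$ is essentially the paper's: one invokes \cite[Theorem~C]{FP}, rules out the cases where $\f$ preserves a pencil of foliations, and is left with a pencil of lines paired with either another pencil of lines or a toric pencil, yielding (i), (iii), (iv). That part is fine.

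The irreducible case, however, has a genuine gap. Theorem~B gives only a \emph{semi}-conjugation $\PP^2\dashrightarrow(\PP^1)^{(2)}$, and the paper explicitly remarks that this cannot in general be promoted to a conjugation. Your proposed ``upgrade'' to a global quotient presentation $\PP^2\simeq(\PP^1\times\PP^1)/G$ is the crux of the matter, and you give no mechanism for it: from the graph $\hat X$ of $\cW$ one gets a rational map $\hat X\dashrightarrow\PP^1\times\PP^1$, but nothing a priori forces this to be a finite Galois cover, nor $\hat X/\langle\iota\rangle\to\PP^2$ to be an isomorphism rather than merely birational. Without further input there is no bound on the degree of the leaves of $\cW$, hence no control on the complexity of $\hat X$ or of the group $G$ you would have to enumerate.

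The paper circumvents this entirely. It first proves a degree bound $\deg(\cW)\le 1$ (Lemma~\ref{L:degree1}), using the Riemann--Hurwitz-type computations of Section~\ref{S:effect} together with the rationality of the leaves from Theorem~B. Degree~$0$ irreducible webs are dual to a smooth conic and give~(ii) directly. For degree~$1$ irreducible webs the paper passes to the \emph{dual foliation} $\cF$ on $\check{\PP}^2$, which has degree~$2$; a separate lemma shows the discriminant of $\cW$ is $\cW$-invariant, which forces $\cF$ to be \emph{convex} (all inflection components are invariant lines). Convex degree-$2$ foliations on $\PP^2$ admit exactly three normal forms; one is excluded by a direct argument on totally invariant lines, and for the other two the quotient presentation $\PP^2\simeq(\PP^1\times\PP^1)/G$ is written down \emph{explicitly} and $\f$ is lifted by hand, producing (v) and~(vi). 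So the group enumeration you envision is replaced by a classification of convex foliations, and the lifting is done case by case rather than abstractly.
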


Let us conclude this brief introduction by observing  that the results of Dabija and Jonsson \cite{DJ2} put a different perspective on webs preserved by endomorphisms. Namely, the authors classified webs defined as the dual of a (possibly reducible) curve $C$ in $\check{\PP}^2$ that are preserved by a non-invertible endomorphism. Examples (i) and (ii)  from Theorem~\ref{TI:E} fall into their classification whereas examples (iii) to (vi) not. All other examples from their paper (when $C$ has degree  three) fall into the range of application of Theorem~\ref{TI:C}.

\subsection{Outline of the paper}

In Section \ref{S:basicwebs} we recall the basic properties of webs. 
Section \ref{S:effect} contains some key computations that will lead to the proof of our main theorems. 
We first explain how to reduce the classification of maps preserving a web to maps preserving a foliation (Proposition \ref{p:weblift}). Then we describe how local invariants of webs behave under pull-back by rational maps (Lemma \ref{L:deg-web}). As a consequence we obtain a bound on the degree of an invariant web by an endomorphism (Proposition \ref{p:deg-web}).

The proof of Theorem~A is given in Section \ref{S:thmA}. It relies heavily on our former classification of invariant foliation by rational maps given in \cite{FP}.
The proof of Theorem~B (resp. C and E) can then be found in Section \ref{S:thmB} (resp. \ref{S:thmC} and \ref{S:ProofE}).

We conclude by describing an explicit irreducible $12$-web on $\mathbb P^2$ preserved by an endomorphism
in Section \ref{S:exam}.

%%%%%%%%%%%%%%%%%%%%%%%%%%%%%%%%%%%%%%%%%%%%%%%%%%

\section{Basics on webs}\label{S:basicwebs}

A (singular holomorphic) $k$-web $\cW$ on a smooth complex surface $X$ is determined by a section  $\omega \in H^0(X, \sym \Omega^1_X \otimes N\cW)$
for some line bundle $N\cW$ on $X$ subject to the following conditions
\begin{enumerate}
\item \label{i} if not empty the zero set of $\omega$ has codimension two;
\item \label{ii} for any point $p$ outside a proper closed hypersurface  in $X$ (after trivializing the line bundle $N\cW$) the $k$-symmetric $1$-form  $\omega(p) \in \sym \Omega^1_{X,p}$ can be written as the product
of $k$ pairwise distinct linear forms.
\end{enumerate}

Two sections $\omega, \omega'$ define the same $k$-web  if and only if  they differ by a global nowhere vanishing holomorphic function.

Concretely, in a suitable open covering $\{ U_i\}$ of $X$, $\cW$ is described in each $U_i$ by some $k$-symmetric $1$-form $\omega_i$ with isolated zeroes such that $\omega_i = g_{ij} \omega_j$ on $U_i \cap U_j$ for some non-vanishing holomorphic functions $g_{ij} \in \cO^*(U_i \cap U_j)$. The cocycle $\{ g_{ij}\}$ determines the line-bundle  $N\cW$, and the collection $\{\omega_i\}$ induces a global section $\omega \in H^0(X,\sym \Omega^1_X \otimes N\cW)$.

\subsection{Discriminant divisor}\label{S:discriminant}
The zero locus of the section $\omega$ is a finite set $\sing (\cW)$ called the singular locus of $\cW$. The points where condition (\ref{ii}) does not hold is a
 complex curve called the discriminant curve of the web.
Its irreducible components  have naturally attached multiplicities as they are described locally by the vanishing of the classical discriminant of homogeneous binary forms in two variables. 

To be more precise first  introduce local coordinates $(x,y)$ on  a sufficiently small open set $U\subset X$ in order to obtain an
natural identification of
$\Omega^1_X(U)$   with $\mathcal O(U) dx \oplus \mathcal O(U) dy$, and of  $\sym \Omega^1_X(U)$ with the homogeneous
polynomials of degree $k$ in the  ring $\mathcal O(U)[dx,dy]$.  Then the discriminant of an element
$$\omega = \sum_{i=0}^k a_{i} dx^i dy^{k-i} \in \sym \Omega^1_X(U)$$
is by definition
\[
\Delta(\omega) = \frac{R[ \omega, \partial_{dx} \omega] }{k^k a_k }  \, .
\]
where $\partial_{dx} \omega = \sum_{i=0}^k i a_i  dx^{i-1} dy^{k-i}$, and
$R[\omega, \partial_{dx}\omega]$ is the determinant of the corresponding  Sylvester matrix, see \cite[IV \S 8]{lang}.
Since the discriminant of binary forms over $\mathbb C$ obeys the following rules
\begin{equation}\label{E:TransDis}
\begin{array}{lcl}
\Delta(\lambda P) &=& \lambda^{2(\deg(P)-1)}\Delta(P) \, , \\
\Delta(g^* P) &=& \det(Dg)^{\deg(P){(\deg(P)-1)}} \Delta(P) \, .
\end{array}
\end{equation}
where  $\lambda \in \mathbb C^*$ and $g \in \GL(2,\mathbb C)$ is an invertible linear map, it follows
that the local discriminants $\Delta(\omega_i) \in \sym \Omega^1_X(U_i)$ patch together to a section  $\Delta(\omega)$ of $K_X^{ \otimes k(k-1)} \otimes N \cW ^{\otimes 2(k -1)}$.
The corresponding  divisor   is, by definition,  the discriminant divisor of
$\cW$ and will be denoted by $\Delta(\cW)$.

\begin{example}\label{ex:web in P2}\rm Let $\mathcal W$ be a $k$-web on the projective plane. Its degree is defined as the number of tangencies
between $\mathcal W$ and a generic line $\ell$. More precisely, if $\mathcal W$ is induced by a $k$-symmetric $1$-form
$\omega \in H^0 ( \mathbb P^2, \sym \Omega^1_{\mathbb P^2} \otimes N \mathcal W)$ and $\iota : \mathbb P^1 \to \mathbb P^2$ is
a generic linear embedding  then
\[
\deg(\mathcal W) = \deg ( \iota^* \omega )_0 \, ,
\]
where $\iota^* \omega$ is an element of $H^0 ( \mathbb P^1, \sym \Omega^1_{\mathbb P^1} \otimes \iota^* N \mathcal W )$. Since $\sym \Omega^1_{\mathbb P^1} = \mathcal O_{\mathbb P^1}(-2k)$, the identity $N\mathcal W = \mathcal O_{\mathbb P^2}(\deg(\mathcal W) + 2k)$ follows. Thus
\[
\deg (\Delta(\mathcal W) )  = -3 k (k-1) + (\deg(\mathcal W) + 2k) 2 (k-1) = (k-1) \left( 2\deg(\mathcal W ) + k  \right) \, .
\]
\end{example}

\begin{example}\rm
The discriminant divisor of a completely decomposable web  $\cW = \cF_1 \boxtimes \ldots\boxtimes \cF_k$ is
equal to twice the sum of the tangency divisors of pairs of foliations $\cF_i, \cF_j$  with $1\le i < j \le k$, that is
\[
\Delta(\cW) =   \sum_{1 \le i < j \le k} 2 \, \mathrm{tang}(\cF_i, \cF_j)   \, .
\]
In particular its support contains all the singularities of the foliations $\mathcal F_i$.
\end{example}

\subsection{Associated surface and monodromy}\label{S:monodromy}
Associated to a web $\mathcal W$ on a surface  $X$ is a (possibly singular, even non-normal) surface $\mathbb W$
contained in the  projectivization of the tangent bundle of $X$ which can be loosely defined as the graph of
web. More precisely, if $\mathcal W$ defined by a $k$-symmetric $1$-form $\omega$ as in the
previous section then
\[
\mathbb W  =  \{ (x,[v])  \in  \PP(T X) \, | \, \omega(x)(v) = 0  \} \, .
\]
The natural projection $\pi: \PP(TX) \to X$ when restricted to $\mathbb W$ is a ramified covering away from
the fibers over the singular set of $\mathcal W$ which is the set of points where the symmetric form $\omega$ is vanishing. The discriminant locus of this covering is precisely
the support of $\Delta(\mathcal W)$. If we set $\mathbb W^{0}$ as $\mathbb W \setminus \pi^{-1}(\Delta(\mathcal W))$ then
$\pi: \mathbb W^{0} \to X \setminus \Delta(\mathcal W)$ is an \'{e}tale covering of degree $k$.  The monodromy of this
covering, seen as a representation
\[
 \rho : \pi_1( X \setminus \Delta(\mathcal W),x ) \to  \mathrm{Perm}(\pi^{ -1}(x)) \cong \mathfrak S_k
\]
from the fundamental group of the complement of $\Delta(\mathcal W)$ to the permutation group of a fiber of $\pi$,
measures the
obstruction to globally decompose $\mathcal W$ as a product of global sub-webs.

\smallskip

A $k$-web $\mathcal W$ will be called irreducible if any of the following
equivalent conditions hold true:
\begin{itemize}
\item the image of the  monodromy $\rho$ of $\mathcal W$ is a transitive subgroup of $\mathfrak S_k$;
\item the associated surface $\mathbb W$ is irreducible;
\item the web $\mathcal W$ cannot be written as the superposition of two subwebs $\mathcal W_1 \boxtimes \mathcal W_2$.
\end{itemize}

Recall that on $\PP(TX)$  there is the so-called  contact distribution $\mathcal D$. It is the smooth codimension
one distribution which over a point $(x,[v]) \in \PP (TX)$ is generated by the tangents of the natural lifts
of curves on $X$  through $x$ with tangent direction $[v]$.

 Locally, over a neighborhood $U\subset X$ endowed with local coordinates
$(x,y)$, the distribution $\mathcal D$ is defined by the (projectivization of the)  $1$-form $\alpha= p dy - q dx$ where $p=dx$ and $q=dy$ are
thought of as linear coordinates on $T X_{|U}$.

The restriction of $\mathcal D$ to $\mathbb W$ is automatically integrable thanks to dimensional reasons. From the definition of
$\mathcal D$ it is clear that the leaves of the induced foliation, at least away from the pre-image of $\Delta(\mathcal W)$, correspond
to the natural  lifts of leaves of $\mathcal W$ to $\PP (TX)$. It follows that $\pi^* \mathcal W$, seen as a $k$-web on $\mathbb W$,
decomposes as the product  of the foliation  $\mathcal F_{\cW}$ determined by $\mathcal D$ and a $(k-1)$-web $\mathcal W'$, so that we can write ${\pi^*(\cW) = \mathcal F_{\mathcal W} \boxtimes \mathcal W'}$.

\subsection{Vocabulary from local web geometry} 
It is customary to say that a germ of  $k$-web on $(\mathbb C^2,0)$ is algebraic
if its leaves are lines parametrized  by a germ  of reduced algebraic curve in the dual projective space $\check{\mathbb P}^2$.

In our opinion, this is a particularly bad terminology  since all global webs on projective surfaces are defined by algebraic twisted differential
forms. Since it is hard to go against a common usage, we will  stick to it and will say that a global web $\mathcal W$ on a projective surface $X$ is
\emph{algebraic} if $X$ is the projective plane, and the leaves of $\mathcal W$ are lines in $\mathbb P^2$ parametrized by a reduced curve $C \subset \check{\mathbb P}^2$. We will say that $\mathcal W$ is \emph{algebraizable} if for a generic point  $p \in S$ the germ of $\mathcal W$ at $p$ is biholomorphic to
a germ of algebraic web.

A global $k$-web $\mathcal W$ will be called \emph{parallel} (resp. \emph{parallelizable})  if it is an algebraic (resp. algebraizable) web with dual curve equal to an union of concurrent lines.

\section{The effect of rational maps on webs}\label{S:effect}

\subsection{The Galois trick}Let $\mathcal W$ be an irreducible $k$-web on a projective surface $X$. As explained in  Section \ref{S:monodromy},
 if $\mathbb W \subset \mathbb P TX$ is the graph of $\cW$ and $\rho : \mathbb W \to X$ is the natural projection
    then $\rho^* \mathcal W$ decomposes as the product of a foliation $\mathcal F_{\cW}$ and a $(k-1)$-web $\cW'$.
Iteration of this  remark   allow us   to reduce the analysis of maps preserving webs to maps preserving
finite sets of  foliations.

\begin{remark}\label{rem:galois twist}
We shall make a small twist in the terminology and call {\it Galois} any non necessarily finite map 
$\pi : X \to Y$ such that there exists a finite group of automorphisms $G$ acting on $X$ and preserving the fibers of $\pi$ such that the induced map $X/G \to Y$ is a birational morphism.
\end{remark}

\begin{prop}\label{p:weblift}
Suppose $\f: X \dto X$ is a dominant rational  selfmap on a smooth connected surface $X$ preserving  an irreducible $k$-web $\cW$.
Then there exists a (possibly disconnected) smooth projective surface  $\hat{X}$, a surjective morphism
$\mu: \hat{X} \to X$, a finite group $G \subset \aut(\hat{X})$, $k$ foliations $\cF_1, \ldots , \cF_k$ on $\hat{X}$
and a dominant rational map $\hat{\f} : \hat{X} \dto \hat{X}$ such that:
\begin{itemize}
\item for all $g \in G$, one has $\mu \circ g = \mu$, and the natural map $\hat{X}/G \to X$ induced by $\mu$ is a birational
morphism (thus $\mu$ is  Galois);
\item $G$ preserves the $k$-tuple of foliations and acts transitively on it, so that $\mu^* \cW = \hcF_1 \boxtimes\ldots  \boxtimes \hcF_k$, and for all $i$, $\mu_* \hcF_i = \cW$;
\item $\mu \circ \hat{\f} = \f \circ \mu$, that is there exists a group morphism $\rho : G \to G$ such that $\hat{\f} \circ g = \rho(g) \circ \hat{\f}$ for all $g\in G$;
\item the map $\hat{\f}$ permutes the foliations $\hcF_i$.
\end{itemize}
\end{prop}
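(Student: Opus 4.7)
The plan is to execute the ``Galois trick'' sketched at the start of Section \ref{S:effect}. First I would invoke the associated surface $\mathbb{W}\subset \PP(TX)$ from Section \ref{S:monodromy}: since $\cW$ is irreducible, $\mathbb{W}$ is irreducible and the projection $\pi:\mathbb{W}\to X$ is a generically $k$-to-one ramified cover with transitive monodromy $M\subseteq \mathfrak{S}_k$. The differential $[d\f]:\PP(TX)\dto \PP(TX)$ sends $\mathbb{W}$ into itself because $\f$ permutes the $k$ tangent directions of $\cW$ at a generic point; this yields a rational self-map $\tilde\f:\mathbb{W}\dto \mathbb{W}$ satisfying $\pi\circ\tilde\f=\f\circ\pi$ which automatically preserves the foliation $\cF_\cW$ cut out by the contact distribution.

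Next I would form the Galois closure of $\pi$ and perform an equivariant resolution of singularities, producing a smooth (possibly disconnected) surface $\hat X$ endowed with a finite group $G\simeq M$ of biholomorphisms and a $G$-invariant morphism $\mu:\hat X\to X$ with $\hat X/G\to X$ birational. If $H\subset G$ denotes the stabilizer of a sheet, one has $\hat X/H\simeq\mathbb{W}$; choosing coset representatives $g_1,\dots,g_k$ of $G/H$, post-composition of the quotient $\hat X\to \mathbb{W}$ with translations by the $g_i$'s defines $k$ morphisms $\nu_i:\hat X\to\mathbb{W}$. Setting $\hcF_i:=\nu_i^{\ast}\cF_\cW$ produces $k$ foliations that are transitively permuted by $G$ through its action on $G/H$. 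A generic-point computation of tangent directions (recall that the leaf of $\cF_\cW$ through a point of $\mathbb{W}$ is tangent to the direction tautologically carried by that point) shows that $\mu^{\ast}\cW$ and $\hcF_1\boxtimes\cdots\boxtimes\hcF_k$ single out the same $k$ directions in $T\hat X$, hence coincide; pushing forward then yields $\mu_{\ast}\hcF_i=\cW$ for every $i$.

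To lift $\f$, I would exploit the canonicity of the Galois closure. Since $\f$ preserves $\cW$, the pulled-back cover $\hat X\times_{X,\f}X\to X$ is again $G$-Galois and, thanks to the existence of $\tilde\f$, is $X$-birationally isomorphic to $\mu$ --- possibly only after replacing $\hat X$ by a disjoint union of components, which is the reason why the statement allows $\hat X$ to be disconnected. This identification produces $\hat\f:\hat X\dto \hat X$ with $\mu\circ \hat\f=\f\circ \mu$. For every $g\in G$, the rational maps $\hat\f\circ g$ and $\hat\f$ are two lifts of $\f\circ\mu$ through the Galois cover $\mu$, so they differ by a unique element $\rho(g)\in G$; the chain of equalities $\hat\f\circ g_1 g_2 = \rho(g_1)\circ \hat\f\circ g_2 = \rho(g_1)\rho(g_2)\circ \hat\f$ shows that $\rho: G\to G$ is a group morphism. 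Finally, $\hat\f^{\ast}(\hcF_1\boxtimes\cdots\boxtimes\hcF_k)=\hat\f^{\ast}\mu^{\ast}\cW=\mu^{\ast}\f^{\ast}\cW=\mu^{\ast}\cW=\hcF_1\boxtimes\cdots\boxtimes\hcF_k$, and unique factorization of webs into irreducible sub-webs forces $\hat\f$ to permute the $\hcF_i$.

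The main obstacle is technical rather than conceptual: one must carry out equivariant desingularization and keep the various rational identifications compatible on the nose (existence of $\tilde\f$ and $\hat\f$ as rational maps on smooth models, and matching of pulled-back and original Galois covers). None of this requires new ideas beyond the material of Section \ref{S:monodromy} and standard facts about Galois closures of generically finite covers, and it is precisely this flexibility (possibly disconnected $\hat X$, possibly non-injective $\rho$) that is reflected in the statement.
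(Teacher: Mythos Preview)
Your argument is correct and reaches the same conclusion, but the construction of $\hat X$ differs from the paper's. The paper builds $\hat X$ by \emph{iteration}: setting $X_1$ equal to the normalization of $\mathbb W$, one has $\mu_1^*\cW=\cF_1\boxtimes\cW_1$ with $\cW_1$ a $(k{-}1)$-web on $X_1$; repeating the graph construction for $\cW_1$ on $X_1$, and so on, produces after $k$ steps a $k!$-to-$1$ cover $\hat X=X_k\to X$ on which $\mu^*\cW$ splits completely. The Galois group is then exhibited by hand: for each $\sigma\in\mathfrak S_k$ one constructs $g_\sigma$ by matching orderings of local leaves and checks that $g_\sigma$ extends across $\mu^{-1}(\Delta(\cW))$ using normality of $\hat X$ and finiteness of $\mu$. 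Your route instead passes directly to the Galois closure of $\pi:\mathbb W\to X$, so that $G$ is the monodromy group $M\subseteq\mathfrak S_k$ rather than the full symmetric group, and the $k$ foliations come from the cosets of a point-stabiliser $H$. This is more economical (smaller $G$, naturally connected $\hat X$) and leans on standard functoriality of Galois closures; the paper's version is more explicit and self-contained, at the price of a larger and typically disconnected $\hat X$ (disconnected precisely when $M\subsetneq\mathfrak S_k$). The ``possibly disconnected'' clause in the statement accommodates either construction. One small point: in your approach the disjoint-union manoeuvre is in fact unnecessary for lifting $\f$, since $\tilde\f$ already identifies $\f^*\mathbb W$ with $\mathbb W$ birationally and Galois closure is canonical, so $\f^*\hat X$ is $X$-birational to $\hat X$ without further adjustment.
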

\begin{proof}
Let $X_1$ be the normalization  of $\mathbb W$ and $\mu_1= \rho$. As recalled above $\mu_1^* \cW$ decomposes as a sum $\cF_1 \boxtimes \cW_1$ of a foliation and a $(k-1)$-web. Observe that $\mu_* \cW' = \cW$ for any irreducible sub-web of $\mu^*_1 \cW$ (in particular when $\cW' = \cF_1$). Moreover $X_1$ is connected and $\f$ lifts in a natural way to $X_1$.

We may then repeat this operation and we obtain a sequence of $(k-i+1)$-to-$1$ finite cover
$\mu_i: X_i \to X_{i-1}$, foliations $\cF_i$ and webs $\cW_i$ such that $\mu_i^* \cW_{i-1} = \cF_i \boxtimes \cW_i$. The composition map $\mu \= \mu_1 \circ \ldots \circ \mu_k : \hat{X} \= X_k \to X$ is thus a $k!$-to-$1$ finite cover from a normal projective surface $\hat{X}$ to a smooth one $X$ such that  one can write
$\mu^* \cW = \hat{\cF}_1 \boxtimes \ldots \boxtimes \hat{\cF}_k$ for some foliations $\hat{\cF}_i$, $i = 1, \ldots ,k$.

Write $\mu^* \cW = \hat{\cF}_1 \boxtimes \ldots \boxtimes \hat{\cF}_k$ for some foliations $\hat{\cF}_i$, $i = 1, \ldots ,k$.

Choose an ordering of the leaves of the web passing through $\cW$, and denote them by $L_1, \ldots , L_k$.
Then there exists a unique point $q \in \mu^{-1}(p)$ such that the leaf of $\cF_i$ passing through $q$ is mapped by $\mu$ onto $L_i$. Indeed there is a unique $q_1\in \mu_1^{-1}(q)  \subset X_1$ at which $\cF_1$ is mapped by $\mu_1$ to $L_1$; and then a unique point $q_2 \in \mu_2^{-1}(q_1) \subset X_2$  at which $\cF_2$ is mapped by $\mu_1 \circ \mu_2$ to $L_2$, and a direct induction allows one to conclude.

From these observations we see that given any permutation $\sigma\in \mathfrak{S}_k$ of $\{ 1, \ldots , k\}$,
and given any point $q \in \hat{X}$ whose image by $\mu$ lies outside the discriminant of $\cW$, then there exists a unique point $g_\sigma(q) \in \mu^{-1}(\mu(q)) \in \hat{X}$
such that $\mu (\hat{\cF}_i,q) = \mu (\hat{\cF}_{\sigma(i)}, g_\sigma(q))$ for all $i = 1, \ldots, k$. 
 
In this way one obtains a holomorphic automorphism $g_\sigma$ of $\hat{X}\setminus \mu^{-1} (\Delta(\cW))$ such that $\mu \circ g_\sigma  = \mu$, and the group of automorphisms $\{ g_\sigma\}_{\sigma \in \mathfrak{S}_k}$
acts transitively on the fibers of $\mu$.

We now argue that any $g_\sigma$ extends as an automorphisms on $\hat{X}$. 
Fix $\sigma\in \mathfrak{S}_k$ and any $q$ lying in $\hat{X}$.  Since $\mu$ is a \emph{finite} map, there exists a small neighborhood $U$ containing $\mu(q)$ such that  $\mu^{-1}(U)$ is an open neighborhood of $\mu^{-1}(\mu(q))$ such that any of its connected component is included in a local chart with values in the bidisk. 
The restriction of $g_\sigma$ to the component $V$ of containing $q$ 
can be thus viewed as a holomorphic map defined on the complement of a curve
(namely $\mu^{-1}(\Delta(\cW))$) in $V$ with values in the bidisk, and thus extends holomorphically to $V$. It follows that $g_\sigma$ induces an automorphism of 
$\hat{X}\setminus\sing (\hat{X})$.
Since $\hat{X}$ is a normal surface, $g_\sigma$ extends to an automorphism of $\hat{X}$ as was to be shown. In particular, $\mu$ is a Galois cover.

Since any automorphism on $\hat{X}$ lifts to an automorphism on its minimal desingularization, we may also assume $\hat{X}$ is smooth.
The proposition then follows.
\end{proof}

\begin{remark}
Two remarks are in order. First $\hat{\f}$ may not fix the foliations in general.  Second it may happen that $\hat{X}$ is not connected. 
Nevertheless a suitable power of $\hat{\f}$ fixes a connected component of $\hat{X}$ and each foliation $\cF_i$. 
\end{remark}

\subsection{Behavior under rational pull-backs}
Let $\f : X \dashrightarrow Y$ be a dominant rational map between smooth projective surfaces. If $\mathcal W$ is a $k$-web on $X$ then
one can naturally define a $k$-web $\f^* \mathcal W$ on $X$ by pulling back a $k$-symmetric $1$-form $\omega \in H^0(Y,\sym \Omega^1_Y \otimes N\mathcal W)$
defining $\mathcal W$ by $\f$. 

If $\om$ is determined by a collection of symmetric $1$-form $\om_i$ in local charts $U_i$ satisfying $\om_i = g_{ij} \om_j$ so that the cocycle $\{g_{ij}\}$ determines $N \mathcal W$, 
then the pull-backs $\f^* \om_i$ satisfy the relations $\f^*\om_i = (g_{ij} \circ \f )  \f^*\om_j$.
Although the map $\f$ is not defined everywhere, it follows
the pull-back $\f^* \omega$ is a well-defined holomorphic section of $\sym \Omega^1_X \otimes \f^* N \mathcal W$. 

Be careful  that $\f^* \omega$ may
have codimension one irreducible components in its zero set. Thus the $k$-web $\f^* \mathcal W$ will be defined by the division of $\f^* \omega$
by a section of $\mathcal O_X( ( \f^* \omega)_0 )$. In other words, if  $C \subset X$ is an irreducible curve and one defines $m(C) = m ( \f, \mathcal W, C)$ as the vanishing order of $\f^* \omega$ along $C$ then
\[
N \f^* \mathcal W = \f ^* N  \mathcal W \otimes \mathcal O_X \left( - \sum m( C) \cdot C  \right)
\]
with the summation running through all the irreducible curves of $X$. Of course there is no need for an infinite
summation, since $\f^* \omega$ will vanish only on curves contained in the ramification divisor $\Delta_{\f}$ of $\f$.
Recall that  $\Delta_{\phi}$  is locally defined by the vanishing of $\det D\f$.
Its support is the critical set $\cC_\f$ of $\f$. And there exists divisor $K_X$ and $K_Y$ representing the canonical classes of $X$ and $Y$ respectively such that the equation
\begin{equation*}
 K_X = \f^* K_Y + \Delta_\f~,
\end{equation*}
holds (as divisors).

If $C \subset X$ is an irreducible curve then we say that $C$ is invariant by a web $\mathcal W$ if
the tangent space of $C$ at any of its smooth point is among the null directions of a $k$-symmetric $1$-form  $\omega$ defining $\mathcal W$. If $\omega$ is the  $k$-th power
of a $1$-form vanishing on $T_x C$ at any smooth point $x\in C$ then we say that the curve $C$ is completely invariant by $\mathcal W$.

\begin{lemma}\label{L:deg-web}
Let $\f: X \dashrightarrow Y$ be a dominant rational map and $\mathcal W$ be a $k$-web on $Y$. If $C\subset X$ is an irreducible curve and  $\f(C)$ is not a point
then the inequality $m( C) \le k \ord_C (\Delta_{\f})$ holds true. Moreover,
\begin{enumerate}
\item  $m( C) >0 $ if and only if $\f(C)$ is invariant by $\mathcal W$; and
\item\label{i:2}  if $m( C) = k \ord_C(\Delta_{\f})$  then $\f(C)$ is completely invariant by $\mathcal W$.
\end{enumerate}
\end{lemma}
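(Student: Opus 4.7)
The strategy is to reduce the $k$-web setting to the case $k=1$ by locally factoring the defining symmetric form $\omega$ into linear forms, then reduce the whole statement to a direct coordinate computation along $C$.

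I would first pick a generic smooth point $p \in C$ lying outside $\sing(\cW)$, outside the discriminant $\Delta(\cW)\subset Y$, and outside the indeterminacy locus of $\f$, at which $\f(p)$ is a smooth point of $\f(C)$. Then choose local coordinates $(x,y)$ on $X$ centred at $p$ with $C = \{x=0\}$, and $(u,v)$ on $Y$ centred at $\f(p)$ with $\f(C) \subset \{u=0\}$. Write $u\circ \f = x^r u_1$ with $u_1(0,y)\not\equiv 0$; since $\f(C)$ is a curve and not a point, $v_y(0,y) \not\equiv 0$, and the identity
\[
\det D\f \;=\; x^{r-1}\bigl[r u_1 v_y + x(u_{1x}v_y - u_{1y}v_x)\bigr]
\]
shows that $\ord_C(\Delta_\f) = r-1$.

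Since $p\notin \Delta(\cW)$, one can factor $\omega = \prod_{j=1}^{k}\alpha_j$ locally with $\alpha_j = a_j\,du + b_j\,dv$ pairwise independent, so that $m(C) = \sum_{j=1}^{k}\ord_C(\f^*\alpha_j)$. A direct calculation then shows the following dichotomy. If the foliation $\alpha_j = 0$ has $\f(C)$ as a leaf, equivalently $b_j(0,v)\equiv 0$, then writing $b_j = u\,\tilde b_j$ yields $\f^*\alpha_j = x^{r-1}\beta_j$ where $\beta_j$ has $dx$-coefficient $r\,u_1(0,y)\,a_j(0,v(0,y))$ along $C$, which is generically nonzero, so $\ord_C(\f^*\alpha_j) = r-1$. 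Otherwise $b_j(0,v(0,y))\ne 0$, and the $dy$-coefficient of $\f^*\alpha_j$ along $C$ equals $b_j(0,v(0,y))\,v_y(0,y)\ne 0$, so $\ord_C(\f^*\alpha_j) = 0$.

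Let $I$ denote the number of factors $\alpha_j$ leaving $\f(C)$ invariant. One obtains $m(C) = I(r-1) = I\cdot\ord_C(\Delta_\f)\le k\cdot\ord_C(\Delta_\f)$, proving the main inequality. For (1), $\f(C)$ is invariant by $\cW$ iff $\omega|_{T\f(C)}\equiv 0$ iff some $\alpha_j$ kills $T\f(C)$, iff $I\ge 1$. For (2), equality $m(C) = k\cdot\ord_C(\Delta_\f)$ forces $I=k$, so every $\alpha_j$ kills $T\f(C)$, which means $\omega$ has $T\f(C)$ as a $k$-fold null direction; i.e., $\f(C)$ is completely invariant. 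The main delicate point will be that the local factorization $\omega = \prod\alpha_j$ only exists off $\Delta(\cW)\subset Y$; but $m(C)$ and $\ord_C(\Delta_\f)$ are divisorial invariants, so it is enough to compute at a generic $p \in C$ as above.
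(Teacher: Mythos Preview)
Your argument has a genuine gap precisely at the point you flag as ``the main delicate point''. You need $\f(p)\notin\Delta(\cW)$ in order to factor $\omega$ holomorphically into pairwise independent linear forms near $\f(p)$, but if $\f(C)\subset\Delta(\cW)$ there is no such $p\in C$. Your closing remark that $m(C)$ and $\ord_C(\Delta_\f)$ are divisorial invariants does not rescue this: they are indeed determined by the generic point of $C$, but that generic point still maps into $\Delta(\cW)$. The omission is fatal rather than cosmetic. For $k\ge2$ a completely invariant curve necessarily lies in $\Delta(\cW)$, so the equality case of item~(2) falls entirely in the regime you have excluded. In fact, under your standing hypothesis the pairwise independence of the $\alpha_j$ at $\f(p)$ forces $I\le1$ (two independent linear forms on $\C^2$ cannot share a null direction), so your formula $m(C)=I(r-1)$ could never yield $m(C)=k(r-1)$ with $r\ge2$ and $k\ge2$; your proof of~(2) is then vacuous for genuine webs. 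The main inequality is likewise left unproved when $\f(C)\subset\Delta(\cW)$: take $\omega=du^2+u\,dv^2$ and $\f(x,y)=(x^n,y)$; the ``factors'' $du\pm i\sqrt{u}\,dv$ are not holomorphic, and your dichotomy would predict $m(C)=2(n-1)$, whereas $\f^*\omega=n^2x^{2n-2}dx^2+x^ndy^2$ gives $m(C)=n$ for $n\ge2$.

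The paper's proof sidesteps the issue by not factoring $\omega$ at all. It normalizes $\f$ locally to $(x,y)\mapsto(x^n,y)$ (possible at a generic point of $C$ since $\f(C)$ is not a point), writes $\omega=\sum a_{ij}\,dx^idy^j$ in the target, and reads off $\ord_x(\f^*\omega)$ term by term using the linear independence of the monomials $dx^idy^j$. This works uniformly whether or not $\f(C)$ lies in $\Delta(\cW)$.
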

\begin{proof}
Let $\div(\f^*\om)$ be the divisor of zeroes of the twisted symmetric form $\f^*\om$.
First note that its support is included in $\cC_\f$ the critical set of $\f$. Let $C$ be any irreducible component of $\div(\f^*\om)$. At a generic point $p\in C$, and after change of  coordinates both in the source and the target, the map $\f$ takes the form $(x,y) \mapsto (x^n,y)$ with $C = \{x=0\}$ and $n-  1 = \ord_C(\Delta_\f)$. We may also write the symmetric $k$-form as follows $\om = \sum_{i+j=k} a_{ij}(x,y) dx^idy^j$.

Since $x$ does not divide some $a_{i_0j_0}$, we have $\ord_x(\f^*\om) \le \ord_C(\Delta_{\f})\times i_0 \le \ord_C(\Delta_{\f})\times k$. The equality case holds only  when $x$ does not divide $a_{k0}$, and $\ord_x(a_{ij}) \ge (k-i) ( 1- \frac1n)$ for all $j \ge 1$. In particular, the equality $\ord_x(\f^*\om) = \ord_C(\Delta_{\f}) \times k$ implies $\f(C)$ is a component of the discriminant of $\cW$ completely invariant by $\cW$.
We have thus proved:
\begin{equation*}\label{e:div}
\div(\f^*\om) = \sum_{C\subset \Delta_{\f}} m(C)\, C \text{ and } m(C) \le k\times \ord_C(\Delta_\f)~,
\end{equation*}
and equality $m(C) = k\times \ord_C(\Delta_\f)$ implies $\f(C)$ is completely invariant by $\cW$.
\end{proof}

\begin{remark}\label{R:contachave}
In general the converse of item (\ref{i:2}) does not hold. In the notation of the above proof,  to ensure that the curve $\f(C)$ is  totally invariant it suffices to have $\ord_x(a_{ij}) \ge 1$ for every $j \ge 1$, while the condition for having $m(C) = k \ord_C(\Delta_\f)$ is stronger in general.
Let us describe in more details the situation in the case of a $2$-web, given by
$\om = a_0 dx^2 + a_1 dxdy + a_2 dy^2$ at a neighborhood of a general point of $\f(C)$.
Then the following assertions hold true.
\begin{enumerate}
\item If  $a_2\neq0$ on $\f(C)$   then $m(C) =0$; $\f(C)$ is not $\cW$-invariant;  and $C$ is not $\cW$-invariant.
\item If $\ord_x(a_2)\ge1$ and  $a_1 \neq0$ on $\f(C)$ then $m(C)  = \ord_C(\Delta_\f)$;   $\f(C)$ is $\cW$-invariant but not in the discriminant;  and $C$ is $\cW$-invariant but is  not contained
in the discriminant.
\item If  $\ord_x(a_2) = 1$, $\ord_x(a_1) \ge 1$  then $m(C) = 1+ \ord_C(\Delta_\f)$; $\f(C)$ is completely invariant; and  $C$ is not $\mathcal W$-invariant.
\item If    $\ord_x(a_2) \ge 2$, $\ord_x(a_1) \ge1$ then $m(C)  = 2\, \ord_C(\Delta_\f)$; and both $C$ and $\f(C)$ are completely $\cW$-invariant.
\end{enumerate}

In particular,  if  $\ord_C(\Delta_{\f})=1$,  $\ord_x(a_2) =1$, $\ord_x(a_1) \ge1$ then $m(C)  = 2\, \ord_C(\Delta_\f)$; and  $\f(C)$ is completely $\cW$-invariant, but $C$ is not $\cW$-invariant.
\end{remark}

\subsection{Bound on the degree of $k$-webs invariant by an endomorphism of $\mathbb P^2$}
With Lemma \ref{L:deg-web} at hand it is a simple matter to prove the following

\begin{prop}\label{p:deg-web}
Let $\mathcal W$ be a $k$-web on $\mathbb P^2$ that is left invariant by a non-invertible
 endomorphism $\f: \mathbb P^2 \to \mathbb P^2$ then $\deg(\mathcal W) \le k$.

Moreover $\deg(\mathcal W) = k$ iff $m(C) = k\ord_C(\Delta_\f)$ for all irreducible curves.
In particular, $\deg(\mathcal W) = k$ implies that any irreducible component of the set of critical values of $\f$ is included in the discriminant of $\mathcal W$.
\end{prop}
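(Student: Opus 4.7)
The plan is to convert the invariance relation $\f^*\mathcal W = \mathcal W$ into a numerical identity between line-bundle degrees on $\mathbb P^2$ and then close the estimate with the pointwise bound supplied by Lemma \ref{L:deg-web}. Writing $d\ge 2$ for the algebraic degree of $\f$, Example \ref{ex:web in P2} identifies $N\mathcal W \cong \mathcal O_{\mathbb P^2}(\deg\mathcal W + 2k)$. Combining the pull-back formula
$$N(\f^*\mathcal W) = \f^*N\mathcal W \otimes \mathcal O_{\mathbb P^2}\Bigl(-\sum_C m(C)\, C\Bigr)$$
recalled just before Lemma \ref{L:deg-web} with the invariance $N(\f^*\mathcal W) = N\mathcal W$ yields the identity
$$\sum_C m(C) \deg C = (d-1)\bigl(\deg \mathcal W + 2k\bigr).$$

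Next, the canonical bundle formula $K_{\mathbb P^2} = \f^*K_{\mathbb P^2} + \Delta_\f$ gives $\deg \Delta_\f = 3(d-1)$. Plugging the bound $m(C) \le k\,\ord_C(\Delta_\f)$ of Lemma \ref{L:deg-web} into the identity above produces
$$(d-1)(\deg\mathcal W + 2k) \;\le\; k\,\deg\Delta_\f \;=\; 3k(d-1),$$
and dividing by $d-1>0$ yields the desired bound $\deg\mathcal W \le k$.

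For the equality case, observe that $m(C)=0=\ord_C(\Delta_\f)$ automatically whenever $C$ is not a component of the ramification divisor, so $\deg\mathcal W = k$ is equivalent to saturating the inequality along every ramification component, i.e.\ to $m(C) = k\,\ord_C(\Delta_\f)$ for every irreducible curve. Part (2) of Lemma \ref{L:deg-web} then forces $\f(C)$ to be completely invariant by $\mathcal W$ for every critical component $C$; since along a completely invariant curve the defining $k$-symmetric form degenerates to a pure $k$-th power (so all $k$ tangent directions of $\mathcal W$ coincide), such a curve automatically lies in the discriminant $\Delta(\mathcal W)$. This delivers the final assertion: every irreducible component of the critical value set $\f(\cC_\f)$ is contained in the discriminant of $\mathcal W$.

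The argument is essentially line-bundle bookkeeping once Example \ref{ex:web in P2} and Lemma \ref{L:deg-web} are in place, so I do not foresee any substantial obstacle; the only thing to be careful about is that Lemma \ref{L:deg-web} controls $m(C)$ only for curves $C$ whose image is a curve, which is precisely the case for components of the ramification divisor (the only curves potentially contributing to $\sum_C m(C)\deg C$), so the global and local inputs match cleanly.
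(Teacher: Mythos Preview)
Your proof is correct and follows essentially the same route as the paper: both arguments compare the degrees of $N\mathcal W$ and $\f^*N\mathcal W$ via the identity $\sum_C m(C)\deg C = (d-1)(\deg\mathcal W+2k)$, bound the left side by $k\deg\Delta_\f=3k(d-1)$ using Lemma~\ref{L:deg-web}, and read off the equality case termwise. Your explicit remark that a completely invariant curve (for $k\ge 2$) necessarily lies in $\Delta(\mathcal W)$ is a welcome clarification, since the statement of Lemma~\ref{L:deg-web}(2) only asserts complete invariance while the paper's proof of the lemma actually proves the stronger inclusion in the discriminant.
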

\begin{proof}
We apply the previous computation to the case $X = Y = \PP^2$. 
Choose a $k$-symmetric $1$-form $\omega \in H^0(Y,\sym \Omega^1_Y \otimes N\mathcal W)$ defining $\mathcal W$. 

Pick a generic line $\ell$, and denote by $i: \ell \to \PP^2$ the inclusion map. As in the Example~\ref{ex:web in P2} we have $\deg(\cW) = \deg \div(i^*\om) = \deg (\sym\Om^1_{\ell} \otimes i^*N\mathcal W) = -2k + \deg(N\mathcal W)$, hence $\deg(\cW) +2k = \deg(N\mathcal W)$.

Now observe that $\f^* \om$ is a section of $\sym \Omega^1_Y \otimes \f^* N\mathcal W$
whose  restriction to $l$ has degree
$\deg(\div (\f^*\om)) = (d-1) \deg(N\mathcal W)$ if $\f^* \cW = \cW$ where $d$ is the algebraic degree of $\f$.

Now Lemma~\ref{L:deg-web} implies  $\deg(\div (\f^*\om)) \le k \deg (\Delta_\f) = 3k (d-1)$, whence  $\deg(\cW) = \deg(N\mathcal{W}) -2k \le k$ as soon as $d\ge2$.

\smallskip

Suppose that  $\deg(\mathcal W) = k$. Then $\deg(\div (\f^*\om)) = 3k (d-1)$
hence $m(C) = k\ord_C(\Delta_\f)$ for any irreducible curve. In particular, 
the image of any irreducible component of the critical set lies in the discriminant of $\mathcal W$ by Lemma \ref{L:deg-web} (2).
\end{proof}

This proposition is the analogue of \cite[Proposition 2.2]{FP} which is the key computation leading to the classification
of foliations of $\mathbb P^2$ invariant by endomorphisms, see \cite[Theorem C]{FP}.

%%%%%%%%%%%%%%%%%%%%%%%%%%%%%%

\section{Invariant $k$-webs with $k\ge3$}\label{S:thmA}

%%%

In this section we give a proof of Theorem~\ref{TI:A}.

\subsection{How many foliations an endomorphism can preserve?}

The key step in the proof of Theorem~\ref{TI:A} is to prove the following result.
Recall the definition of the algebraic entropy $h(\f)$ of a rational self-map from the introduction.

\begin{prop}\label{P:pencil-fol}
Let $\f : X \dashrightarrow X$ be a rational map.
Suppose $h(\f)>0$, and $\f$ preserves three distinct foliations $\cF_0, \cF_1, \cF_\infty$. Then the set of all $\f$-invariant foliations is a pencil parameterized by $\PP^1$. Moreover every $k$-web invariant by $\f$ is a product of $k$ of these foliations.
\end{prop}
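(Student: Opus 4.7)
The first step is to invoke the main result of \cite{CF, FP}: a rational surface map $\phi$ with $h(\phi) > 0$ preserving a foliation without a rational first integral must be Lattès-like. If one of the $\cF_i$ has a rational first integral then $\phi$ preserves an associated fibration, and an inspection of the classification in \cite{FP} shows that such a map cannot support three distinct invariant foliations. Hence $\phi$ is Lattès-like: there exist a $\phi$-invariant Zariski open $\cU \subset X$, a Galois covering $\pi : \C^2 \to \cU$ with discrete deck group $\Gamma \subset \mathrm{Aff}(\C^2)$, and an affine lift $A(z) = Lz + b$ of $\phi|_{\cU}$. Because $h(\phi)>0$, the linear part $L$ satisfies $|\det L| > 1$; in particular $I-L$ is invertible and, after translation, we may assume $A = L$ fixes the origin.

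\textbf{Linearizing invariant foliations and forcing $L$ to be scalar.} Each $\cF_i$ pulls back to an $A$- and $\Gamma$-invariant foliation $\tilde{\cF}_i$ on $\C^2$ whose direction field is a meromorphic map $f_i : \C^2 \dto \PP^1$ satisfying the functional equation $f_i(Lz) = L_* f_i(z)$, where $L_* \in \PGL(2,\C)$ is the induced action on directions. At $z=0$ this forces $f_i(0)$ to be a fixed point of $L_*$, i.e.\ an eigendirection of $L$. A Taylor expansion of $f_i$ at $0$ shows that in the scalar case $L = \lambda I$ (where $L_* = \mathrm{id}$) no nonconstant solution exists, while in the non-scalar case the only solutions that survive after imposing $\Gamma$-invariance under the translations in $\Gamma$ are the constants pointing in one of the two eigendirections. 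In either case each $\tilde{\cF}_i$ is a \emph{linear} foliation in an eigendirection $v_i$ of $L$; the existence of three distinct $v_i$'s -- more than the two eigendirections of any non-scalar operator -- forces $L$ to be a scalar matrix $\lambda I$ with $|\lambda|>1$.

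\textbf{The pencil of invariant foliations.} With $L = \lambda I$, every direction in $\PP^1$ is $L$-invariant, so every linear foliation $\tilde{\cF}_v$ on $\C^2$ is automatically $A$-invariant. Next, since each $\tilde{\cF}_{v_i}$ is $\Gamma$-invariant, the action of the linear parts of elements of $\Gamma$ on $\PP^1$ must fix the three distinct points $v_0,v_1,v_\infty$, so their image in $\PGL(2,\C)$ is trivial -- the linear parts of $\Gamma$ are scalar matrices. Consequently every $\tilde{\cF}_v$ is $\Gamma$-invariant and descends to a $\phi$-invariant foliation $\cF_v := \pi_*\tilde{\cF}_v$ on $\cU$, which extends across the codimension-one complement to $X$. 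The same Taylor argument conversely shows that any $\phi$-invariant foliation pulls back to a linear foliation on $\C^2$ and therefore coincides with some $\cF_v$; hence the set of $\phi$-invariant foliations is precisely the pencil $\{\cF_v\}_{v\in\PP^1}$.

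\textbf{Invariant webs decompose.} For a $\phi$-invariant $k$-web $\cW$ on $X$, the pull-back $\pi^*\cW$ is an $A$-invariant $k$-web on $\C^2$ defined by a $k$-symmetric form $\omega = \sum_{i+j=k} a_{ij}(z)\, dz_1^i \, dz_2^j$ satisfying $A^*\omega = h \cdot \omega$ for some meromorphic $h$. With $L=\lambda I$, pull-back rescales each basis vector $dz_\bullet$ uniformly by $\lambda$, so evaluating at $z=0$ yields $h \equiv \lambda^k$, and then $a_{ij}(\lambda z) = a_{ij}(z)$ for every $(i,j)$; the Taylor argument now forces each $a_{ij}$ to be constant. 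Hence $\omega$ is a constant $k$-symmetric form on $\C^2$, which factors as a product of $k$ linear forms, so $\pi^*\cW = \tilde{\cF}_{v_1} \boxtimes \cdots \boxtimes \tilde{\cF}_{v_k}$ for some $v_1,\ldots,v_k \in \PP^1$. Pushing down yields $\cW = \cF_{v_1} \boxtimes \cdots \boxtimes \cF_{v_k}$, a product of $k$ members of the pencil. The principal obstacle in this plan is the opening step, namely checking carefully via \cite{FP} that no fibration-preserving map with positive entropy can carry three distinct invariant foliations.
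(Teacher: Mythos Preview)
Your opening reduction does not go through. You assert that if some $\cF_i$ admits a rational first integral then ``an inspection of the classification in \cite{FP}'' rules out three distinct invariant foliations; but this is false. The monomial map $(x,y)\mapsto(x^d,y^d)$ on $(\C^*)^2$ has $h>0$, preserves the three foliations $[dx/x]$, $[dy/y]$, $[dx/x+dy/y]$ (indeed the whole pencil $[dx/x+\la\,dy/y]$), and every one of these has a rational first integral. So the case you dismiss is exactly the case that must be handled, and \cite{CF,FP}---which concern a single invariant foliation \emph{without} first integral---simply do not apply to it. More structurally, what you are trying to invoke (that $\f$ is Latt\`es-like) is the content of Theorem~A of the paper, whose proof \emph{uses} Proposition~\ref{P:pencil-fol}; citing it here reverses the paper's logical flow, and the raw output of \cite{CF,FP} only gives the affine model after a birational conjugacy and a further cyclic cover, not on $X$ itself. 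There is also a smaller slip: $|\det L|>1$ does not imply $I-L$ is invertible, and your Taylor argument that $f_i(\la z)=f_i(z)$ forces $f_i$ constant fails for meromorphic $f_i$ (e.g.\ $z_1/z_2$); you would need the $\Gamma$-invariance much more seriously than you use it.

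The paper's argument avoids all of this machinery. The key observation is that meromorphic $1$-forms on $X$ form a $2$-dimensional vector space over $\C(X)$, so after rescaling one may arrange $\om_0-\om_1+\om_\infty=0$. Pulling back by $\f$ and comparing coefficients forces the three multipliers $h_i$ in $\f^*\om_i=h_i\om_i$ to coincide. Any further invariant foliation is then given by $a\om_0+b\om_\infty$ with $a/b\in\C(X)$ satisfying $(a/b)\circ\f=a/b$; if this ratio were nonconstant it would define a fibration on whose base $\f$ acts trivially, and a repelling-periodic-point argument (three invariant directions force $d\f_p$ to be a homothety) gives a contradiction. Thus $a/b\in\C$ and the invariant foliations form a pencil. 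The web statement is then a short cross-ratio computation on the graph surface $\mathbb W$. This route is elementary, needs no classification, and in particular does not presuppose the Latt\`es-like structure.
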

\begin{proof}
We start by observing that space of all meromorphic $1$-forms over $X$ forms a $2$-dimensional vector space over the field of meromorphic functions $\C(X)$. Indeed pick any 
two meromophic $1$-forms $\om, \om'$ that are regular and independent at a point $p$.
It implies the two forms to be indepedent at a generic point, hence there is no non-trivial relation $\psi \om + \psi' \om' = 0$ for $\psi, \psi' \in \C(X)$.
Now for any other form $\om''$ not proportional to $\om'$, we have $\om '' \wedge ( \om - \psi \om') = 0$ where $ \psi = (\om \wedge \om'')/ (\om' \wedge \om'')$. Whence $\om'' = \psi' ( \om - \psi \om') $ for some $\psi' \in \C(X)$. This implies our claim.

\smallskip

Let us define each foliation $\cF_0, \cF_1, \cF_\infty$ by a global meromorphic $1$-form $\om_i$, $i\in \{ 0,1,\infty\}$.
By the above observation, we may multiply the $1$-forms in order to have $\om_0 - \om_1 + \om_\infty=0$.

 By $\f$-invariance, we may find meromorphic functions $h_i$ such that
$\f^*\om_i = h_i\, \om_i$ for all $i\in \{ 0,1,\infty\}$.
 From the equality
$$ h_1\, (\om_0 + \om_\infty) = h_1\, \om_1 = \f^*\om_1 = \f^*(\om_0+\om_\infty) = h_0\, \om_0 + h_\infty\,\om_\infty ~,$$
we conclude $h_1 = h_0 = h_\infty$. In the sequel, we shall write $h$ for this function.

\smallskip

Suppose now $\cG$ is an $\f$-invariant foliation which is determined by a meromorphic $1$-form $\om$.
Pick $a,b\in \C(X)\setminus \{ 0 \}$ such that $\om = a \om_0 + b \om_\infty$. The $\f$-invariance of $\cG$
imply the equality $a \circ \f / b\circ \f = a/b$. If the quotient map
$\mu \= a/b$ is non constant, then $\mu: X \dto \PP^1$ defines a rational fibration for which each fiber is preserved by $\f$.
Pick a generic fiber $F$ and look at the induced holomorphic map $\f_F: F \to F$.
Since the action of $\f$ on the base of the fibration is the identity, the topological degree of $\f$
equals $e(\f_F)$, and $\la(\f) = \max \{ e(\f), e(\f_F) \} = e(\f_F)$ also.
Since we assumed $\max \{ \la(\f), e(\f) \} >1$, the map $\f_F$ is non invertible and $F$ is either a rational or an elliptic curve. In any case, replacing $\f$ by an iterate if necessary, one can find a fixed point $p\in F$ which is repelling for $\f_F$. Since $F$ is an arbitrary fiber of $\mu$ and
the repelling periodic points of $\f_F$ are Zariski dense, one may further assume the three foliations $\cF_0, \cF_1$ and $\cF_\infty$ are smooth and pairwise transversal at this point. Since the three directions determined by these foliations are preserved by $\f$, the differential $d\f_p$ is a homothety, so that the action of $\f$ on the
 base of the fibration is not identity.
This contradiction shows that  any $\f$-invariant foliation is induced by a $1$-form $a\om_0 + b\om_\infty$ with $a,b \in \C$.

\smallskip

Let now $\mathcal W$ be an irreducible $k$-web that is invariant by $\f$, and distinct from  
$\cF_0$, $\cF_1$, and $\cF_\infty$.
As in Section \ref{S:monodromy} consider the normalization $\pi: \mathbb W \to X$ of the graph of $\mathcal W$, so that $\mathcal W = \cF \boxtimes \mathcal W '$ for some foliation $\cF$. 
Denote by $\cF_{0,W}$, $\cF_{1,W}$, and $\cF_{\infty,W}$ the lifts of  $\cF_0$, $\cF_1$, and $\cF_\infty$ respectively to $\mathbb W$. The preceding argument shows that 
$\cF$ can be defined by a meromorphic $1$-form $a \pi^* \om_0 + b \pi^* \om_1$ for some $a,b \in \C^*$. 
This implies that the  function sending a point $q \in \mathbb W$ to the cross-ratio of $T_q\mathcal F_{0,W},T_q\mathcal F_{1,W}, T_q\mathcal F_{\infty,W}$ and $T_q \cF$ viewed as elements of $\PP (T_p \mathbb W)$ is a constant.

Back to $X$ we see that for  a generic point $p\in X$  the cross-ratio of $T_p\mathcal F_0,T_p\mathcal F_1, T_p\mathcal F_{\infty}$ and the tangent space of any leaf of $\mathcal W$
through $p$ is the same. In other words, $\mathcal W$ is a foliation. 
\end{proof}

\begin{cor}\label{C:nofirstintegral}
If  $\f$ and $\cF_0, \cF_1, \cF_\infty$ are as in Proposition \ref{P:pencil-fol} then
$\f$ leaves invariant at least one foliation without rational first integral.
\end{cor}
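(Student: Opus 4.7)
The plan is to argue by contradiction. Suppose every $\f$-invariant foliation admits a rational first integral. By Proposition~\ref{P:pencil-fol} the $\f$-invariant foliations form a $\PP^1$-pencil $\{\cF_t\}_{t\in\PP^1}$, and under this hypothesis each $\cF_t$ carries a rational first integral $\mu_t:X\dto\PP^1$ satisfying $\mu_t\circ\f=\rho_t\circ\mu_t$ for some $\rho_t\in\mathrm{End}(\PP^1)$.

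First I would pick two such first integrals, $\mu_0$ and $\mu_\infty$. Since $\cF_0\ne\cF_\infty$ the rational map $\Phi=(\mu_0,\mu_\infty):X\dto\PP^1\times\PP^1$ is generically finite and semi-conjugates $\f$ to a product endomorphism $\tilde\f(u,v)=(\rho_0(u),\rho_\infty(v))$. Writing $\om_0=g_0\,d\mu_0$ and $\om_\infty=g_\infty\,d\mu_\infty$, the pencil $\{\cF_t\}$ pushes forward to the pencil $[g_0\,du+tg_\infty\,dv]$ on $\PP^1\times\PP^1$, and asking for a rational first integral $H_t$ of every member translates into the condition that $H_{t,v}/H_{t,u}=t\psi$ (with $\psi:=g_\infty/g_0$) admits a rational solution for each $t\in\PP^1$.

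Next I would combine this with the functional equation $\psi(\rho_0(u),\rho_\infty(v))\rho_\infty'(v)=\rho_0'(u)\psi(u,v)$ coming from the $\tilde\f$-invariance of each member, aiming to conclude that $\psi$ must be constant (after a birational change of coordinates on $\PP^1\times\PP^1$). Once $\psi$ is constant the pencil reduces to the standard linear pencil $[du+c\,dv]$, and its $\tilde\f$-invariance gives $\rho_0'(u)=\rho_\infty'(v)$; as the two sides depend on disjoint variables, both must be a common constant, so $\rho_0,\rho_\infty$ are affine of degree one. Hence $e(\tilde\f)=1$, and since generically finite semi-conjugation preserves the topological degree we deduce $e(\f)=1$, i.e.\ $\f$ is birational. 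But $\f$ still preserves the rational fibration $\mu_0$, and by the Diller--Favre classification a birational surface map preserving a fibration has $\la(\f)=1$; combined with $e(\f)=1$ this yields $h(\f)=0$, contradicting the hypothesis $h(\f)>0$.

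The main obstacle will be establishing that $\psi$ must be constant. Heuristic coefficient-matching in $t$ (writing $H_t$ as a linear or logarithmic polynomial in $t$) quickly reduces $\psi$ to a separable form $\alpha(u)\beta(v)$, but ruling out genuinely nonconstant separable $\psi$ compatible with the $\tilde\f$-equivariance constraint requires more care. I expect to close this step by a Darboux--Jouanolou type argument: within the family $[g_0\,du+tg_\infty\,dv]$ the set of $t$ carrying a rational first integral of bounded degree is a Zariski closed subset of $\PP^1$, and its equality with the whole $\PP^1$, together with the equivariance constraint, should force the underlying pencil to be, up to coordinate change, the standard linear one.
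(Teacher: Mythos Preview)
Your overall strategy of reducing to a product map on $\PP^1\times\PP^1$ via two first integrals is the same starting point as the paper, but from there the approaches diverge and your proposal contains two genuine gaps.

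\textbf{First gap.} You write that the pencil pushes forward to $[g_0\,du+tg_\infty\,dv]$ on $\PP^1\times\PP^1$ and then treat $\psi=g_\infty/g_0$ as a function of $(u,v)$. But $g_0,g_\infty$ (and hence $\psi$) are meromorphic functions on $X$, and the map $\Phi=(\mu_0,\mu_\infty)$ is only generically finite, in general of degree larger than one. There is no reason for $\psi$ to be constant on the fibres of $\Phi$, so the foliations $\cF_t$ with $t\neq 0,\infty$ need not descend to $\PP^1\times\PP^1$ at all. Consequently your functional equation for $\psi$ and your integrability condition $H_{t,v}/H_{t,u}=t\psi$ are posed on the wrong space.

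\textbf{Second gap.} Even granting the descent, the step you yourself flag as the obstacle --- forcing $\psi$ to be constant --- is not actually carried out. The Darboux--Jouanolou heuristic you invoke only says that the parameters $t$ admitting a first integral of \emph{bounded} degree form a closed subset; to conclude this subset is all of $\PP^1$ you would need a uniform bound on $\deg H_t$, which is nowhere established and in fact is exactly what is at stake.

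The paper's proof goes in the opposite direction. Rather than trying to force $\rho_0,\rho_\infty$ to be invertible, it uses $h(\f)>0$ to ensure that at least two of $\rho_0,\rho_1,\rho_\infty$ are \emph{non}-invertible, hence that $\f$ has a Zariski-dense set of periodic points. Fixing a periodic point $p$ outside the discriminant of $\cF_0\boxtimes\cF_1\boxtimes\cF_\infty$, the leaf of each $\cF_\lambda$ through $p$ is periodic, hence algebraic. Since $\PP^1$ is uncountable while Hilbert polynomials form a countable set, infinitely many of these leaves lie in a single component of a Hilbert scheme; the resulting $\f$-invariant algebraic family of curves all passing through $p$ yields an invariant web that cannot be a finite product of the $\cF_\lambda$, contradicting Proposition~\ref{P:pencil-fol}. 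This boundedness-plus-periodic-point argument replaces the analytic control you are attempting and avoids the descent issue entirely.
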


\begin{proof}
Suppose this is not the case and let $f_i: X \to C_i$ be  rational first integrals for  $\cF_i$ with $i \in \{ 0, 1, \infty \} $.
Thanks to Stein's factorization theorem we can, and will, assume that the generic fiber of $f_i$ is irreducible.
Consequently there are rational maps $\f_i  : C_i \to C_i$ for which $f_i \circ \f = \f_i \circ f_i$, and  for any two
distinct elements $i,j \in \{ 0 , 1 , \infty \}$  the map $\f$ is semi-conjugate to $\f_i \times \f_j: C_i \times C_j \to C_i \times C_j$ through
$f_i \times f_j : X \dashrightarrow C_i \times C_j$.

We claim that at least two of the maps $\f_0, \f_1, \f_{\infty}$ are non-invertible. 
Otherwise $\f$ would be semi-conjugate to an automorphism 
This would imply $\f$ to have topological degree $1$, and dynamical degree $1$ by \cite{dinh-nguyen} so that $\f$ would have zero algebraic entropy. Thus we can assume  that  $\f_0$ and $\f_1$ are non-invertible. As such  the set of periodic points of both maps  is Zariski-dense, and the same is true for $\f$.

Take a periodic point $p$ of $\f$ outside the discriminant of $\cF_0 \boxtimes \cF_1 \boxtimes \cF_{\infty}$. For each of the foliations $(\mathcal F_{\la})_{\la \in \PP^1}$ given by Proposition \ref{P:pencil-fol} there is a unique leaf passing through $p$. Since $p$ is periodic this leaf is
sent to itself by some iterate of $\f$.

Suppose that for each $\la \in \PP^1$ the leaf of $\cF_{\la}$ passing through $p$ is algebraic, and let $C_{\la}$ be this leaf. 
Fix any very ample line bundle $L\to X$, and consider the Hilbert polynomial 
$P_{\la} (n) = \chi ( C_{\la} , L^{\otimes n})$. Since $\PP^1(\C)$ is uncountable, there exists a polynomial $P$ and an infinite set of $\la$'s such that 
$P_{\la} = P$. Let $Z$ be the Zariski closure  of the curves 
$C_{\la}$ such that $P_{\la} = P$ in the Hilbert scheme of curves on $X$ having $P$ as Hilbert polynomial.   Any curve in $Z$ is left fixed by $\f$ so that we may consider the (algebraic) web induced  in $X$ by the choice of any curve included in $Z$. It is an $\f$-invariant web all whose leaves pass through the point $p$. 
 Proposition \ref{P:pencil-fol} leads to a contradiction which establishes the corollary.
\end{proof}

\begin{remark}
For any integer $d\in \Z$, the monomial maps $(x,y) \mapsto (x^d, y^d)$ in $(\C^*)^2$
preserve infinitely many foliations each having a rational first integral. 
\end{remark}

\subsection{Proof of Theorem~\ref{TI:A}}

Start with a dominant rational map $\f : X \dto X$ with $h(\f) >0$ defined on a smooth (connected) surface. 

By  Proposition~\ref{p:weblift}, there exists a finite (Galois connected) cover $\mu: \hat{X} \to X$ such that some iterate $\f^N$  of $\f$ lifts  to $\hat{X}$ and preserves $k$ distinct foliations  $\hat{\cF}_1, \ldots , \hat{\cF}_k$.
Denote by $\hat{\f}_N$ this lift (a priori $\f$ does not lift to $\hat{X}$ since we assume it to be connected). 
Since $k\ge3$, we may apply Proposition~\ref{P:pencil-fol} so that $\hat{\f}_N$ preserves a pencil of foliations  $\hat{\cF}_\la$ parameterized by $\la \in \PP^1$.
Corollary \ref{C:nofirstintegral} implies at least one foliation $\hat{\cF}_{\la_*}$ has no first integral. 

\medskip

Suppose first that  the foliation $\hat{\cF}_{\lambda_*}$ has Kodaira dimension zero.  
We are in the condition to apply the classification of foliations of Kodaira dimension zero invariant by rational maps. The case of birational maps was treated in \cite[Theorem 1.1]{CF} and the case of non-invertible rational maps was treated in  ~\cite[Theorem~4.3]{FP}. Below we combine both results and describe foliations of Kodaira dimension zero invariant by rational maps of positive algebraic entropy.

\begin{thm}\label{T:clas-kod0}
	Let $(X,\mathcal F)$ be a foliation on a projective surface with $\kod(\cF) =0$  and without rational first integral. Suppose $\f$ is a dominant  rational map preserving $\cF$ and satisfying $h(\f)>0$.
	Then up to birational conjugacy and to a finite cyclic covering we are in
	one of the following five cases.
	\begin{enumerate}
		\item[$\kappa_0$(1):] The surface is a torus $X = \C^2/ \Lambda$, $\cF$ is a linear foliation and $\f$ is a linear diagonalizable map.
		
		\item[$\kappa_0$(2):] The surface is a ruled surface over an elliptic curve $\pi: X \to E$ whose monodromy
		is given by a representation $\rho: \pi_1(E) \to (\C^*,\times)$. In affine coordinates
		$x$ on $\C^*$ and $y$ on the universal covering  of  $E$,  $\cF$ is induced by the $1$-form $\omega =dy + \la x^{-1} d x$ where $\lambda \in \mathbb C$, and $\f(x,y) = (x^a,ay)$ where $a\in \Z \setminus \{ -1, 0, 1 \}$.
		
		\item[$\kappa_0$(3):] Same as in case~$\kappa_0$(2) with $\rho: \pi_1(E) \to (\C,+)$, $\omega= dy + \lambda dx$  where $\lambda \in \mathbb C$, $\f(x,y) = (\zeta x+b,\zeta y)$ where
		$b \in \mathbb C$, and $\zeta \in \C^*$ with
		$|\zeta| \neq 1$.
		
		\item[$\kappa_0$(4):] The surface is $\PP^1 \times \PP^1$, the foliation $\cF$ is given in affine coordinates by the form
		$\la x^{-1} dx + dy$ and $\f(x,y) = (x^a, ay)$ with $\la \in \C^*$ and $a \in \Z \setminus \{ -1, 0, 1\}$.
		
		\item[$\kappa_0$(5):] The surface is $\PP^1 \times \PP^1$, the foliation $\cF$ is given in affine coordinates by the form
		$\la x^{-1}dx + \mu y^{-1} dy$ and $\f(x,y) = (x^ay^b, x^cy^d)$ where $\la,\mu \in \C^*$,
		with $M= \left[\begin{smallmatrix}
		a &b \\c &d
		\end{smallmatrix}\right]
		\in \GL(2,\Z)$, with $\la/\mu \notin \Q_+$, $|ad-bc| \neq 0 $, and $M$ diagonalizable over $\C$.
		
	\end{enumerate}
\end{thm}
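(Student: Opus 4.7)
The plan is to split the argument according to whether $\f$ is birational or genuinely non-invertible, and then assemble the classification from the two existing results in the literature. Since $h(\f)>0$ means $\max\{e(\f),\la(\f)\}>1$, we lie in exactly one of the following two cases: either $e(\f)\ge 2$ (non-invertible) or $\f$ is birational with $\la(\f)>1$. In each case $\kod(\cF)=0$ combined with the absence of a rational first integral is exactly the input fed to the pre-existing classifications.

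In the birational case I would invoke \cite[Theorem 1.1]{CF}, which classifies birational surface maps with positive dynamical degree preserving a foliation of Kodaira dimension zero without rational first integral. Up to birational conjugacy and finite cyclic cover, the only possibilities are a complex torus $\C^2/\Lambda$ with a linear foliation and a diagonalizable linear self-map (this contributes case $\kappa_0(1)$), and the ruled surface over an elliptic curve with \emph{additive} monodromy (this contributes case $\kappa_0(3)$). In each case the displayed normal form of $\f$ is obtained by imposing the invariance condition $\f^*\om\wedge\om=0$ for the normalized $1$-form $\om$ defining $\cF$.

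In the non-invertible case I would invoke \cite[Theorem 4.3]{FP}, whose proof performs the analogous classification. The resulting models are: again the complex torus (contributing once more to case $\kappa_0(1)$, but now with $a\in\Z\setminus\{-1,0,1\}$ arising from the integrality forced by the finite cover and non-invertibility); the elliptic ruled surface with \emph{multiplicative} monodromy (case $\kappa_0(2)$); and $\PP^1\times\PP^1$ with either one logarithmic factor (case $\kappa_0(4)$) or two (case $\kappa_0(5)$). In every case the explicit shape of $\f$ is dictated by $\f^*\om=h\cdot\om$ for a meromorphic $h$, and the monomial/multiplicative structure comes from the logarithmic pole divisor of $\om$ being $\f$-invariant.

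To finish I would collate the conclusions of \cite{CF} and \cite{FP} and check that the parameter restrictions in each item $\kappa_0(\cdot)$ are precisely those forced by the conjunction of $h(\f)>0$, $\f^*\cF=\cF$, and the absence of a rational first integral for $\cF$. The main bookkeeping obstacle, which I expect to be the only real obstacle, is the last of these: showing, for instance, that $\la/\mu\notin \Q_+$ in case $\kappa_0(5)$ and $\la \in \C$ unrestricted in cases $\kappa_0(2)$--$\kappa_0(3)$ are exactly the conditions ensuring that the logarithmic $1$-form $\la x^{-1}dx+\mu y^{-1}dy$ (resp.\ $dy+\la x^{-1}dx$) admits no non-constant rational first integral. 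No new geometric input beyond the two cited theorems is needed.
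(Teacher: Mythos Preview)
Your overall approach matches the paper's: the theorem is not proved from scratch but is stated as the union of \cite[Theorem~1.1]{CF} (birational case) and \cite[Theorem~4.3]{FP} (non-invertible case), and the paper simply remarks afterward which cases can be birational.

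However, your attribution of cases to the two sources is wrong, and this is worth correcting because it reflects a misunderstanding of the normal forms. You say the birational case yields $\kappa_0(1)$ and $\kappa_0(3)$; the paper says (and it is correct) that the birational case yields only $\kappa_0(1)$ and $\kappa_0(5)$. In $\kappa_0(3)$ the map on the base elliptic curve is $y\mapsto \zeta y$ with $|\zeta|\neq 1$; for this to descend to $E=\C/\Lambda$ one needs $\zeta\Lambda\subset\Lambda$, which forces $|\zeta|>1$ and hence topological degree $|\zeta|^2>1$ on the base, so $\f$ is never birational there. Conversely, in $\kappa_0(5)$ the monomial map has matrix $M\in\GL(2,\Z)$ with $|\det M|\neq 0$ and $M$ diagonalizable; nothing excludes $|\det M|=1$, and indeed hyperbolic toral automorphisms (Anosov maps) give birational examples with $\la(\f)>1$. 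So $\kappa_0(2)$, $\kappa_0(3)$, $\kappa_0(4)$ all come exclusively from \cite{FP}, while $\kappa_0(1)$ and $\kappa_0(5)$ receive contributions from both sources. This does not affect the validity of your strategy, but the sentence ``the ruled surface over an elliptic curve with \emph{additive} monodromy (this contributes case $\kappa_0(3)$)'' in the birational paragraph should be replaced by the monomial case $\kappa_0(5)$.
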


Remark that only in the cases $\kappa_0(1)$ and $\kappa_0(5)$ the map $\f$ can be birational.

After a birational conjugacy $\hat{Y} \dashrightarrow \hat{X}$ 
and a finite cyclic cover $\bar{Y} \to \hat{Y}$, we are in one of the five cases $\kappa_0(1)$ to $\kappa_0(5)$ of Theorem \ref{T:clas-kod0} . Denote by $\hat{\psi}_N$ and $\bar{\psi}_N$ the lifts of $\hat{\f}_N$ to  $\hat{Y}$ and $\bar{Y}$ respectively, and by $\bar{\cG}_\la$ the lift of $\hat{\mathcal F}_{\lambda}$ to $\bar{Y}$.

Assume we are in one of the three cases  $\kappa_0(2), \kappa_0(3)$ or $\kappa_0(4)$. Then $\bar{\psi}_N$ preserves a pencil of foliations (given by the $1$-form 
$dy + \la dx$ in the case  $\kappa_0(3)$, and by $dy + \la \frac{dx}x$ in cases
$\kappa_0(2), \kappa_0(4)$) hence this pencil coincides with $\bar{\cG}_\la$.
We can thus find a discrete subgroup $\Gamma$ of the affine group of $\C^2$ acting properly discontinuously such that: $\C^2/\Gamma$ is isomorphic to a Zariski-dense subset  $\cU \subset \hat{Y}$, any finite collection of foliation in the pencil $\hat{\cF}_\lambda$ gives rise to a parallel web, and $\hat{\psi}_N$ lifts to an affine map say $A_N$ on $\C^2$.

The same conclusion holds in the two remaining cases  $\kappa_0(1), \kappa_0(5)$ for the following reason. Let us discuss the case $\kappa_0(1)$. The other case is completely analogous.
The map $\bar{\psi}_N$ is an endomorphism of a $2$-torus. 
Since $h( \bar{\psi}_N) = h(\f^N)>0$,  $\bar{\psi}_N$ admits a periodic orbit. But $\bar{\psi}_N$ preserves at least three foliations, hence its differential at this periodic point is a homothety. It follows that $A_N$
preserves any foliation in the pencil  and that any finite collection of these foliations forms a parallel web.

\smallskip

We have proved that $\hat{\f}^N$ is a Latt\`es-like map.  We now argue  that $\f$ itself
is also Latt\`es-like. 

Recall that $\hat{X} \to X$ is Galois with Galois group $G$, and that $G$ acts by permuting the  foliations $\hat{\cF}_1, \ldots , \hat{\cF}_k$.  Pick any $g\in G$ and  denote by $g'$ the induced birational map on $\hat{Y}$.  Pick any point $p \in \C^2$
that does not lie in the critical set of $\pi$ such that $ g'$ is a local diffeomorphism at $\pi(p)$ and $g'(\pi(p))$ is not a  critical value of $\pi$. Denote by $\pi^{-1}$ any branch of $\pi$ defined in a neighborhood of 
$g' (\pi(p))$.  The map $ \pi^{-1} \circ \f \circ \pi$ then maps  a parallel $k$-web to a parallel $k$-web
 hence is affine by the next lemma.
\begin{lemma}\label{lem:affine}
Any local diffeomorphism $h : (\C^2,0) \to (\C^2,0)$ mapping a parallel $k$-web, $k \ge 3$, to a parallel $k$-web is affine.
\end{lemma}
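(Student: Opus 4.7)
The plan is to exploit the fact that a parallel $k$-web decomposes into $k$ individual foliations by parallel lines and to track how $h$ permutes these foliations.

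First I would show that $h$ induces a well-defined permutation of the $k$ individual parallel foliations. At any point $p$ outside the (empty, here) discriminant of $\cW_1$, the $k$ tangent directions of $\cW_1$ at $p$ are mapped bijectively by $dh_p$ onto the $k$ tangent directions of $\cW_2$ at $h(p)$. This yields a permutation $\sigma_p \in \mathfrak{S}_k$ of the underlying indexing sets, and because the $k$ directions vary continuously in $p$ and are pairwise distinct, $\sigma_p$ is locally constant. Hence $\sigma_p = \sigma$ is constant, and $h$ sends the foliation $\cF_i$ (by lines parallel to $v_i$) to $\cF_{\sigma(i)}$ (by lines parallel to $w_{\sigma(i)}$) for each $i$.

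Second, precomposing $h$ with suitable linear automorphisms of $\C^2$ in the source and target — which does not affect whether $h$ is affine — I would assume that the first two source directions are the standard basis $e_1 = (1,0)$, $e_2 = (0,1)$ and that the corresponding target directions $w_{\sigma(1)}, w_{\sigma(2)}$ are also $e_1, e_2$. Writing $h = (h_1, h_2)$, the condition that each horizontal line $\{y = \mathrm{const}\}$ maps into a horizontal line forces $\partial_x h_2 \equiv 0$, so $h_2 = h_2(y)$; symmetrically $h_1 = h_1(x)$. Thus $h(x,y) = (h_1(x), h_2(y))$.

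Third, I would use the third direction, $v_3 = (1,1)$ say, whose image direction $w_{\sigma(3)} = (\alpha, \beta)$ satisfies $\alpha\beta \neq 0$ since the $w_i$ are pairwise distinct from $e_1, e_2$. The requirement that $h$ maps each line $\{y - x = \mathrm{const}\}$ into a line $\{\beta u - \alpha v = \mathrm{const}\}$ means that $F(x,y) := \beta h_1(x) - \alpha h_2(y)$ depends only on $y-x$, i.e. $\partial_x F + \partial_y F = 0$. This gives $\beta h_1'(x) = \alpha h_2'(y)$, and separation of variables forces both sides to be constant. Hence $h_1$ and $h_2$ are affine, so $h$ is affine.

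The main subtlety is really in the first paragraph: one needs that $h$ genuinely permutes individual parallel foliations (rather than merely preserving the multi-direction data pointwise). The computation in the last paragraph is then purely algebraic, and the hypothesis $k \ge 3$ enters precisely because with only two directions one cannot rule out separately nonlinear $h_1, h_2$.
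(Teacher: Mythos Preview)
Your proof is correct and follows essentially the same approach as the paper's: normalize via linear maps at source and target so that two of the directions become the coordinate axes, deduce that $h(x,y)=(h_1(x),h_2(y))$, and then use the third direction to force $h_1',h_2'$ constant via separation of variables. The paper is simply terser---it skips the permutation discussion (which is immediate here since the directions are constant) and normalizes the third direction in both source and target simultaneously, writing the condition as $\lambda g_1'(x)=g_2'(y)$ rather than your $\beta h_1'(x)=\alpha h_2'(y)$.
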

\begin{proof}
Composing with a suitable affine maps at the source and at the target space, we may assume that $h$ preserves the three foliations $[dx]$, $[dy]$ and $[dx+ \la dy]$ for some $\la \neq 0$.
The preservation of $[dx]$ and $[dy]$ implies  that $h$ can be written under the form $h(x,y) = (g_1(x), g_2(y))$. Since $[dx+ \la dy]$ is fixed, we get $\la g_1'(x) = g_2'(y)$ hence $h$ is affine. 
\end{proof}
Denote by  $A_g$ this affine map. 
Since $\pi \circ A_g  = g' \circ \pi$ on some open set, it follows that any $g'$ lifts to an affine automorphism to $\C^2$. In particular, $G$ leaves the open subset $\mathcal U = \C^2/\Gamma'$ invariant. Since $G$ is finite we may find a birational map $\hat{Y}' \dashrightarrow \hat{Y}$ that is an isomorphism over $\mathcal U$ and such that any element of $G$ induces an automorphisms on $\hat{Y}'$. Denote by $G'$ the subgroup of automorphisms of $\hat{Y}'$ induced by $G$, and define $Y' = \hat{Y}'/G'$. Observe that  $Y'$ is birational to $X$ and contains a Zariski dense open subset ${\mathcal U} '$ isomorphic to the quotient space $\C^2/\Gamma'$ where $\Gamma'$ is the group of affine automorphisms generated by $\Gamma$ and the lifts of all $g'$ to $\C^2$. 

Now look at the map $\f'$ induced by $\f$ on $Y'$. It preserves a web whose lift to $\C^2$ consists of a parallel $k$-web, $k\ge3$. Using the same argument as before based on Lemma \ref{lem:affine} we conclude that $\f$ lifts to an affine map on $\C^2$.

The following diagram summarizes the principal steps of the above proof. The notation $/G$ means that the corresponding arrow is Galois with Galois group $G$, and $\sim$ indicates that the corresponding map is birational. 
\[ \xymatrix{
&
\bar{Y}\ar[d]_{{\rm cyclic}}
&
&
\C^2
\ar[ll]
\ar@/_/[dl]^{/\Gamma}
\ar@/^/[ddl]^{/\Gamma'}
\\
\hat{X} \ar[d]_{/G} \ar@{-->}[r]^{\sim}
&
\hat{Y}\ar@{-->}[r]^{\sim}
&
\hat{Y}'\supset {\mathcal U} \ar[d]_{/G'}
&\\
X\ar@{-->}[rr]^{\sim}
&&
Y' \supset {\mathcal U}'
&
}
\]

\medskip

Suppose next that the  foliation $\hat{\cF}_{\lambda_*}$ has Kodaira dimension one. Since $\hat{\cF}_{\lambda_*}$ does not have a rational first integral, the rational map $\f^N$ is not birational according to \cite[Theorem 1.1]{CF}. The classification of non-invertible rational maps leaving invariant foliations of Kodaira dimension one without rational first integrals is carried out in \cite[Theorem 4.4]{FP}. There are two cases: $\kappa_1(1)$ and $\kappa_1(2)$.
In the case $\kappa_1(1)$, we are in the following situation. 
As above there is a cyclic cover $\bar{Y} \simeq \PP^1 \times \PP^1 \to \hat{Y}$ and a birational map
$\hat{X} \dashrightarrow \hat{Y}$ such that $\f^N$ lifts to $\bar{Y}$ as a map $\hat{\f}_N$ of the form
$(x,y) \mapsto ( ax ,x^my^k )$ with $a\in \C^*$, $m\in \Z$, $k \in \Z \setminus \{ -1, 0, 1\}$, 
$a^{n+1}= k$ and any foliation in the pencil  defined in $\bar{Y}$ by
$$
\om_{\la} = \frac{dy}y + \frac{m \, dx}{(k-1) x} + \la x^n dx
$$
is invariant by $\hat{\f}_N$.

Set $x= e^{z}$, $y = e^{w}$ and $ z'= e^{(n+1)z}$, $w'= w + \frac{m}{k-1} z$. 
Then $\hat{\cF}_\lambda$ is defined by $dw + \frac{m}{(k-1)} dz + \la e^{(n+1) z} dz$ in the $(z,w)$-plane and by the linear form $d w' + \frac{\la}{n+1} dz'$ in the $(z',w')$-plane. 
In particular, the web $\mathcal W$ is locally isomorphic to a pencil of linear foliations hence is parallelizable. 

We now prove that $\f$ lifts to $\bar{Y}$.
\begin{lemma}\label{lem:dyndeg}
Suppose $\f$ is a dominant rational map defined on a surface preserving an irreducible $k$-web $\mathcal W$  with $k\ge 2$ whose leaves are all algebraic. 
Then $e(\f) = \la(\f)^2$.
\end{lemma}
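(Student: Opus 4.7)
The plan is to exploit the algebraicity of the leaves by lifting $\f$ to the graph of $\cW$, where it preserves a fibration. Since $\cW$ is irreducible with algebraic leaves, these form an irreducible $1$-parameter family parameterized by a smooth projective curve $B$, and (on a smooth model of) the graph $\mathbb W \subset X \times B$ comes with a finite projection $\pi_X : \mathbb W \to X$ of degree $k$ and a fibration $\pi_B : \mathbb W \to B$ whose fiber over $b$ is the leaf $L_b$. Since $\f$ preserves $\cW$, it induces a rational endomorphism $\f_B : B \to B$ of some degree $d_B$ and lifts to a dominant rational map $\hat \f : \mathbb W \dto \mathbb W$ whose restriction to a generic fiber of $\pi_B$ has topological degree $d_L$. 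From the finite cover $\pi_X$ one deduces $e(\hat \f) = e(\f)$ and $\la(\hat \f) = \la(\f)$, while the fibration $\pi_B$ yields the standard formulas $e(\hat \f) = d_B d_L$ and $\la(\hat \f) = \max(d_B , d_L)$. The sought equality $e(\f) = \la(\f)^2$ therefore reduces to the single identity $d_B = d_L$.

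To prove $d_B = d_L$ I would work on $\mathbb W$ with two distinguished classes: the fiber class $[F]$ of $\pi_B$ and $[N] := \pi_X^* [L]$, where $[L] \in \NS_\R(X)$ is the class of a generic leaf. Three numerical facts sit at the heart of the argument. First, $\hat \f^* [F] = d_B [F]$ (fibration preservation) and $\hat \f^* [N] = d_B [N]$, since the scheme-theoretic preimage of a generic leaf $L_b$ under $\f$ is the sum of $d_B$ leaves, all of class $[L]$, whence $\f^* [L] = d_B [L]$ on $X$. Second, $[F]^2 = 0$ while $[F] \cdot [N] = [L]^2$, as the fiber $F_b$ projects isomorphically onto $L_b$ and meets $\pi_X^{-1}(L_{b'})$ exactly in the points of $L_b \cap L_{b'}$. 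Third, $[L]^2 > 0$: since $k \ge 2$, a generic pair of distinct leaves must intersect, for otherwise the family $\{ L_b \}$ would define a fibration on $X$, forcing $\cW$ to be a $1$-web.

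The conclusion proceeds via a projection formula argument. Choose a birational morphism $\mu : Y \to \mathbb W$ making $\hat F := \hat \f \circ \mu : Y \to \mathbb W$ holomorphic, and decompose $\hat F^* A = \mu^* \hat \f^* A + E_A$ with $E_A$ supported on the $\mu$-exceptional locus. Applied to $A = A' = [F]$, the identity $\hat F^* A \cdot \hat F^* A' = e(\hat \f) A \cdot A'$ reduces to $E_{[F]}^2 = 0$ (the right-hand side and the term $(\hat \f^* [F])^2 = d_B^2 [F]^2$ both vanish), and the negative-definiteness of the exceptional intersection form then forces $E_{[F]} = 0$. Applied to $A = [F]$ and $A' = [N]$, the expansion yields $d_B^2 [L]^2 + E_{[F]} \cdot E_{[N]} = d_B d_L [L]^2$; the cross term vanishes by $E_{[F]} = 0$, so $d_B^2 = d_B d_L$ and hence $d_B = d_L$. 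The main technical delicacy, which I expect to be the principal obstacle, lies in the careful passage to the holomorphic lift $\hat F$ on a smooth model of $\mathbb W$; the conjunction $[F]^2 = 0$ and $[L]^2 > 0$ provides precisely the leverage needed to eliminate the exceptional correction.
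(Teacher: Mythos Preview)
Your argument is correct, but it follows a genuinely different route from the paper's. The paper invokes the Galois trick (Proposition~\ref{p:weblift}): after passing to an iterate one lifts to a connected Galois cover $\hat X \to X$ on which $\mu^*\cW$ splits as a product of $k$ fibrations $\hat X \to C$, all conjugate under the Galois group. Using any two of them one obtains a generically finite map $\hat X \to C\times C$ semi-conjugating the lift of $\f$ to a diagonal product map $(h,h)$, for which $e=(\deg h)^2=\la^2$ is immediate; invariance of $e$ and $\la$ under generically finite semi-conjugacy (as in \cite{dinh-nguyen}) then finishes the proof.

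Your approach avoids the full Galois closure and the second fibration entirely: you stay on the graph $\mathbb W$, which carries only \emph{one} natural fibration, and compensate by exploiting the pulled-back leaf class $[N]=\pi_X^*[L]$ together with the projection-formula identity $\hat F^*A\cdot\hat F^*A'=e(\hat\f)\,A\cdot A'$. The hypothesis $k\ge 2$ enters in both proofs at the same essential point, but in different guises: for the paper it provides a second transverse fibration on $\hat X$; for you it forces $[L]^2>0$, which is what makes the intersection calculation non-degenerate. Your method is more self-contained and purely intersection-theoretic, at the cost of the small technical detour through the resolution $Y$ and the negative-definiteness argument for $E_{[F]}$. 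The paper's method is shorter once Proposition~\ref{p:weblift} is available and makes the structural reason (semi-conjugacy to a diagonal product) transparent. Both rely on the invariance of dynamical degrees under generically finite covers; your version additionally uses the standard formula $\la(\hat\f)=\max(d_B,d_L)$ for fibered surface maps, which is indeed known (it follows for instance from the relative-degree formalism of \cite{dinh-nguyen}).
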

\begin{proof}
Since $\la(\f^N)= \la(\f)^N$ and $e(\f^N)= e(\f)^N$, up to replace $X$ by a birational model, we may assume that there exists a finite Galois cover $ \mu: \hat{X} \to X$ with group $G$, $\hat{X}$ connected, 
and a fibration $\pi: \hat{X}\to C$ such that the lift $\hat{\f}$ of $\f$ to $\hat{X}$
preserves $\pi$ in the sense that $h\circ \pi = \pi \circ \hat{\f}$ for some $h: C \to C$.
The image of the fibration under $\mu$ is the web $\mathcal W$ and the image of the fibration under the action of the group $G$ is the union of $k$ fibrations each preserved by $\hat{\f}$.
We then get a finite ramified map $\hat{X} \to C \times C$ that semi-conjugates $\hat{\f}$ to the map $(h,h)$. 

We conclude by observing that $\la((h,h)) = \la(\hat{\f})=\la(\f)$ and $e((h,h)) = e(\hat{\f})=e(\f)$, and
$e((h,h)) = \la((h,h))^2$, see \cite{dinh-nguyen}.
 \end{proof}
Observe that $\la(\f^N) = k = e(\f^N)$ hence $\la(\f) = e(\f)$.
Since $\hat{\cF}_0$ and $\hat{\cF}_\infty$ have rational first integrals and $h(\f)>0$
their images in $X$ are necessarily foliations by the previous lemma. 

Pick any generic point $p\in X$ and choose any local lift of the germ $\f : (X,p) \to (X, \f(p))$ to the $(z',w')$-plane. By Lemma \ref{lem:affine} this lift is affine, and since $\f$ preserves the two foliations   $\cF_0, \cF_\infty$ it is in fact diagonal. It follows that we can actually lift locally $\f$
to the $(z,w)$ plane as an affine map of the form $(z,w)\mapsto ( z + a, bw + c z + d)$
with $b\in \C^*$, $a,c, d\in \C$.
By the same argument as in the previous case, this implies $\f$ lifts globally to the $(z,w)$-plane hence is Latt\`es-like as was to be shown.

\medskip

In the case $\kappa_1(2)$ of \cite[Theorem 4.4]{FP}, the pencil of foliations is given in the product of the Riemann sphere with an elliptic curve $\bar{Y}=\PP^1 \times E$ by 
$$\om_{\la} = dy +  \la x^n dx ~,
$$
where $x\in \PP^1$ and $y\in \C$ lies in the universal cover of $E$.
The foliation $\hat{\cF}_\la$ is the pull-back of the linear foliation $dw + \frac{\la}{n+1} dz$ under the ramified cover $z = x^{n+1}$ and $w = y$ hence $\mathcal W$ is parallelizable.

There are only two foliations  with a rational first integral, namely $\hat{\cF}_0$ and $\hat{\cF}_\infty$, and they descend to foliations on $X$ by Lemma \ref{lem:dyndeg}.
Pick any generic point $p\in X$ and choose any local lift of the germ $\f : (X,p) \to (X, \f(p))$ to the $(z,w)$-plane. By Lemma~\ref{lem:affine} this lift is affine, and since $\f$ preserves the two foliations   $\hat{\cF}_0, \hat{\cF}_\infty$ it is in fact diagonal. It follows that we can actually lift locally $\f$ to $\bar{Y}$ as a linear map of the form $(x,y) \mapsto (ax, by)$ for some $a,b\in \C^*$. As before this implies that $\f$ lifts globally to $\bar{Y}$ hence is Latt\`es-like as was to be shown.

This concludes the proof of Theorem~\ref{TI:A}. 
\qed

\begin{remark}\label{rem:kod0}
Observe the proof gives the following statement. Suppose $\f: X \dto X$ is a rational map 
with $h(\f)>0$ that preserves a $k$-web with $k\ge 3$ that is the image under a finite ramified cover of a foliation without first integral and Kodaira dimension $0$.

Up to replace $X$ by a suitable birational model, there exists a Zariski open dense subset $\mathcal U$ and a discrete group $\Gamma$ of affine automorphisms of $\C^2$ such that
$\C^2/G$ is isomorphic to $\mathcal U$, $\phi$ lifts to an affine automorphism to $\C^2$ and $\mathcal W$ is the image of a linear foliation on $\C^2$.
\end{remark}

%%%%%%%%%%%%%%%%%%%%%%%%%%%%%%

\section{Invariant $2$-webs}\label{S:thmB}
In this section, we prove Theorem~\ref{TI:B}.

Let $\mathbb W$ be the graph of $\mathcal W$ in $\PP(TX)$. The natural map $\pi : \mathbb W \to X$ has degree two. Let $\hat{X}$ be  the minimal desingularization of $\mathbb W$ so that the natural projection map $\mu: \hat{X} \to X$ is  Galois.

As in Proposition~\ref{p:weblift}, we get the existence of a rational map $\hat \f : \hat X \dashrightarrow \hat X$ satisfying $\mu \circ  \hat \f = \f \circ \mu$;
and of an involution $\iota : \hat{X} \to \hat{X}$ such that
\[
\mu^* \mathcal W = \mathcal F \boxtimes \iota^* \mathcal F \, .
\]
In particular,  $\mathcal F$ is isomorphic to $\iota^* \mathcal F$, and $\hat \f$  preserves the pair of foliations
$\mathcal F$ and $\iota^* \mathcal F$.   Moreover one of the following identities holds true:
\begin{equation}\label{E:iota}
\hat \f \circ \iota = \hat \f   \text{ or }  \hat \f \circ \iota = \iota \circ \hat \f.
\end{equation}

If $\mathcal F$ does not have a  first integral, then the same the same holds true for $\iota^* \mathcal F$. If no other foliation or web is invariant
by $\hat \f ^2$ then, according to the classification results of \cite{FP} and \cite{CF} and  up to birational maps and ramified coverings,
$\hat X$ is birationally equivalent to an abelian surface or to $\mathbb P^1 \times \mathbb P^1$ and a suitable power of $\hat \f$ is induced by a linear map in the first case, or a monomial map
in the second case. In both cases the map is induced by a diagonalizable matrix with distinct eigenvalues. But this fact is incompatible with the equations
(\ref{E:iota}). 

To see this, note that $\hat \f$ has a Zariski dense subset of periodic points. 
Pick any one of them $p$ and suppose it is fixed by $\hat \f$. Then the differential 
$d \hat \f (p)$ has two different eigenvalues whose eigenvectors are tangent to $\cF$ and $\iota^* \cF$ respectively. On the tangent space at $p$, the differential of $\iota$ is conjugated to the permutation $(x,y) \mapsto (y,x)$ of negative determinant hence
$d(\hat \f) (p) \circ d\iota (p)  \neq  d(\hat \f)(p)$, and $\hat \f \circ \iota \neq \hat \f$. 
But $d\iota (p)$ permutes the two eigenvectors of $d \hat \f (p)$ hence 
$\hat \f \circ \iota = \iota \circ \hat \f$ would imply the two eigenvalues of $d \hat \f (p)$
to be equal, a contradiction.
Thus both foliations have a rational first integral.

Assume   $f : \hat X \dashrightarrow C$ is a rational first integral for $\cF$. Without loss of generality assume the generic fiber
of $f$ is irreducible.   Since $\hat \f$ preserves $\mathcal F$, there exists a (regular) map $f_*\hat \f : C \to C$ such that
\[ f \circ \hat \f  = f_* \hat \f \circ f .
\]

The rational maps
$ F = f \times \iota^* f : \hat X \dashrightarrow C \times C $ and $f_* \hat \f \times f_* \hat \f $ fit into the commutative diagram
\begin{equation}\label{diag:1}
 \xymatrix{
\hat X \ar[dr] \ar@{-->}[dd]_(.65){F} \ar@{-->}[rr]^{\hat \f} && \hat X \ar[dr] \ar@{-->}[dd]^(.65){F}
\\
& X \ar@{-->}[dd] \ar@{-->}[rr]^(.4){\f} &&  X \ar@{-->}[dd]
  \\
 C \times C  \ar[dr] \ar[rr]^{f_* \hat \f \times f_* \hat \f  } && C \times C\ar[dr]  \\
&   C^{(2)}  \ar[rr]^{U_f} && C^{(2)} } \,
\end{equation}
where $C^{(2)}$ is quotient of $C^2$ by the natural involution, and the map from $C^{(2))}$ to itself is the  map $U_f$
induced by  $f_* \hat \f \times f_* \hat \f $. The frontal face of the cube shows that $\f$ is semi-conjugated  to the map $U_f$.

Since $h(\f)> 0$, the curve $C$ must be rational or elliptic. If it is elliptic then $f_* \hat \f \times f_* \hat \f $ would preserve a
whole $1$-parameter family of foliations contradicting our hypothesis.
Thus $C$ must be a rational curve hence $U_f$ is  a Ueda map.  

\smallskip

We now argue to prove that all leaves of $\mathcal W$ are rational. 
In fact we shall prove the stronger fact that all leaves of $\cF$ are rational. Observe that this implies 
$\hat{X}$ to be rational since it carries two independent rational fibrations, hence $X$  is 
rational too.

Observe first that $f_* \hat \f$ is a rational map on $\PP^1$ of degree $\ge 2$. 
Moreover it is not conjugated to a finite quotient of an affine map 
(see \cite{milnor} for a definition), since
otherwise the map $f_* \hat \f \times f_* \hat \f$ would preserve a pencil  of foliations
and $\f$ a non trivial web contradicting our assumption. 

Replacing $\f$ by a suitable iterate, we may suppose that $f_* \hat \f$ admits a fixed point
$p$ such that $D:= F^{-1} (\{p\} \times \PP^1)$ is an irreducible curve fixed by $\hat{\f}$ of (geometric) genus equal to the genus of a generic leaf of $\cF$. 
Now $F$ induces a semi-conjugacy from  $\hat{\f} : D \to D$ to 
$ f_* \hat \f : \PP^1 \to \PP^1$. It implies that $D$ is elliptic or rational. But the former case is impossible since otherwise $ f_* \hat \f$ would be a Latt\`es map (which is a finite quotient of an affine map). 
\qed

\begin{remark}
The $2$-webs described in the statement of Theorem \ref{TI:E}, see also Section \ref{S:ProofE}, show that it is not possible to replace the semi-conjugation by an actual conjugation in
Theorem \ref{TI:B}.
\end{remark}
\section{Endomorphisms of $\PP^2$ preserving a $k$-web with $k\ge3$}\label{S:thmC}
In this section, we prove Theorem~\ref{TI:C}.

Take $\f: \PP^2 \to \PP^2$ a holomorphic map of degree $d\ge 2$ preserving a $k$-web $\cW$, with $k\ge3$.
Apply Proposition~\ref{p:weblift}. We obtain a (possibly disconnected) surface $X_0$, a finite subgroup $G_0$ of $\aut(X)$, a rational map $\f_0$, and $k$ foliations $\cF_1, \ldots, \cF_k$, such that the quotient space $X_0/G_0$ is isomorphic to $\PP^2$, $\f_0$ descends to $\f$ on $X_0/G_0$ and $\cF_1\boxtimes \ldots \boxtimes \cF_k$ project to $\cW$. Notice, in general, that $\f_0$ is no longer holomorphic; that it does not preserve each foliation $\cF_i$ but only permute them; and that $\cW$ is not necessarily the projection of a single foliation.

\subsection{The classification of foliations invariant by rational maps}
Proposition~\ref{P:pencil-fol} implies some iterate $\f_0^N$ preserves a one-parameter family of foliations $\cF_\la$ containing the $\cF_i$'s and at least one foliation in this family does not admit any rational first integral. We may thus apply~\cite[Theorems~4.3,~4.4]{FP} (see also Theorem \ref{T:clas-kod0}). Note that the statement involves a birational conjugacy.
We can thus find a (smooth connected) surface $\hat{X}$, a rational map $\hat{\f}_N: \hat{X} \dto \hat{X}$, a foliation $\hat{\cF}$, and a finite group $\hat{G}$ of \emph{birational} maps on $\hat{X}$ such that:
$\hat{X}/\hat{G}$ is birational\footnote{that is $\hat{G}$ lifts as a finite subgroup  $\tilde{G}$ of $\aut( \tilde{X})$ for some birational model  $\tilde{X}$ of $\hat{X}$, and the surface
$\tilde{X}/\tilde{G}$ is rational.}
to $\PP^2$; $\hat{\f}_N\, \hat{G} = \hat{G}\, \hat{\f}_N$; the induced map by $\hat{\f}_N$ on the quotient space is $\f^N$; and $\hat{\f}_N$ preserves one of the one-parameter family of foliations $\hat{\cF}_\la$ listed in op.cit.

The topological degree and the dynamical degree are invariant under dominant morphisms, see~\cite{dinh-nguyen} (in the case of surfaces it is also a direct application of~\cite[Proposition~3.1]{BFJ}).
We hence have $e(\hat{\f}_N) = d^{2N}$ and $\la(\hat{\f}_N) = d^N$. Looking through the list of cases occuring in Theorem \ref{T:clas-kod0} (see also \cite[Theorems~4.3]{FP}) 
and \cite[Theorem 4.4]{FP} we see that this excludes all cases but $\kappa_0(1)$ and $\kappa_0(5)$.

\subsection{The case $\kappa_0(1)$: quotients of endomorphisms of abelian surfaces.}

Suppose first we are in case $\kappa_0(1)$. This time $\hat{X}$ is a $2$-torus, isomorphic to $\C^2/\Gamma$ for some lattice $\Gamma$; the map $\hat{\f}_N$ is linear associated to a matrix $M\in \GL(2,\C)$ preserving $\Gamma$; and $\hat{\cF}_\la$ is a one-parameter family of linear foliations preserved by $\hat{\f}_N$.
Since any birational map is regular on a $2$-torus, $G$ is a finite subgroup of automorphisms of $\hat{X}$.
As before, $\hat{\f}_N$ preserves a one-parameter family of foliations, hence $M$ is a homothety, and we can write $\hat{\f}_N = \zeta \times \mathrm{id}$ for some $\zeta \in\C^*$.
We shall make use of the following lemma.
\begin{lemma}\label{L:less-except}
Suppose $G$ is a finite subgroup of automorphisms of a $2$-torus $T$, and $\psi: T \to T$ is any holomorphic map on $T$. If there exists a birational map  $\mu : T/G \dto \PP^2$ conjugating the map induced by $\psi$ on $T/G$ to an endomorphism of the projective space of degree at least $2$, then $T/G$ is smooth, and $\mu$ is an isomorphism.
\end{lemma}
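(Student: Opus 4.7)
The plan is to exploit the rigidity of holomorphic endomorphisms of $\PP^2$ of degree at least two to force both the smoothness of $T/G$ and the extension of $\mu$ to an isomorphism.

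First, I would note that $\psi$ is affine: any holomorphic self-map of $T = \C^2/\Gamma$ lifts to $\tilde{\psi}(z) = Az + c$ with $A \Gamma \subseteq \Gamma$, and since topological degree is a birational invariant equal to $d^2$ (where $d = \deg f \ge 2$), one has $|\det A|^2 = d^2$, so $\psi$ is a finite cover of degree $d^2$. Descending along $\pi \colon T \to T/G$ yields a genuine finite morphism $\bar{\psi} \colon T/G \to T/G$, so that $f$ is the birational conjugate of a finite morphism on a normal projective surface.

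Next, I would take a smooth common resolution of $\mu$, giving birational morphisms $\alpha \colon Y \to T/G$ and $\beta \colon Y \to \PP^2$. The map $\beta$ is a composition of point blowups since $\PP^2$ is smooth, and $\alpha$ factors through the minimal resolution of the cyclic quotient singularities of $T/G$. Lifting $f$ via $\beta$ and $\bar{\psi}$ via $\alpha$ produces the same rational self-map $F \colon Y \dto Y$. The exceptional divisors of $Y$ split into two families: $(-1)$-curves and their strict transforms coming from $\beta$, and chains of rational curves with self-intersection $\le -2$ coming from $\alpha$ (by Chevalley--Shephard--Todd, the latter chains are non-trivial precisely at the singular points of $T/G$).

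To conclude, I would show that a non-trivial exceptional configuration of either type is incompatible with $F$ lifting a holomorphic endomorphism of $\PP^2$: since $f$ is finite and polarized, $f$ contracts no curve and satisfies $f^* H \sim d \, H$ for a hyperplane class $H$, and these facts translate into strong numerical constraints on $Y$. The main obstacle lies here: one has to carry out a careful combinatorial analysis of the dual graphs of the exceptional configurations on $Y$ and of the action of $F$ on them, using that $F$ must permute these divisors up to iteration while preserving the induced polarization, to derive a contradiction unless both exceptional loci are empty---in which case $T/G$ is smooth and $\mu$ is an isomorphism.
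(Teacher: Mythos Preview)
Your setup is sound (indeed $\psi$ is affine and $\bar\psi$ is a finite morphism on the normal surface $T/G$), but the proposal stops exactly where the actual difficulty lies. You announce a ``careful combinatorial analysis of the dual graphs'' and a claim that $F$ ``must permute these divisors up to iteration while preserving the induced polarization'', but neither of these is carried out or justified. The map $F$ on $Y$ is only rational, so there is no a priori reason it acts by permutation on any collection of curves; and the decomposition you describe of the exceptional locus into $(-1)$-curves from $\beta$ and Hirzebruch--Jung strings from $\alpha$ is not accurate for an arbitrary common resolution of $\mu$, where further blowups may be needed and the two exceptional loci may overlap. So as written this is a plan with an unfilled gap, not a proof.

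The paper's argument is quite different and avoids resolutions altogether. It works directly with divisorial valuations and two dynamical facts about $\psi_0$: preimages of any point are Zariski dense (since $e(\psi)=\lambda(\psi)^2$ prevents $\psi$ from being a product with an automorphism factor), and no curve on $T/G$ is totally invariant. For regularity of $\mu$: if some $p\in T/G$ blows up to a curve $E\subset\PP^2$, pick a preimage $q$ of $p$ under $\psi_0^M$ at which $\mu$ is a local isomorphism, and push the corresponding divisorial valuation forward; on the $\PP^2$ side one obtains a valuation centered at a point, while it must equal $\mathrm{ord}_E$, a contradiction. For non-contraction: if $\mu$ contracted a curve, the absence of totally invariant curves for $\psi_0$ produces a curve $C$ with $\psi_0(C)$ contracted but $C$ not, and then $\phi(\mu(C))$ is simultaneously a curve and a point. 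This valuative/dynamical route is short and sidesteps the combinatorics you propose entirely; if you want to complete your approach, you would need to supply the missing analysis, and it is not clear the polarization constraint alone suffices without invoking something like the dynamical facts the paper uses.
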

\begin{proof}[Proof of Lemma~\ref{L:less-except}]
Denote by $\psi_0$ the map induced by $\psi$ on $T/G$, and by $\phi := \mu \circ \psi_0 \circ \mu^{-1}$. The latter is an endomorphism of $\PP^2$ of degree $d\ge2$.

Note first that $T/G$ is a rational (normal) surface, and the field of meromorphic functions $\C(T)$ has transcendance degree $2$, hence $T$ is projective.

Suppose by contradiction that $\mu$ is not a regular map. Since $T/G$ is normal there exists a point $p$  whose total image under $\mu$ is a curve in $\PP^2$ by Zariski's main theorem, see e.g. \cite[Theorem V.5.2]{hartshorne}. In other words, 
we can  find a divisorial valuation $\nu_p$ on $\C(T/G)$ centered at $p$ whose image $\mu_* \nu_p$ is a divisorial valuation proportional to the order of vanishing $\ord_E$ along an irreducible curve $E \subset \PP^2$.

Observe that the torus map $\psi$ satisfies $e(\psi) = \la(\psi)^2$, hence cannot be factorized as a product map with one factor being an automorphism. In particular, the set of all preimages of $p$ by $\psi_0$ is Zariski dense in $T/G$. We may thus find a point $q \in T/G$ such that $\psi_0^M(q) =p$ for some positive integer $M$ and $\mu$ is a local biholomorphism at $q$. Since $\psi_0^M$ is a finite map, we can find a divisorial valuation $\nu_q$ on $\C(T/G)$ centered at $q$, such that $(\psi_0^M)_*\nu_q = \nu_p$. On the other hand $\mu_* \nu_q$ is centered at the point $\mu(q)$ in $\PP^2$, hence the valuation $\f^M_*\mu_* \nu_q$ is centered at the point $\f^M(\mu(q))$. This contradicts the equality $\f^M_*\mu_* \nu_q = \mu_* (\psi_0^M)_* \nu_q = \ord_E$.

Let us now prove that $\mu$ does not contract any curve. This will end the proof since
any birational regular finite map from a normal variety to another is necessarily an isomorphism.

We proceed by contradiction. Denote by $\mathcal C$ the (non-empty) set of points where $\mu$ is not locally finite, i.e. the union of all irreducible curves that are contracted to a point by $\mu$.  Since no curve in $T/G$  is totally invariant  by $\psi_0$ (otherwise its preimage in $T$ would also be), it follows that there exists an irreducible curve $C \subset T/G$ such that 
$\psi_0(C) \in \mathcal C$ but $C \not\subset \mathcal C$. The contradiction arises from the fact that  $\mu(\psi_0(C))$ is a point in $\PP^2$ equal to the irreducible curve $\phi (\mu (C))$. 
\end{proof}

It remains to see that $N\le 6$. To do so  we apply Theorem~\ref{TI:A}. Observe that we can apply Remark \ref{rem:kod0} so that $\f$ lifts to an affine map $A$ on $\C^2/\Gamma$ such that $A^N$ is a homothety.
We conclude by observing that the arguments of the eigenvalues of $A$ are roots of unity whose degree over $\Q$ is $1$, $2$ or $4$.

\subsection{The case $\kappa_0(5)$: quotients of monomial maps}
Suppose we are in case $\kappa_0(5)$. The situation is as follows: $\hat{X}$ is a smooth projective toric surface, $\hat{\phi}_N$ is a monomial map associated to a $2\times 2$ matrix $M$ with integral coefficients composed with a translation, and $G$ is a finite subgroup of
$\aut ((\C^*)^2 )$. Recall that the latter group is isomorphic the semi-direct product of $\GL(2,\Z)$ by $(\C^*)^2$.
There also exists a birational map $\mu : \hat{X}/ G \dto \PP^2$ conjugating the map induced by $\hat{\phi}_N$ on $(\mathbb C^*)^2/G$ to the iterate our endomorphism $\phi^N$ on the projective space.

From~\cite[Theorem~4.3]{FP}, we know that $M$ is diagonalizable. Both eigenvalues should be equal because $\hat{\f}_N$ preserves a
one-parameter family of foliations. In other words, in suitable coordinates $(x,y)$ (and replacing $\hat{\f}_N$ by a suitable power  if necessary) we have
$\hat{\f}_N(x,y) = (x^{d^N}, y^{d^N})$.

We now observe that Lemma~\ref{L:less-except} adapted to the present situation yields the following

\begin{lemma}\label{L:less-except2}
Suppose $G$ is a finite subgroup of $\aut((\mathbb C^*)^2)$, and $\psi: (\mathbb C^*)^2 \to (\mathbb C^*)^2$ is any holomorphic map on $(\mathbb C^*)^2$.
If there exists a birational map  $\mu: (\mathbb C^*)^2/G \dto \PP^2$ conjugating the map induced by $\psi$ on $(\mathbb C^*)^2/G$ to a non-invertible endomorphism of the projective space of degree at least $2$, then $\mu$ is an isomorphism from $(\mathbb C^*)^2/G$ onto its image. In particular the quotient space $(\mathbb C^*)^2/G$ is smooth.
\end{lemma}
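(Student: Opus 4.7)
The plan is to mirror the two-step strategy of the proof of Lemma~\ref{L:less-except}, with modifications needed when the source surface $(\C^*)^2/G$ is only quasi-projective, and with one new input: the absence of totally invariant curves in $(\C^*)^2$ for the monomial map at hand.

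\emph{Step 1 (regularity of $\mu$).} I would reproduce the valuative argument of Lemma~\ref{L:less-except} essentially verbatim. Assume by contradiction that $\mu$ is not regular at a point $p \in (\C^*)^2/G$. Zariski's main theorem on the normal surface $(\C^*)^2/G$ yields a divisorial valuation $\nu_p$ centered at $p$ with $\mu_*\nu_p \propto \ord_E$ for some irreducible $E \subset \PP^2$. Since $\phi$ is a non-invertible endomorphism of $\PP^2$ of degree at least $2$, the backward orbit of $\mu(p)$ under $\phi$ is Zariski-dense, and pulling back by $\mu$ gives a Zariski-dense backward orbit of $p$ under $\psi_0$. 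I would pick $q$ in this orbit with $\mu$ a local isomorphism at $q$ and $\psi_0^M(q) = p$; the finiteness of $\psi_0^M$ (inherited from $\phi^M$) lets me lift $\nu_p$ to a divisorial valuation $\nu_q$ centered at $q$ with $(\psi_0^M)_*\nu_q = \nu_p$. Pushing forward via $\mu$ and invoking $\mu \circ \psi_0 = \phi \circ \mu$ yields $\phi^M_*\mu_*\nu_q \propto \ord_E$, which contradicts the fact that the left-hand side is centered at the point $\phi^M(\mu(q))$.

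\emph{Step 2 (no curve is contracted by $\mu$).} Let $\mathcal{C}$ denote the finite set of irreducible curves of $(\C^*)^2/G$ contracted to a point by $\mu$, and assume $\mathcal{C} \neq \emptyset$. Commutation forces, for each $C \in \mathcal{C}$, that $\psi_0(C)$ is either a point or an element of $\mathcal{C}$. The plan is to produce an irreducible curve $C_0$ outside $\mathcal{C}$ with $\psi_0(C_0) \in \mathcal{C}$: then $\phi(\mu(C_0))$ is a curve in $\PP^2$ (since $\phi$ is finite), while $\mu(\psi_0(C_0))$ is a point---contradiction. If no such $C_0$ exists, then $\mathcal{C}$ is backward-invariant and a pigeonhole argument on the finite set $\mathcal{C}$ produces an irreducible curve totally invariant under some iterate $\psi_0^k$, lifting to a $\psi^k$-totally invariant curve in $(\C^*)^2$. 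In the application, $\psi$ is conjugate to the scalar monomial map $(x,y) \mapsto (x^{d^N}, y^{d^N})$, and I would rule out such a curve by a Newton-polytope computation: a defining Laurent polynomial $f$ would have to satisfy $f(x^{d^N}, y^{d^N}) = u \cdot f^k$ for a monomial unit $u$, and comparing Newton polytopes forces $k = d^N$ with $u$ a constant; a descent on the size of $\mathrm{Newt}(f)$ then shows $f$ itself is a monomial and defines no curve.

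\emph{Step 3 (conclusion) and main obstacle.} Combining Steps 1 and 2, $\mu$ becomes a regular birational morphism between normal surfaces that contracts no curve, hence is quasi-finite; Zariski's main theorem (or Stein factorization) then identifies $\mu$ with an open immersion onto its image. Consequently $(\C^*)^2/G$ embeds as a Zariski open subset of the smooth variety $\PP^2$, and is in particular smooth. I expect the crux to be Step 2: in Lemma~\ref{L:less-except} the absence of totally invariant curves comes from the structure of linear endomorphisms of abelian surfaces, whereas the analogous statement for scalar monomial endomorphisms of $(\C^*)^2$ must here be supplied by the Newton-polytope descent outlined above.
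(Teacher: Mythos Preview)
Your proposal is correct and follows exactly the approach the paper intends: its proof of Lemma~\ref{L:less-except2} is literally the single line ``Analogous to the proof of Lemma~\ref{L:less-except}'', and you have faithfully reproduced that two-step valuative argument, supplying the one new ingredient (absence of totally invariant curves in $(\C^*)^2$) that the paper leaves implicit. Two small remarks: the finiteness of $\psi_0$ is not ``inherited from $\phi$'' via the birational conjugacy but rather comes directly from the fact that every holomorphic self-map of $(\C^*)^2$ is a monomial map with non-zero determinant under the degree hypothesis; and your Newton-polytope descent, which you carry out only for the scalar map $(x^{d^N},y^{d^N})$, extends to an arbitrary monomial map satisfying $e=\lambda^2$ by restricting to the rational eigendirection of $M^T$ and reducing to the one-variable identity $g(u^k)=c\,g(u)^k$.
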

\begin{proof}
Analogous to the proof of Lemma \ref{L:less-except}.
\end{proof}

Denote by  $U\subset \PP^2$ the image of $(\C^*)^2/G$ under $\mu$. This is a $\f$-totally invariant Zariski dense open subset of $\PP^2$. Its complement is a finite set of irreducible curves $L_1, \ldots , L_r$ (by \cite{CLN,SSU} we have $r \le 3$ and the $L_i$'s are lines but we shall not use this fact). 

Let us analyze the (rational) map $\pi : \hat{X} \dto \PP^2$ that semi-conjugate $\hat{\f}^N$ to
$\phi^N$. This map induces a regular proper finite map from $(\C^*)^2 = \pi^{-1}(U)$ onto $U$.

We claim that there exists a toric (possibly singular) variety $\bar{X}$ birational to $\hat{X}$ such that $G$ acts as a group of automorphisms on $\bar{X}$ and the new projection $\bar{\pi} : \bar{X} \to \PP^2$ is regular and finite. 

\smallskip

Granting this claim we conclude the proof. We first observe that any element $g \in G$ induces an automorphism on $\bar{X}$ since it permutes the fibers of $\bar{\pi}$ that is finite. 
It follows that $\PP^2 \simeq \bar{X} /G$.

Finally we need to argue that one can take $N \le 3$. 
We apply as above Remark \ref{rem:kod0}: we know that $\f$ lifts to an monomial map on $(\C^*)^2$ associated to a $2\times 2$ matrix $A$ such that $A^N = d^N\, \mathrm{id}$.
Now the arguments of the eigenvalues of $A$ are roots of unity of degree over $\Q$ less than  $2$ that is root of order  less than $3$. It follows that we may always take $N \le 3$.

\smallskip

Let us now prove our claim.  As in the proof of Lemma \ref{L:less-except} it is convenient to use the notion of divisorial valuations.

To each irreducible component $L$ of the complement of $U$ in $\PP^2$ corresponds a divisorial valuation $\ord_{L}$ defined on the field of rational functions of $\PP^2$.  The field  $\C (\hat{X})$ is a finite field extension of $\C( \PP^2)$, 
so that the set $\cV$ of all valuations on $\C (\hat{X})$ extending one of the divisorial valuations
$\ord_{L}$  is a finite set by \cite[Ch. VI \S 11 Theorem 19]{ZS}. This set is $G$-invariant and since the complement of $U$ is totally invariant by $\f$ is follows that $\cV$ is also totally invariant by the action of the monomial map $\hat{\f}^N$.
\begin{lemma}\label{L:pure-toric}
Suppose $\nu$ is a divisorial valuation on $\C(x,y)$ that is totally invariant by the monomial map $(x,y) \mapsto (x^d, y^d)$. Then $\nu$ is a monomial valuation.
\end{lemma}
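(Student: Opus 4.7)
The plan is to compare $\nu$ with the monomial valuation $\mu$ having the same values on $x$ and $y$, and to establish equality by restricting the invariance of $\nu$ to a well-chosen one-variable subfield.

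First, since $\nu$ has rank one, total invariance under $\sigma(x,y) = (x^d, y^d)$ gives a functional equation $\nu(\sigma^* f) = \lambda \nu(f)$ for some $\lambda > 0$. Testing on $f = x$ forces $\lambda = d$ (assuming $\nu(x) \neq 0$, the case $\nu(y) \neq 0$ being symmetric; these cover every divisorial $\nu$ arising in $\mathcal V$). Set $a = \nu(x)$, $b = \nu(y)$, and let $\mu$ be the monomial valuation with $\mu(x) = a$, $\mu(y) = b$. One always has $\nu \ge \mu$, and the goal is equality. When $a/b \notin \Q$ the minimum weight on any finite support is uniquely attained, so equality is automatic.

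The heart of the argument is the case $a/b \in \Q$. Write $(a,b) = c(r,s)$ with $\gcd(r,s) = 1$, and set $t := x^s y^{-r}$, so that $\nu(t) = 0$ and $\sigma^* t = t^d$. The invariance relation restricted to $\C(t) \subset \C(x,y)$ reads $\nu(h(t^d)) = d\,\nu(h(t))$ for all $h \in \C(t)$. Now $\nu|_{\C(t)}$ is a rank-one valuation of $\C(t)/\C$, hence either trivial or of the form $c'\ord_{t_0}$ with $t_0 \in \PP^1$ and $c' > 0$; since $\nu(t^{\pm 1}) = 0$, one has $t_0 \in \C^*$. Testing the invariance on $h(t) = t - t_0$ rules out the non-trivial case: $\nu(t - t_0) = c'$ whereas $\nu(t^d - t_0) = c'\,\ord_{t_0}(t^d - t_0)$ is either $c'$ (when $t_0^{d-1} = 1$, since $t_0$ is then a simple root of $t^d - t_0$) or $0$, so the required equality $\nu(t^d - t_0) = d c'$ is incompatible with $d \ge 2$ and $c' > 0$. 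Hence $\nu|_{\C(t)} \equiv 0$.

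To conclude, I use an initial-form decomposition. Given a Laurent polynomial $f$ of $\mu$-weight $m$, the coprimality $\gcd(r,s) = 1$ allows one to write the $\mu$-initial form $f_m$ as $x^{i_0} y^{j_0}\tilde g(t)$ for some $\tilde g \in \C[t, t^{-1}]$. The vanishing of $\nu|_{\C(t)}$ then gives $\nu(f_m) = m$, while $\nu(f - f_m) \ge \mu(f - f_m) > m$, and the ultrametric property yields $\nu(f) = m = \mu(f)$. The main conceptual step I foresee is identifying the one-variable parameter $t$ so that the invariance of $\nu$ restricts cleanly; once this reduction is made, the arithmetic constraint from testing on $t - t_0$ is rigid enough to force triviality on $\C(t)$, and the initial-form step is then routine.
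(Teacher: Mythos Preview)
Your argument is correct and reaches the same obstruction as the paper, but by a more algebraic route. The paper's proof is geometric: it chooses a smooth toric surface $X$ whose fan contains the ray $\R_+(p,q)$ with $(p,q)=(\nu(x),\nu(y))$, lets $E$ be the corresponding torus-invariant divisor, and shows that if $\nu \neq \ord_E$ then the center of $\nu$ on $X$ is a closed point of $E$ which is not torus-invariant; since the map induced on $E\cong\PP^1$ is $z\mapsto z^d$, such a point cannot be totally invariant, a contradiction. Your parameter $t=x^s y^{-r}$ is exactly a coordinate on this divisor $E$, and the statement that $\nu|_{\C(t)}$ is nontrivial at some $t_0\in\C^*$ is equivalent to the center of $\nu$ being the point $t_0\in E$; your functional-equation test $\nu(t^d-t_0)=dc'$ versus $\ord_{t_0}(t^d-t_0)\le 1$ is the valuation-theoretic translation of ``$t_0$ is not totally invariant under $z\mapsto z^d$''. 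So the two proofs encode the same mechanism. Yours is self-contained and avoids toric machinery (and in fact uses only the invariance relation $\nu\circ\sigma^*=d\,\nu$, not the full strength of total invariance), while the paper's makes the geometry of the center transparent. One small remark: both arguments tacitly set aside the case $\nu(x)=\nu(y)=0$ (you by invoking the ambient set $\mathcal V$, the paper by taking $p,q$ coprime); this is harmless since in that case the center of $\nu$ would lie in $(\C^*)^2$, and no curve or point there is totally invariant under $(x,y)\mapsto(x^d,y^d)$ for $d\ge 2$.
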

In other words, each valuation $\nu$ lying in $\cV$  is determined by its value $(p_\nu,q_\nu)\in\Z^2$ on the monomials $(x,y)$ such that 
$$
\nu \left(\sum a_{kl} x^k y^l\right) = \min \{ k p_\nu + l q_\nu | \, a_{kl} \neq 0 \}~. 
$$
Recall that a toric surface is determined by a fan which is a collection of strictly convex rational cones in $\R^2$, see \cite{oda}. Consider the complete toric surface $\tilde{X}$ associated to the minimal $G$-invariant fan refining the one of $\hat{X}$ and including the rays $ \R_+\,(p_\nu,q_\nu)$ where $\nu$ ranges over all divisorial valuations in $\cV$. By construction a monomial divisorial valuation with weights $(p,q)$ on $x$ and $y$ has codimension $1$ center iff the ray $\R_+ (p,q)$ belongs to the fan of $\tilde{X}$.

Since the fan of $\tilde{X}$ is $G$-invariant, the group $G$ lifts as a  group of automorphisms of $\tilde{X}$, and we get a natural projection map $\tilde{X} \to \tilde{X}/G$ together with a birational map  $\tilde{\mu} :  \tilde{X}/G\dto \PP^2$.
We claim that the latter one is regular.

Let us shows that $\tilde{\mu}$ is regular. Since $\tilde{X}$ is normal the quotient space $\tilde{X}/G$ is also normal. If $\tilde{\mu}$ is not regular then we could find a divisorial valuation $\nu$ centered at a closed point $p\in 
 \tilde{X}/G$ whose image under $\tilde{\mu}$ is a divisorial valuation with codimension $1$ center in $\PP^2$. But $\tilde{\mu}$ is an isomorphism above $U$ hence 
$\tilde{\mu}_* \nu$ is proportional to $\ord_{L}$ for some irreducible compoenent $L$ of $\PP^2 \setminus U$. Pick any divisorial valuation $\tilde{\nu}$ on $\C( \tilde{X})$ that is mapped to $\nu$. Since the projection map $\tilde{X} \to \tilde{X}/G$ is finite, the center of $\tilde{\nu}$ is also a closed point in $\tilde{X}$. But $\tilde{\nu}$ belongs to $\cV$ hence
is a monomial valuation whose weights corresponds to a ray lying in the fan of $\tilde{X}$. 
But this means that the center of $\tilde{\nu}$ is a curve in $\tilde{X}$, a contradiction.

We have thus proved that the natural map  $\tilde{\pi} :  \tilde{X} \to \PP^2$ is regular. 
However in general the map $\tilde{\pi}$ is not finite. 
Consider the Stein factorization of $\tilde{\pi}$
\[
 \xymatrix{
\tilde{X} \ar[r]^{\varpi} \ar@/_1pc/[rr]_{\tilde{\pi}}
&
\bar{X} \ar[r]^{\bar{\pi}} 
&
\PP^2
} \,
\]
where $\varpi$ is a birational morphism, $\bar{\pi}$ is a finite map, and $\tilde{\pi} = \bar{\pi} \circ \varpi$. Consider the (finite) set of curves that are contracted by $\tilde{\pi}$ (this is the same as the ones contracted by $\varpi$). The key observation is that these curves are all torus-invariant because $\tilde{\pi}$ is a finite covering above $U$. It follows that $\bar{X}$ inherits  a unique structure of toric variety such that $\varpi$ is equivariant. 

This concludes the proof of our claim. The proof of Theorem~\ref{TI:C} is complete.
\qed

\begin{proof}[Proof of Lemma \ref{L:pure-toric}]
Pick any divisorial valuation $\nu$ on $\C(x,y)$. Write $p= \nu(x)$, $q = \nu(y)$. Since $\nu$ is divisorial, we may assume that $p$ and $q$ are coprime integers possibly replacing $\nu$ by a multiple.

Let $X$ be any smooth complete toric surface whose associated fan contains the ray $R= \R_+ (p,q)$, and let $E$ be the irreducible torus-invariant divisor associated to the ray $R$.
The valuation $\ord_E$ is the monomial valuation with weight $p$ on $x$ and $q$ on $y$.

Let us now consider the center $Z$ of $\nu$ in $X$. 
We claim that if $Z\neq E$ then it is a closed point lying on $E$ which is not torus-invariant. In other words $Z$ does not belong to any other irreducible torus-invariant curves.

Grant this claim. If $Z=E$, then $\nu$ is equal to $\ord_E$ and we are done. Otherwise $Z$ is a point which is not torus-invariant.  Observe that the map $(x^d,y^d)$ induces an endomorphism on $X$ that preserves the torus-invariant curve $E$ such that the torus-invariant points of $E$ are totally invariant by $f|_E$. 
In other words $E$ is a rational curve, and there exists a projective coordinates such that 
$f|_E(z) = z^d$ and $Z \neq 0,\infty$. It follows that $Z$ cannot be totally invariant, hence $\nu$ also, a contradiction. 

To complete the proof of the lemma, it suffices to justify our claim. 
Since $\nu$ is totally invariant, its center also and this implies $Z$ to be included in the union of all  torus-invariant curves. Pick any irreducible torus-invariant curve $E'$ and let $p'= \ord_{E'}(x)$, $q'= \ord_{E'}(y)$. When $E'\neq E$ the vectors $(p',q')$ and $(p,q)$ are not 
lying in the same ray, so that we can find $a,b \in \Z$ such that 
$p' a  + q' b > 0 > pa + qb$. It follows that $ \ord_{E'}(x^a y^b)> 0 > \nu (x^ay^b)$ hence
$Z$ is not included in $E'$. This proves our claim. 
\end{proof}

\section{Endomorphisms preserving a $2$-web}\label{S:ProofE}
In this section we prove Theorem~\ref{TI:E}.

Recall that our aim is to classify endomorphisms of $\PP^2$ preserving a $2$-web.
More precisely we fix an endomorphism $\f$ of $\PP^2$ of degree $d \ge2$ preserving 
a (possibly reducible) $2$-web $\mathcal W$ such that $\f$ does not preserve any other web. 

\begin{lemma}\label{L:degree1}
If $\cW$  is a $2$-web invariant by an endomorphism $\f : \PP^2 \to \PP^2$ of degree at least two
which does not preserve any other web then  the degree of $\cW$ is at most one.
\end{lemma}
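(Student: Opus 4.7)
The plan is to argue by contradiction: since Proposition~\ref{p:deg-web} already gives $\deg(\mathcal W)\le 2$, assume $\deg(\mathcal W)=2$ and produce a second $\f$-invariant $2$-web, contradicting the hypothesis. I will split into the reducible and irreducible cases of $\mathcal W$.

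Reducible case: write $\mathcal W=\mathcal F_1\boxtimes\mathcal F_2$. The endomorphism $\f$ either fixes each $\mathcal F_i$ or swaps them, so some iterate $\f^n$ with $n\in\{1,2\}$ preserves each $\mathcal F_i$ individually. Applying the foliation analogue of Proposition~\ref{p:deg-web}, namely \cite[Proposition~2.2]{FP}, to $\f^n$ and each $\mathcal F_i$ yields $\deg(\mathcal F_i)\le 1$. From $N\mathcal W=N\mathcal F_1\otimes N\mathcal F_2$ combined with the formula $N\mathcal F=\mathcal O_{\PP^2}(\deg\mathcal F+2)$, I read off the additivity $\deg(\mathcal W)=\deg(\mathcal F_1)+\deg(\mathcal F_2)$. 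Hence $\deg(\mathcal W)=2$ would force $\deg(\mathcal F_1)=\deg(\mathcal F_2)=1$, i.e.\ equality in \cite[Proposition~2.2]{FP} for both foliations. Plugging this equality into the classification of endomorphisms of $\PP^2$ preserving a foliation (\cite[Theorem~C]{FP}), each $\mathcal F_i$ arises as a push-forward of a linear or monomial foliation on a torus or a toric surface, and in both situations $\f^n$ preserves a positive-dimensional family of foliations. Choosing in that family a third foliation $\mathcal F_3\notin\{\mathcal F_1,\mathcal F_2\}$ and symmetrising under the $\f$-action, the web $\mathcal F_1\boxtimes\mathcal F_3$ (or a suitable $\f$-average of it) is an $\f$-invariant $2$-web distinct from $\mathcal W$, contradicting the hypothesis.

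Irreducible case: apply Proposition~\ref{p:weblift} to obtain a Galois double cover $\mu:\hat X\to\PP^2$ with covering involution $\iota$ and a foliation $\hat{\mathcal F}$ satisfying $\mu^*\mathcal W=\hat{\mathcal F}\boxtimes\iota^*\hat{\mathcal F}$, such that a power $\hat\f^N$ of the lift preserves $\hat{\mathcal F}$. First I would show that $\hat{\mathcal F}$ must have a rational first integral: otherwise Proposition~\ref{P:pencil-fol} together with \cite[Theorems~4.3,~4.4]{FP} forces $\hat\f^N$ to preserve a one-parameter family of foliations on $\hat X$, and pushing down by $\mu$ a non-$\iota$-invariant member of this family produces a second $\f$-invariant $2$-web on $\PP^2$, contradicting the hypothesis. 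Once $\hat{\mathcal F}$ admits a first integral, I run the end of the proof of Theorem~\ref{TI:B} to conclude that $\mathcal W$ is algebraic and $\f$ is semi-conjugate through a rational map $\pi:\PP^2\dashrightarrow\PP^2$ to a Ueda map on the target. Using the symmetric-function identification $(\PP^1\times\PP^1)/\mathfrak S_2\simeq\PP^2$, the Ueda web $\mathcal W_U$ has all its leaves given by lines, hence $\deg(\mathcal W_U)=0$. A direct degree computation using the formula $N\mathcal W=\pi^*N\mathcal W_U-\sum m(C)\,C$ from Lemma~\ref{L:deg-web}, combined with Proposition~\ref{p:deg-web} applied to the Ueda endomorphism along $\pi$, then forces $\deg(\mathcal W)\le 1$, contradicting the assumption $\deg(\mathcal W)=2$.

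The main obstacle is the reducible-case's final step: extracting the extra invariant foliation $\mathcal F_3$ from the equality case of \cite[Proposition~2.2]{FP} uniformly across the list of \cite[Theorem~C]{FP}, and dually, in the irreducible case, carefully tracking the contribution of the ramification divisor to the degree of $\mathcal W$ under the semi-conjugation $\pi$.
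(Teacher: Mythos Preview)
Your overall strategy---reduce to $\deg(\mathcal W)=2$ via Proposition~\ref{p:deg-web} and derive a contradiction---is the right one, and your reducible case is close in spirit to the paper's. But the assertion that the equality case of \cite[Proposition~2.2]{FP} forces $\f^n$ to preserve a positive-dimensional family of foliations is false: cases~(4) and~(5) of \cite[Theorem~C]{FP} produce degree-$1$ invariant foliations \emph{without} any such family (compare Theorem~\ref{TI:E}~(iii)--(iv), where the base map is explicitly assumed not monomial or Chebyshev). The paper instead observes that under the hypothesis only cases~(1),~(4),~(5) of \cite[Theorem~C]{FP} can arise, that cases~(4) and~(5) are exclusive, and hence at least one $\mathcal F_i$ must fall in case~(1), i.e.\ have degree~$0$.

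The irreducible case has a more serious gap. Your final step---bounding $\deg(\mathcal W)$ through $N\mathcal W=\pi^*N\mathcal W_U-\sum m(C)\,C$---does not go through: the semi-conjugacy $\pi$ is merely a dominant rational map of uncontrolled degree, the identity $\mathcal W=\pi^*\mathcal W_U$ itself requires justification, and nothing forces $\sum m(C)\deg(C)$ to be large enough to push $\deg(N\mathcal W)$ down. The paper takes a completely different route. By Theorem~\ref{TI:B} the leaves of $\mathcal W$ are rational, hence parametrised by an irreducible curve $C$, and $\f$ induces a self-map $f\colon C\to C$. Assuming $\deg(\mathcal W)=2$, the equality clause of Proposition~\ref{p:deg-web} combined with the local computation in Remark~\ref{R:contachave}~(4) shows that whenever a leaf $\bar D_q$ of $\mathcal W$ lies in the critical locus of $\f$, both $\bar D_q$ and $\f(\bar D_q)$ are \emph{completely} $\mathcal W$-invariant. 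Translated to $C$, this says that every critical point of $f$ lies in a fixed finite set $\mathcal E'$ depending only on the fibration and on $\mathcal W$, not on $f$. Hence all iterates $f^n$ have critical locus contained in $\mathcal E'$, which forces $f$ to be a monomial map; but then $\f$ preserves a pencil of webs, contradicting the hypothesis. This dynamical argument on the parameter curve---bounding the post-critical set of $f$ uniformly in the iterate---is the key idea you are missing, and it is what replaces your unworkable degree comparison along $\pi$.
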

\begin{proof}
	Let us first treat the reducible case. 
	When $\mathcal W = \cF_1 \boxtimes \cF_2$ is reducible, then we may apply \cite[Theorem~C]{FP} to both foliations $\cF_1$ and $\cF_2$. Since $\f$ does not preserve any other web we must fall into cases (1), (4) or (5) of op. cit. Observe that the two cases (4) and (5) are exclusive. It follows that both foliations have  degree $0$ or $1$ and one of them has degree $0$, whence  $\deg (\mathcal W) \le 1$ in the case. 
	
	In the sequel we shall assume that $\mathcal W$ is irreducible. It follows from Theorem \ref{TI:B} that all leaves of $\cW$ are rational curves.

	Let $e\ge1$ be the degree of a general leaf of $\mathcal W$, and denote by  
	$C \subset \mathbb P \left( H^0 \left(\PP^2, \mathcal O_{\PP^2}(e)\right)\right)$ the irreducible curve  parameterizing the leaves of the invariant web. 
	Denote by $Y \subset C\times \PP^2$ the incidence variety consisting of pairs $(D,p)$ such that 
	the point $p$ belongs to the curve $D$. The natural rational map $\mu :  C \times \PP^2 \dto \PP (T\PP^2)$
	sending  a pair $(D,p)$ to the pair $(p, T_pD)$ induces a 
	birational morphism from $Y$ to the graph $\mathbb W$ of the web in $\PP (T\PP^2)$. 
	
	Now denote by $\hat{\f}$ the map induced by $\f$ on $\mathbb W$; by $\psi$ the one induced on $Y$ and by $f$ the map induced on $C$ so that the following commutative diagram holds: 
	\[
	\xymatrix{
		& {\mathbb W} \ar@(r,u)[]_{\hat \f} \ar[d]^{2:1} 
		\\
		{Y} \ar@(u,l)[]_{\psi} \ar@{-->}[ur]^{\mu} \ar[d] \ar[r]_{2:1}
		&
		\PP^2 \ar@(r,d)[]^{\f}
		\\
		C\ar@(u,l)[]_{f}&} \,
	\]
	Let $\mathcal E$ denote the (finite) set of critical values of the fibration $Y \to C$.
	Pick any critical point $q\in C\setminus {\mathcal E}$ for $f$. The corresponding curve
	$D_q$ is then critical for $\psi$. Its image $\bar{D}_q$ in $\PP^2$ is thus a critical curve for $\f$ that is invariant by $\mathcal W$. Since $m(\bar{D}_q) = 2 \ord_{\bar{D}_q}(\Delta_\f)$ by Proposition \ref{p:deg-web} we are in Case (4) of Remark \ref{R:contachave}.
	It follows that $\bar{D}_q$ and $\f( \bar{D}_q)$ are totally invariant by $\mathcal W$.
	
	We have proved that the critical set of $f$ is included in the union of $\mathcal E$ together with the finite set of points $q$ for which $\bar{D}_q$ is invariant by $\mathcal W$.
	Denote by $\mathcal E'$ this union and observe that this finite set does not depend on $f$ but only on the fibration $Y \to C$ and the web $\mathcal W$.
	It follows that the critical set of all iterates of $f$ is included in $\mathcal E'$ hence
	$f$ is a monomial map. 
	
	This is the desired contradiction since otherwise $\f$ preserves a pencil of webs. 
\end{proof}

\begin{remark}
There are irreducible  $2$-webs of degree $2$ on $\mathbb P^2$ which are preserved by endomorphisms of degree greater than one. For instance,
the reducible $2$-webs on $\mathbb P^1 \times \mathbb P^1$ defined by $[ (dx/x + \lambda dy/y) ( dx/x + \lambda^{-1} dy/y )]$, $\lambda \in \mathbb C - \{ 0,1,-1\} $
are preserved by endomorphisms of the form $(x,y) \mapsto (x^d, y^d)$, and their quotient by the involution $(x,y)\mapsto (y,x)$ give rise to  examples
on $\mathbb P^2$.
\end{remark}

The previous lemma implies that $\mathcal W$ has degree $0$ or $1$.

Suppose first that the degree of $\mathcal W$ is $0$.
When $C$ is a smooth conic, then $\mathcal W$ is the image of (either one of) the two rational fibrations on $\PP^1\times \PP^1$ under the $2:1$ cover $\mu: \PP^1\times \PP^1\to \PP^2$ induced by the involution $(x,y)\mapsto (y,x)$, 
and $\PP^1\times \PP^1$ is isomorphic to the graph of $\mathcal W$ 
in  $\PP (T \PP^2)$. It follows that $\f$ lifts as an endomorphism to $\PP^1\times \PP^1$, and thus $\f$ is a Ueda example. 

\smallskip

When $C$ is reducible, then $C$ is a union of two lines and $\f$ preserves or permutes two pencils of lines in $\PP^2$. In suitable affine coordinates, then $\f(x,y) = (P(x), Q(y))$ or $(Q(y), P(x))$ for two polynomials of the same degree. Moreover these polynomials are neither conjugated to a monomial nor to a Chebyshev since otherwise $\f$ would preserve another web. 

\smallskip

Let us now assume that $\mathcal W$ has degree $1$. 
When $\mathcal W$ is reducible then we can apply \cite[Theorem~C]{FP}. Since at least one of the two foliations has degree $1$ we are in one of the exclusive case (4) or (5) of op. cit. 
It is then not difficult to conclude that we are in cases (iii) and (iv) of Theorem~\ref{TI:E} respectively. Observe that the rational map $\theta \mapsto \theta^d / (\prod_{i=1}^l (1+ c_i \theta) )^{p+q}$ appearing in case (iii) is the map induced on the base of the invariant rational fibration $\{x^py^q = \theta\}$ (in the notation of op. cit.). 
This rational map admits a totally invariant point hence is conjugated to a polynomial. When this polynomial is conjugated to a monomial or to a Chebyshev map then $\f$ preserves a web different from $\cW$ that is excluded from our assumption. Case (iv) is treated similarly.

\smallskip

Finally let us assume that $\deg (\mathcal W) =1$ and $\mathcal W$ is irreducible. 
We may then consider its dual foliation $\cF$ in  $\check{\PP}^2$ as defined in \cite[\S 3]{Marin}, see also  \cite[Section 1.4]{invitation}. This foliation is characterized by the fact that its leaf through the point $L\in \check{\PP}^2$ is tangent to the line associated  to the unique point $p\in L \subset \PP^2$ where $L$ and the web $\mathcal W$ admits a point of tangency. For a generic line this point is unique since the degree of $\mathcal W$ is one. A direct computation also shows  that the degree of $\mathcal F$ is $2$ because $\mathcal W$ is a $2$-web, see \cite{Marin}.
\begin{lemma}\label{L:discr}
Let $\cW$ be a $2$-web  on $\PP^2$. If $\cW$ is invariant by an endomor\-phism $\f: \PP^2 \to \PP^2$ of degree at least two then
its discriminant $\Delta(\cW)$ is invariant by $\cW$. 
\end{lemma}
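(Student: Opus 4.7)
My plan is to derive a pullback formula for the discriminant divisor and then run a backward-orbit degree argument. The key identity, obtained from the transformation rules \eqref{E:TransDis} (applied with $\deg P = k = 2$) combined with $K_X = \f^*K_X + \Delta_\f$ and the definition of $\f^*\cW$ (dividing $\f^*\omega$ by a section with divisor $\sum_C m(C)\, C$), is
\[
\f^*\div\Delta(\cW) \;=\; \div\Delta(\cW) \;+\; 2\sum_C \bigl(m(C) - \ord_C(\Delta_\f)\bigr)\, C.
\]

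Let $S$ denote the set of irreducible components of $\Delta(\cW)$ that are not $\cW$-invariant, and suppose for contradiction $D \in S$. Since $\f^*\cW = \cW$, the map $\f$ sends $\cW$-leaves to $\cW$-leaves, so $C$ is not $\cW$-invariant whenever $\f(C)$ is not. Hence for any irreducible curve $C'$ with $\f(C') = D$, Lemma~\ref{L:deg-web}\,(1) gives $m(C') = 0$. The left-hand side of the pullback formula at $C'$ reduces to $\ord_D(\Delta(\cW)) \cdot \ord_{C'}(\f^*D)$, because $D$ is the unique component of $\Delta(\cW)$ visited by $C'$. Therefore
\[
\ord_{C'}(\Delta(\cW)) \;=\; \ord_D(\Delta(\cW))\cdot \ord_{C'}(\f^*D) + 2\ord_{C'}(\Delta_\f) \;\geq\; \ord_D(\Delta(\cW)) \;\geq\; 1,
\]
so $C' \in \Delta(\cW)$, and by the previous remark $C' \in S$.

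An easy induction on $n$ now shows that every component $C$ of $\f^{-n}(D)$ belongs to $S$ and satisfies $\ord_C(\Delta(\cW)) \geq \ord_D(\Delta(\cW))\cdot \ord_C(\f^{n*}D)$. Summing this bound weighted by $\deg C$ yields
\[
\deg\Delta(\cW) \;\geq\; \sum_{C \subset \f^{-n}(D)} \ord_C(\Delta(\cW))\,\deg C \;\geq\; \ord_D(\Delta(\cW))\cdot d^n\, \deg D,
\]
which is absurd for $n$ large, since the left-hand side is bounded by the fixed quantity $\deg\Delta(\cW) = 2\deg\cW + 2$ (Example~\ref{ex:web in P2}). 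Thus $S = \emptyset$, and every component of $\Delta(\cW)$ is a leaf of $\cW$.

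The main subtlety lies in deriving the pullback formula: one must carefully track how the zero divisor of $\f^*\omega$ (encoded by the $m(C)$'s) contributes when extracting a defining form of $\f^*\cW$, and combine this with the identity $\Delta(\f^*\omega) = \det(D\f)^2 \cdot \f^*\Delta(\omega)$ furnished by \eqref{E:TransDis}. Once that formula is in place, the dynamical argument driving the proof is essentially a counting argument based on the fact that pulling back a component of $\Delta(\cW)$ by $\f^n$ inflates degrees by $d^n$, while the total degree of $\Delta(\cW)$ is fixed.
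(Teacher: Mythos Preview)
Your proof is correct and follows the same dynamical strategy as the paper: show that every preimage of a non-$\cW$-invariant discriminant component is again a non-$\cW$-invariant discriminant component with multiplicity at least that of the original times the local ramification index, then iterate to force the degree of $\Delta(\cW)$ to blow up.

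The difference lies in how the multiplicity bound is established. The paper quotes a local normal form $\cW = [dx^2 + y^m dy^2]$ along a non-invariant discriminant component (from \cite[Lemma~2.1]{Marin}) and computes directly that $\f^*\cW$ has discriminant of order $k(m+2)-2$ along each preimage. You instead derive the global identity $\f^*\Delta(\cW) = \Delta(\cW) + 2\sum_C (m(C) - \ord_C(\Delta_\f))\,C$ from the transformation rules~\eqref{E:TransDis}, the ramification formula, and the definition of $N(\f^*\cW)$; combined with $m(C')=0$ from Lemma~\ref{L:deg-web}(1) this yields $\ord_{C'}(\Delta(\cW)) = k\,\ord_D(\Delta(\cW)) + 2(k-1)$, which is exactly the paper's $k(m+2)-2$. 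So the two computations coincide numerically; your route is more intrinsic and stays entirely within the machinery already set up in Section~\ref{S:basicwebs}, while the paper's is more hands-on but relies on an external normal-form lemma.
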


\begin{proof}
	We proceed by contradiction. Pick  any irreducible component of the discriminant $C \subset \Delta(\cW)$ that is not invariant by $\f$.  Let $C_1$ by any irreducible component  of $\f^{-1}(C)$.
	Since $\f$ is finite we may find local coordinates $(x_1,y_1)$ at a generic point $p\in C_1$,
	and  local coordinates $(x,y)$ at its image $\f(p)\in C$ such that: 
	$C_1 = \{ y_1 =0\}$, $C= \{ y=0\}$, 
	$(x,y)= \f(x_1,y_1) = (x_1,y_1^k)$, and the $2$-web is given by $\cW = [ dx^2 + y^m dy^2]$ for some $m\ge 1$, see \cite[Lemma 2.1]{Marin}.
		
	It follows that  $\f^* \cW$ is defined at $p$ by the symmetric form $dx_1^2  + k^2 y_1^{km + 2k -2} dy_1^2$, hence the vanishing order of the discriminant of $\f^* \cW$ along $C_1$
	equals $k(m+2) -2 \ge 1$. 
	
	Summing up all these contributions over all components of $\f^{-1} (C)$ 
	we conclude that the degree of the discriminant is greater than $\deg(\f^* C) = \deg(\f) \deg(C)$. We get a contradiction by replacing $\f$ by an iterate $\f^n$ and let $n\to \infty$.\end{proof}

Let us see what this information implies for the dual foliation $\cF$.  To that end, recall that an inflection point $p$ of $\cF$ is a point  at which the line tangent to $\cF$ at $p$ is tangent to order at least $\ge 3$ to the leaf of $\cF$ passing through $p$. Since $\cF$ is not a pencil of lines, the set of inflection points is a curve.

Let $C$ be any irreducible component of the inflection curve of $\cF$.
At any point $p\in C\subset \check{\PP}^2$ where the foliation is smooth, we consider the line $L_p$ tangent to $\cF$ at $p$. Since the degree of $\cF$ is $2$, either $L_p$ is the leaf of $\cF$ through $p$, or $L_p$ is tangent to $\cF$ only at $p$ and the order of tangency is exactly equal to $3$. It implies that the  point corresponding  to $L_p$ in $\PP^2$ lies in the discriminant of $\cW$. Moreover the direction of the leaves of $\cW$ at $L_p$ are given by the line associated to $p$ in $\PP^2$.

Suppose that $C$  is not a line. This is equivalent to assume that $C$ is not a leaf of $\cF$. 
Then the set  $\{L_p\}_{p \in C}$ describes an irreducible component $D$ of the discriminant of $\cW$ in $\PP^2$ and the tangent direction of $\cW$ at $L_p$ is given by $p$. On the other hand the tangent direction of $D$ at $L_p$ is associated to the limit point $L_p \cap L_q \in \check{\PP^2}$ when $q \to p$ in $C$, and this limit point cannot be equal to $p$ for a generic choice of point on $C$.  Whence $D$ is not invariant by $\cW$.

Lemma \ref{L:discr} thus implies that every irreducible component of the inflection
curve of $\cF$  is a leaf of the foliation. In the terminology of \cite{Marin} this means that the foliation is  convex.

The classification of degree two  convex foliations defined over $\mathbb R$ was carried out in \cite{SV}.  From that classification we extract the following proposition. 

\begin{prop}
Let $\cF$ be a convex foliation of $\PP^2$ of degree $2$.
Then in suitable projective coordinates $[x:y:z]$  of $\PP^2$, there are only three possibilities:
\begin{enumerate}
\item in the affine chart $\{ z=1\}$  the foliation  is
defined by the $1$-form
\[
\omega = x(x-1) dy - y(y-1) dx ;
\]
\item in the affine chart $\{ z=1\}$  the foliation  is
defined by the $1$-form
\[
\omega = x^2 dy - y^2 dx ;
\]
\item in the affine chart $\{ z=1\}$  the foliation  is
defined by the $1$-form\[
\omega = x^2 dy - dx.
\]
\end{enumerate}
\end{prop}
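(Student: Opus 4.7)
The plan is to reduce to a classification of convex degree 2 foliations on $\PP^2$ based on their invariant line configurations. First I would recall that for a degree $d$ foliation $\cF$ on $\PP^2$ the inflection divisor $I(\cF)$ is cut out by an explicit section of an appropriate line bundle and has degree $3d$, so here $\deg I(\cF) = 6$. Convexity of $\cF$ is equivalent to the statement that every irreducible component of $I(\cF)$ is a line and is also a leaf of $\cF$. Hence $\cF$ supports a configuration of invariant lines whose weighted total (counted with the multiplicities they acquire in $I(\cF)$) equals $6$.

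Next I would enumerate the possible projective configurations of such invariant lines. A degree 2 foliation has $d^2+d+1 = 7$ singularities counted with multiplicity (Baum--Bott), and every singularity lying on an invariant line must be contained in that line; moreover a Darboux-type count limits how many invariant lines can be concurrent at a given singular point. These constraints cut the possibilities down to a short list of configurations modulo $\PGL(3,\C)$. The three configurations that survive correspond to (i) a quadrilateral of invariant lines in general position, producing the foliation $x(x-1)dy - y(y-1)dx$; (ii) three concurrent invariant lines where the triple point contributes the bulk of the inflection divisor, producing $x^2dy - y^2 dx$; and (iii) a single invariant line of maximal inflection multiplicity, producing $x^2 dy - dx$.

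For each configuration I would then solve explicitly for the unique degree 2 foliation with the prescribed invariant data. The space of degree 2 twisted $1$-forms on $\PP^2$ is finite-dimensional, and each prescribed invariant line imposes enough linear conditions that the resulting foliation is determined up to scalar. Normalizing via $\PGL(3,\C)$ by choosing affine charts adapted to the invariant line configuration yields precisely the three $1$-forms listed in the proposition.

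The main obstacle is the combinatorial enumeration of invariant line configurations: one must carefully account for the multiplicities with which lines appear in $I(\cF)$, rule out configurations that would force the foliation to have degree other than $2$, and verify that each degenerate configuration actually produces a convex foliation rather than merely one with some convex components. This is precisely the content of the classification in \cite{SV}, which I would invoke as a black box to complete the case-by-case verification.
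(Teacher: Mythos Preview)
Your approach is essentially the same as the paper's: both observe that the inflection divisor of a degree~$2$ foliation has degree~$6$, that convexity forces each irreducible component to be an invariant line, and then both defer to \cite{SV} for the actual enumeration of possible line configurations. Your mention of Baum--Bott and Darboux-type counts is extra detail the paper does not spell out (it simply notes that the arguments of \cite{SV}, done over~$\R$, carry over to~$\C$), but it does not change the logical structure.

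One point to correct: your geometric descriptions of the three configurations do not match what actually occurs. In case~(1) there are \emph{six} distinct invariant lines, each simple in the inflection divisor (for $x(x-1)dy - y(y-1)dx$ these are $x=0$, $x=1$, $y=0$, $y=1$, $x=y$, and the line at infinity), not a quadrilateral. In case~(2) there are \emph{four} distinct invariant lines, two of multiplicity~$2$ and two simple in $I(\cF)$. In case~(3) there are \emph{two} invariant lines, each of multiplicity~$3$, not a single line. The paper organizes the trichotomy precisely by these multiplicity patterns, and this is what \cite{SV} delivers; your informal descriptions would not by themselves pin down the correct normal forms, so it is important that you are invoking \cite{SV} rather than relying on them.
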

\begin{proof}[Sketch of proof]
First observe that any degree $2$ foliations admits a curve of inflection of degree $6$, see e.g. \cite[\S 3.1]{Marin}. Since the inflection curve is invariant by the foliation, each of its component is a line. Counted with multiplicity it follows that $\cF$ is a degree $2$ foliation that admits exactly $6$ invariant lines. 

Pick any affine coordinates $x,y$ and any polynomial vector field 
$\chi=  p(x,y) \frac{\partial}{\partial x} +q(x,y) \frac{\partial}{\partial y}$ inducing $\cF$ where $p,q \in \C [x,y]$ are polynomials of degree $2$ without common factor.  

When $p$ and $q$ are real then $\chi$ belongs to $\mathrm{QSL}_6$
in the terminology of \cite[notation 3.1]{SV}, so that \cite[Theorem~3.1]{SV} applies. We now observe that the method used in op. cit. applies identically to complex vector fields.
Three different cases appear:
\begin{itemize}
\item
When $\cF$ has $6$ invariant lines each of multiplicity $1$ (this includes cases (1) to (4) of 
\cite[Table 1]{SV}), then $\chi$ can be taken to be equal to  $x(x-1) \frac{\partial}{\partial x} +y(y-1) \frac{\partial}{\partial y}$;
\item 
When $\cF$ has $2$ invariant lines of multiplicity $2$, and $2$ invariant lines of multiplicity $1$ (this includes cases (5) to (9) of 
\cite[Table 1]{SV}), then  $\chi$ can be taken to be equal to $x^2 \frac{\partial}{\partial x} +  y^2 \frac{\partial}{\partial y}$;
\item
When $\cF$ has $2$ invariant lines each of multiplicity $3$ (this includes cases (10) and (11) of 
\cite[Table 1]{SV}), then  $\chi$ can be taken to be equal to $x^2 \frac{\partial}{\partial x} + \frac{\partial}{\partial y}$.
\end{itemize}
Observe that the classification is actually simpler in the complex case since many real normal forms obtained by Schlomiuk and Vulpe are not conjugated over $\R$ but are conjugated over $\C$.  
\end{proof}

We finish the proof of Theorem~\ref{TI:E}  in the three lemmas below.

\begin{lemma}
If $\cW$ is the $2$-web dual to the foliation $[x^2 dy - dx]$ then $\cW$ is not preserved by any endomorphism $\f : \PP^2 \to \PP^2$
with $\deg(\f) \ge 2$.
\end{lemma}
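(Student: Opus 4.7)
My plan is to assume for contradiction that an endomorphism $\f : \PP^2 \to \PP^2$ of degree $d \ge 2$ preserves $\cW$, and to derive a contradiction via an explicit semi-conjugation to a Ueda map that fails to extend to a morphism.

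First, I would describe $\cW$ explicitly. Dualizing the pencil of leaves $\{xy - cxz + z^2 = 0\}$ of $\cF$ via the adjugate matrix, the leaves of $\cW$ form the pencil $\{(cY + Z)^2 + 4XY = 0 : c \in \PP^1\}$ in $\PP^2$. From this I read off: the discriminant is $\Delta(\cW) = [X = 0] + 3\,[Y = 0]$, so $\f$ preserves each of the lines $\{X = 0\}$, $\{Y = 0\}$ individually (their multiplicities in $\Delta(\cW)$ differ); the unique base point of the pencil is $[1:0:0]$, where every leaf is tangent to $\{Y = 0\}$; and the two discriminant lines meet at $[0:0:1]$. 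Both special points are $\f$-fixed, and $\cW$ is irreducible (the monodromy around either discriminant line swaps the two local leaves). Theorem~B then supplies a Ueda map $U_g$ coming from a rational $g : \PP^1 \to \PP^1$ of degree $d$, to which $\f$ is semi-conjugated by a rational map.

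I would make this semi-conjugation explicit via the rational map $\pi : \PP^1 \times \PP^1 \dto \PP^2$ defined in the chart $Y = 1$ with $(U, V) = (Z, X)$ by
\[
\pi(x, y) = \bigl( \tfrac{x + y}{2},\, -\tfrac{(x - y)^2}{16} \bigr).
\]
One checks that $\pi$ is invariant under $(x, y) \sim (y, x)$, sends each fiber $\{x = c\}$ to the leaf $V = -(U + c)^2/4$ of $\cW$, and a homogeneous computation shows that $\pi$ contracts both lines $\{x = \infty\}$ and $\{y = \infty\}$ of $\PP^1 \times \PP^1$ to the single point $[1:0:0] \in \PP^2$. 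The lift of $\f$ through $\pi$ is then the product $\tilde\f = (g, g)$.

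The contradiction has two parts. (a) The map $g$ must satisfy $g(\infty) = \infty$, i.e., $g$ is polynomial in the affine chart: otherwise a finite pole $x_0$ of $g$ would force $\tilde\f$ to map $\{x = x_0\}$ into $\{x = \infty\}$, which $\pi$ contracts to $[1:0:0]$, so $\f$ would contract the leaf $\pi(\{x = x_0\})$ of $\cW$ to a point, impossible for an endomorphism. (b) For $g$ a polynomial of degree $d \ge 2$, writing $x, y$ as the roots of $t^2 - 2Ut + (U^2 + 4V) = 0$ and using Newton's identities, $U' = (g(x) + g(y))/2$ has total degree $d$ in $(U, V)$ while $V' = V \cdot h(U, V)^2$ with $h = (g(x) - g(y))/(x - y)$ has total degree $2d - 1$. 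Homogenizing, $\f$ takes the form
\[
[X : Y : Z] \mapsto [X \cdot H^2 : Y^{2d - 1} : Y^{d - 1} \cdot Q_d]
\]
for homogeneous polynomials $H, Q_d$ of degrees $d-1, d$. Computing $h(U, 0) = d\, a_d\, U^{d-1}$ (obtained by setting $x = y = U$ at $V = 0$, where $a_d$ is the leading coefficient of $g$) gives $H(X, 0, Z) = d\, a_d\, Z^{d-1}$, so $H$ has no factor of $Y$ and the three homogeneous coordinates share no common polynomial factor. Evaluating at $[0:0:1]$ yields $X' = Y' = Z' = 0$, an indeterminacy contradicting $\f$ being a morphism. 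The main obstacle is precisely this non-vanishing check $H(X, 0, Z) \not\equiv 0$, which rules out any common polynomial factor that might absorb the apparent indeterminacy and rescue $\f$.
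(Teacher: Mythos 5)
Your argument is correct in substance but takes a genuinely different route from the paper's. The paper's proof is a short local computation: writing $\cW$ in affine coordinates as $\omega=(x-yx^2)\,dy^2+2xy^2\,dx\,dy-y^3\,dx^2$ with discriminant $4xy^3$, it puts $\f$ in the normal form $(x,y)\mapsto(x,y^{\delta+1})$ at a generic point of any component $C$ of $\f^{-1}\{y=0\}$ and finds that the discriminant of $\f^*\cW=\cW$ has order at least $\delta+3$ along $C$; since $3$ is the largest multiplicity occurring in $4xy^3$, this forces $C=\{y=0\}$ and $\delta=0$, i.e.\ $\f^*\ell=\ell$, which is impossible for $\deg\f\ge2$. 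You instead make everything global and explicit: the dual pencil of conics, the $2{:}1$ parametrization $\pi$, the polynomiality of $g$, and the homogeneous form $[XH^2:Y^{2d-1}:Y^{d-1}Q_d]$ with $Y\nmid H$, whence degree $2d-1$ and an indeterminacy point at $[0:0:1]$. I checked your computations (the dual pencil, the fibers of $\pi$, the contraction of $\{x=\infty\}$, the degrees of $U'$ and $V'$, and $H(X,0,Z)=d\,a_dZ^{d-1}$) and they are right; the contraction argument forcing $g(\infty)=\infty$ is also sound. The one point to repair is the appeal to Theorem~B: that theorem carries the hypothesis that $\f$ preserves no other foliation or web, which is a standing assumption in the proof of Theorem~E but not a hypothesis of this (unconditional) lemma. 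It is not actually needed: since $\cW$ is an irreducible $2$-web whose graph is birational to $\PP^1\times\PP^1$ via $\pi$, Proposition~\ref{p:weblift} already shows that $\f$ lifts through $\pi$ to a rational map preserving or exchanging the two rulings, and equivariance with respect to the deck involution $(x,y)\mapsto(y,x)$ forces the lift to be $(g,g)$ up to composing with the swap. With that citation replaced, your proof stands; the side claims that the two discriminant lines are separately invariant and that $[1:0:0]$ and $[0:0:1]$ are fixed are never used and can be dropped. The trade-off between the two approaches: the paper's is shorter and purely local, while yours produces the explicit form any candidate endomorphism would have to take and locates the obstruction at the singular point of the discriminant.
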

\begin{proof}
In suitable affine coordinates the $2$-web $\cW$ is defined by the symmetric form
\[
 \omega  = x dy^2   - y (xdy - ydx)^2 = (x - yx^2 ) dy^2 + 2 xy^2 dx dy - y^3dx^2 \, .
\]
Its discriminant is therefore $ 4x^2 y^4 + 4 (x-yx^2)y^3 = 4 x y^3$. If $\f$ is an endomorphism preserving $\cW$ then the line $\ell = \{ y=0\}$ must be totally invariant 
and satisfy $\f^* \ell = \ell$ which contradicts our assumption that $\deg(\f)\ge2$.

Indeed, let $C$ be a curve such that $f(C) = \ell$,
and $\delta= \delta(C)$ be the vanishing order of the ramification divisor $\Delta_{\f}$ along $C$. We can choose coordinates
at a generic a point of $C$, keeping the coordinates at $\ell$, in such a way that $\f(x,y) = (x, y^{ \delta+1})$. This choice makes
evident that the vanishing order of $\f^* \omega$ along $C$ is exactly $2 \delta$, and that the discriminant of $\cW$ has order
at least $\delta + 3$ along $C$. This implies that $C= \ell$ so that $\ell$ is totally invariant by $\f$ and $\delta=0$ hence $\f^* \ell = \ell$.
\end{proof}

\begin{lemma}
Suppose $\f$  preserves the  $2$-web  $\cW$  dual to the foliation $[x^2 dy - y^2dx]$.
Then we are in case (v) of Theorem~\ref{TI:E}.
\end{lemma}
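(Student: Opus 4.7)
My plan is to realize $\cW$ as the push-forward of the two-ruling web on $\PP^1 \times \PP^1$ under an explicit $4$-to-$1$ quotient map, lift $\f$ through that quotient, and read off the form of the lift using equivariance. A direct computation via the duality recipe shows that in affine coordinates $\cW = [y\, dx^2 + x\, dy^2]$: a line $ax + by + 1 = 0$ through $(x_0, y_0)$ is tangent to the pencil determined by $\cF = [x^2 dy - y^2 dx]$ in $\check\PP^2$ precisely when $y_0\, b^2 + x_0\, a^2 = 0$, and eliminating $a,b$ using the parametrization of lines through $(x_0,y_0)$ yields this symmetric form. Its discriminant is supported on the lines $\{x=0\}, \{y=0\}, \{z=0\}$.

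Consider the rational map $\mu \colon \PP^1 \times \PP^1 \dto \PP^2$ given by $\mu(X,Y) = (-(X-Y)^2,\, (X+Y)^2)$. Its generic fibers are the orbits of $G = \langle \sigma_1, \sigma_2\rangle \cong (\Z/2)^2$ with $\sigma_1(X,Y) = (-X,-Y)$ and $\sigma_2(X,Y) = (Y,X)$, so $\mu$ is Galois of degree $4$. A direct pullback computation gives
\[
\mu^*\bigl(y\, dx^2 + x\, dy^2\bigr) \;=\; -16\,(X^2-Y^2)^2\, dX\, dY,
\]
so $\mu^*\cW = [dX] \boxtimes [dY]$, the superposition of the two rulings of $\PP^1\times\PP^1$.

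By Proposition~\ref{p:weblift} (or directly, using that $\cW$ is the push-forward under $\mu$ and $\f^*\cW = \cW$), some iterate $\f^N$ lifts to a rational map $\tilde\f \colon \PP^1 \times \PP^1 \dto \PP^1 \times \PP^1$ with $\mu\circ\tilde\f = \f^N\circ\mu$, and $\tilde\f$ normalizes $G$. Replacing $\f$ by a further iterate, $\tilde\f$ preserves each of the rulings $[dX], [dY]$ individually, so $\tilde\f(X, Y) = (f(X), g(Y))$ for rational maps $f, g$ on $\PP^1$. Equivariance with $\sigma_2$ forces $f = g$, because the only $\gamma \in G$ making $\tilde\f\circ\sigma_2 = \gamma\circ\tilde\f$ consistent with this diagonal form is $\gamma = \sigma_2$; equivariance with $\sigma_1$ forces $f$ to be odd (if $\gamma = \sigma_1$) or even (if $\gamma = \mathrm{id}$). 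Since $\mu$ contracts $\{X=\infty\} \cup \{Y=\infty\}$ onto a single point of $\PP^2$, the $\tilde\f$-invariance of this divisor forces $f(\infty) = \infty$, so $f$ is a polynomial. Finally, if $f$ were conjugate to a monomial or to a Chebyshev map, then $\tilde\f$ would preserve an additional pencil of foliations on $\PP^1 \times \PP^1$ (for instance $[dX/X + \la\, dY/Y]$ in the monomial case) whose push-forwards under $\mu$ would yield further $\f$-invariant webs on $\PP^2$, contradicting the standing hypothesis. This places $(\cW, \f)$ in case (v). The main technical step is the pullback identity above; once this is in hand the equivariance analysis is essentially automatic.
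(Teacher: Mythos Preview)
Your overall strategy coincides with the paper's: exhibit $\cW$ as the quotient of the ruling foliation on $\PP^1\times\PP^1$ by a $(\Z/2)^2$-action, lift $\f$ through that quotient, and deduce the normal form from equivariance. Your quotient map and pullback computation are the same as the paper's up to harmless sign conventions (the paper writes $\omega = x\,dy^2 - y\,dx^2$ and $\pi(x,y)=((x+y)^2,(x-y)^2)$).

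The substantive gap is the lifting step. Proposition~\ref{p:weblift} applied to a $2$-web produces a Galois cover of degree $2!=2$, namely the normalized graph $\mathbb W$ of $\cW$, \emph{not} your degree-$4$ cover $\mu\colon\PP^1\times\PP^1\to\PP^2$; so invoking that proposition does not put you on $\PP^1\times\PP^1$, and the parenthetical ``or directly'' is not an argument. The paper supplies exactly the missing idea in two moves. First, it identifies the intermediate quotient $X=\A^2/\langle (x,y)\mapsto(y,x)\rangle$ (a quadric cone) birationally with the graph $\mathbb W$; since $\f$ acts on $\PP(T\PP^2)$ and preserves $\mathbb W$, this gives a lift of $\f$ to $X$ for free. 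Second, it observes that $\A^2\setminus\{0\}\to X\setminus\{\text{vertex}\}$ is the \emph{universal} cover, so the self-map of $X$ lifts further to $\A^2\setminus\{0\}$ by simple connectedness, and then to $\A^2$ by Hartogs. This topological step is what your argument lacks.

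A related defect: you only lift an iterate $\f^N$, and then replace $\f$ by a further iterate so that $\tilde\f$ preserves each ruling separately. But case~(v) of Theorem~\ref{TI:E} asserts that $\f$ itself is the quotient of a map of the form $(f(x),f(y))$. The paper's universal-cover argument lifts $\f$ directly to a polynomial endomorphism $\hat\f$ of $\A^2$ that commutes with the full group $\langle i_1,i_2\rangle$, from which the diagonal form with $f$ odd or even follows without passing to iterates.
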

\begin{proof}
In suitable affine coordinates $\cW$ is defined by the symmetric form
$\omega = x dy^2 - y dx^2.$

Notice that the endomorphism 
$\pi(x,y) = ((x+y)^2, (x-y)^2)$ realizes the affine space $\A^2$ as the quotient of $\A^2$ by the
the order $4$ group generated by the involutions  $i_1 (x,y)= (y,x)$ and $i_2(x,y)=(-x,-y)$. Since
\[
\pi^* \omega = -8 (x+y)^2(x-y)^2 dx dy \, ,
\]
it follows that $\cW$ is the image of $[dx]$ by $\pi$.

Consider now the quadric cone $X:= \A^2 / \langle i_1 \rangle$ and the natural map
$\varpi: X \to \A^2$ induced by $\pi$ which is a $2:1$ cover.  Since $[dx]$ is preserved by $i_1$, it induces a foliation $\cF_0$ on $X$ which is mapped to $\cW$ under $\varpi$ but is not preserved by the involution determined by $\varpi$.

It follows that there is a canonical birational proper map from the closure of the graph of $\cW$ in the projectivized tangent space of $\A^2$ onto $X$. This map is actually an isomorphism
over the set of regular points $X_0$ of $X$ (the singular set $X\setminus X_0$ consists of a single point).  It follows that $\f$ lifts to an endomorphism of $X$ whose second iterate preserves $\cF_0$.

Now the map $\A^2  \setminus \{ (0,0)\} \to X\setminus X_0$ is a universal cover hence
$\f$ lifts to $\A^2 \setminus \{ (0,0)\}$ and thus to $\A^2$ as a polynomial endomorphism
$\hat{\f}$ that preserves the foliation $[dx]$ and commutes with the involution $i_1$ and with the group   generated by $i_1$ and $i_2$.

It is then not difficult to see that one can assume that the lift has the following form
$\hat{\f} (x,y) = (f(x), f(y))$ with $f$ an odd (or even) polynomial. 
\end{proof}

\begin{lemma}
Suppose $\f$  preserves the  $2$-web  $\cW$  dual to the foliation $\cF_2=[x(x-1) dy - y(y-1)dx]$.
Then we are in case (vi) of Theorem~\ref{TI:E}.
\end{lemma}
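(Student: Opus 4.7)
The plan is to mirror the strategy of case~(v), now using a larger Galois group. First I would write down an explicit $2$-symmetric form $\omega$ defining $\cW$ starting from the dual foliation $\cF_2$, identify its discriminant, and construct an $8$-to-$1$ Galois cover $\pi: \PP^1\times\PP^1\to\PP^2$ realizing $\PP^2$ as the quotient by the group $G = (\Z/2)^3$ generated by the three involutions $\sigma_1(x,y)=(y,x)$, $\sigma_2(x,y)=(x^{-1},y^{-1})$, $\sigma_3(x,y)=(-x,-y)$ appearing in case~(vi). Writing $s=x+x^{-1}$ and $t=y+y^{-1}$, the ring $\C[x,y]^G$ is generated by $st$ and $s^2+t^2$, giving an explicit realization of $\pi$. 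A direct computation then yields
\[
\pi^*\omega \;=\; u(x,y)\,dx\, dy
\]
for some nowhere vanishing rational function $u$, so that $\pi^*\cW = [dx]\boxtimes[dy]$ and in particular $\cW = \pi_*[dx]$, as required by the statement of case~(vi).

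Second, I would lift $\f$ to $\PP^1\times\PP^1$ in two stages. Let $H = \langle \sigma_2,\sigma_3\rangle$ be the order-$4$ subgroup that stabilizes each of $[dx]$ and $[dy]$; the quotient $Y := (\PP^1\times\PP^1)/H$ is canonically birational to the graph $\mathbb W$ of $\cW$ in $\PP(T\PP^2)$, the residual involution $\sigma_1$ acts on $Y$ giving the double cover $Y \to \PP^2 = Y/\langle\sigma_1\rangle$, and the two foliations $[dx]$, $[dy]$ descend to foliations $\cG_1,\cG_2$ on $Y$. Since $\f$ preserves $\cW$ it lifts to a rational self-map $\tilde\f$ of $Y$, and after replacing $\f$ by $\f^2$ one may assume $\tilde\f$ preserves each $\cG_i$. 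The quotient $\PP^1\times\PP^1\to Y$ is \'etale over the complement of the $12$ points fixed by a non-trivial element of $H$, and the complement of this finite set in the simply connected surface $\PP^1\times\PP^1$ is still simply connected. Exactly as in the proof of case~(v), $\tilde\f$ therefore lifts through this cover, and the lift extends across the twelve missing points by Hartogs's theorem to give an endomorphism $\hat\f$ of $\PP^1\times\PP^1$ that is $H$-equivariant.

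Preservation of $[dx]$ forces $\hat\f(x,y) = (f(x),g(x,y))$, and preservation of $[dy]$ then forces $g(x,y)=g(y)$. The $\sigma_1$-equivariance of the lift, which holds up to composition with an element of $H$ and after a further iterate of $\f$, then forces $f=g$, so $\hat\f$ takes the form $(f(x),f(y))$. The commutation of $\hat\f$ with $\sigma_2$ and $\sigma_3$ is equivalent to the commutation of $f$ with the two involutions $x\mapsto x^{-1}$ and $x\mapsto -x$ of $\PP^1$. Finally, if $f$ were a finite quotient of an affine map in the sense of \cite{milnor}, then $(f,f)$ would preserve a positive-dimensional family of foliations on $\PP^1\times\PP^1$, and their images under $\pi$ would form a non-trivial family of $\f$-invariant webs on $\PP^2$, contradicting the hypothesis that $\cW$ is the only $\f$-invariant web.

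The main obstacle is to rigorously control the lifts at the different stages: one needs to ensure that $\hat\f$ extends holomorphically through the ramification locus, and that the several equivariances (under $\sigma_1$, $\sigma_2$ and $\sigma_3$) can be arranged simultaneously rather than only up to distinct powers of $\f$. Both points are handled as in the previous lemma, by combining the normality of the intermediate quotients with the finiteness of $\pi$ and a Hartogs-type extension argument; once regularity is established the algebraic verifications become formal.
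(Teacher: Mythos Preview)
Your proposal is correct and follows essentially the same route as the paper's proof: both realize $\PP^2$ as $(\PP^1\times\PP^1)/G$ for the same group $G=(\Z/2)^3$, identify the intermediate quotient by your $H$ (the paper's $G_0=\langle i_2,i_3\rangle$) birationally with the graph $\mathbb W$ of $\cW$, and lift $\f$ through this quotient and then through the simply connected cover to an endomorphism of $\PP^1\times\PP^1$. The only differences are cosmetic---your invariants $st,\ s^2+t^2$ versus the paper's $xy+\tfrac{1}{xy},\ \tfrac{x}{y}+\tfrac{y}{x}$---together with the fact that where you pass to iterates of $\f$, the paper lifts $\f$ itself and absorbs any swap of $[dx]$ and $[dy]$ by composing the lift with an element of $G$; you should do the same so that the conclusion holds for $\f$ rather than only for $\f^N$.
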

\begin{proof}
The $2$-web is given in affine coordinates by
$$
\om = 
x (x-1) dx^2 + (2xy) dx dy - y (y-1) dy^2~.
$$
Let $G$ be the group generated by the three involutions  $i_1 (x,y)= (y,x)$,  $i_2(x,y)=(-x,-y)$, and $i_3(x,y)=(x^{-1},y^{-1})$, and $G_0$ be the
normal subgroup generated by $i_2$ and  $i_3$. 

The quotient space $\PP^1 \times \PP^1  / G$ is isomorphic to $\PP^2$ and the quotient map 
$\pi : \PP^1 \times \PP^1  \to \PP^2$ is given in suitable affine coordinates by 
$(x,y) \mapsto (xy+\frac1{xy},\frac{x}{y}+ \frac{y}{x})$, and a fastidious computation shows that 
$\cW$ is the image of $[dx]$ under $\pi$.

Consider  the quotient space $X:= \PP^1 \times \PP^1  / G_0$.
Arguing as in  the previous lemma we see that $X$ is birationally equivalent to graph of $\cW$ hence that $\f$ lifts to $X$ and then to $\PP^1 \times \PP^1$ as an endomorphism. 
The claim follows.
\end{proof}

\section{Smooth quotients of abelian surfaces}\label{S:exam}

\subsection{Quotients of abelian surfaces isomorphic to $\mathbb P^2$} Table~\ref{T2} describes the $2$-tori that may appear in Case~(2) of Theorem~\ref{TI:C}. It is extracted from~\cite{KTY,toku-yoshida}, see also~\cite{dupont}.
The number $\tau$ is an arbitrary complex number with positive imaginary part, $L(\tau) = \Z + \tau \Z$, $i = \sqrt{-1}$, $\zeta = \exp(2i\pi/6)$, and
$G(m,p,2)$ is the reflection group of order $2m^2/p$ generated by the following three matrices:
$$G(m,p,2) =
\left\langle \left[\begin{array}{cc}
0 & 1\\ 1 & 0
 \end{array}\right]
, \left[\begin{array}{cc}
0 & e^{\frac{2i\pi}{m}}\\ e^{-\frac{2i\pi}{m}} & 0
 \end{array}\right]
,
\left[\begin{array}{cc}
  e^{\frac{2ip\pi}{m}} & 0 \\ 0& 1
 \end{array}\right] \right\rangle
~$$
In both cases, the last column refers to the possible integers $k\ge 2$ for which a map preserves an irreducible $k$-web.

\begin{table}[!h]
\begin{center}
\begin{tabular}{|c|c|c|c|}
\hline
Lattice & Group $G$ & $|G|$ &  Irreducible $k$-web \\
\hline
& & & \\
$L(\tau)\left(\begin{array}{c} 1\\0 \end{array}\right) +
 L(\tau)\left(\begin{array}{c} 0\\1 \end{array}\right)$
& $G(2,1,2)$ &  $8$ & $k \in \{  2, 4\} $\\
& & & \\
\hline
& & & \\
$L(i)\left(\begin{array}{c} 1\\0 \end{array}\right) +
 L(i)\left(\begin{array}{c} 0\\1 \end{array}\right)$
& $G(4,1,2)$ & $32$ & $  k \in  \{ 2, 4 , 8\} $\\
& & & \\
\hline
& & & \\
$L(\zeta)\left(\begin{array}{c} 1\\0 \end{array}\right) +
 L(\zeta)\left(\begin{array}{c} 0\\1 \end{array}\right)$
& $G(3,1,2)$ &  $18$ & $ k \in \{ 2,3,6 \} $\\
& & & \\
\hline
& & & \\
$L(\zeta)\left(\begin{array}{c} 1\\0 \end{array}\right) +
 L(\zeta)\left(\begin{array}{c} 0\\1 \end{array}\right)$
& $G(6,1,2)$ & $72$ & $k  \in \{ 2, 3, 6, 12\}$\\
& & & \\
\hline
& & & \\
$L(i)\left(\begin{array}{c} 1\\0 \end{array}\right) +
 L(i)\left(\begin{array}{c} 0\\1 \end{array}\right)$ &
$\langle G(4,2,2),
\frac{1+i}{2} \left(\begin{smallmatrix}
1\\ 1
 \end{smallmatrix}\right)\rangle$ &  $32$ & $ k \in \{ 2,  4 \} $\\
& & & \\
\hline
& & & \\
$L(\tau)\left(\begin{array}{c} 1\\-1 \end{array}\right) +
 L(\tau)\left(\begin{array}{c} \zeta\\\zeta^2 \end{array}\right)$
& $G(3,3,2)$ & $6$ & $k \in \{ 2, 3, 6 \}$\\
& & & \\
\hline
\end{tabular}
\end{center}
\caption{List of $2$-tori yielding an endomorphism of $\PP^2$ preserving a $k$-web, $k\ge 3$, see Theorem~\ref{TI:C}~item~(1).} \label{T2}
 \end{table}

\subsection{An example of an invariant $12$-web} We give an explicit example of an endomorphism of $\PP^2$ of degree $3$ preserving an irreducible $12$-web. Take the endomorphism $(x,y) \mapsto (\sqrt{3}\zeta^{-1} x, \sqrt{3}\zeta^{-1}y)$
on the $2$-torus $T\= (\C/\Z[\zeta])^2$. It preserves any linear foliation given by $dx + \la dy$ with $\la \in \C$.

We then take the quotient of $T$ by the subgroup $G_0$ of index $2$ of $G(6,1,2)$ generated by
$\left[\begin{smallmatrix}
  e^{\frac{i\pi}{3}} & 0 \\ 0& 1
 \end{smallmatrix}\right]$
and $\left[\begin{smallmatrix}
1 & 0 \\ 0& e^{\frac{i\pi}{3}}
 \end{smallmatrix}\right]$. The quotient is  isomorphic to $\mathbb P^1 \times \mathbb P^1$, and the isomorphism $T/G_0\simeq \PP^1 \times \mathbb P^1$ is given explicitly
  by the map $$(x,y)
\mapsto \left(
(\mathfrak{p}'(x))^2, (\mathfrak{p}'(y))^2\right) \text{ with } \mathfrak{p} = \frac1{x^2} + \sum_{\Z[\zeta]\setminus\{0\}} \frac{1}{(x-\tau)^2} - \frac1{\tau^2}$$ being the Weierstrass $\mathfrak{p}$-function. Using the fact that $\mathfrak{p}$  satisfies the differential equation $(\mathfrak{p}')^2 = 4\mathfrak{p}^3 -1$
 we obtain that the map induced on $\mathbb P^1 \times \mathbb P^1$ is equal to
$$
(x,y) \mapsto \left(\frac{-x}{3^3}\, \left( \frac{x+9}{x+1}\right)^2, \frac{-y}{3^3}\, \left( \frac{y+9}{y+1}\right)^2\right)
$$
and that it preserves the $6$-webs given by
$$
\frac{dx^6}{x^3(1+x)^4}+ \frac{\la\,dy^6}{y^3(1+y)^4}~.
$$
where $\lambda \in \mathbb C$ is arbitrary. This web is irreducible as soon as $\la^6 \neq 1$.

\smallskip

Taking then the quotient of $T$ by $G(6,1,2)$, we get that the map
\begin{multline*}
 (x,y) \mapsto
\left( - \, \frac{y^2x + 14\,y^2 +126\, y+113\, xy+81x +18\,x^2 +x^3 +2 yx^2}{3^3(x+y+1)^2}, \right. \\
\left. \frac{y}{3^6} \, \left( \frac{9x+y+9^2}{x+y+1}\right)^2 \right)
\end{multline*}
preserves an irreducible $12$-web. 
It is possible to write down explicit formulas for the  invariant irreducible $12$-webs
with the help of a computer algebra system but the expressions we obtained 
are too messy to be included here.

\end{document}